\documentclass[12pt,a4paper]{amsart}
\usepackage[english]{babel}
\usepackage[left=2cm,right=2cm,top=2cm,bottom=2cm]{geometry}
\usepackage[numbers,sort&compress]{natbib} 
\usepackage{enumitem}
\usepackage{hyperref}
\usepackage{mathtools}
\usepackage{mathrsfs}
\usepackage{esint}
\usepackage{cases}
\usepackage{xcolor}
\usepackage{comment}

\theoremstyle{definition}
\newtheorem{defi}{Definition}[section]
\newtheorem{rem}[defi]{Remark}

\theoremstyle{plain}
\newtheorem{teo}[defi]{Theorem}
\newtheorem{lem}[defi]{Lemma}

\newcommand{\N}{\mathbb{N}}
\newcommand{\Z}{\mathbb{Z}}

\newcommand{\W}{\mathbb{W}}
\newcommand{\vp}{\varphi}
\newcommand{\R}{\mathbb{R}}
\newcommand{\pp}{\partial}
\newcommand{\se}{\subseteq}
\newcommand{\ceq}{\coloneqq}

\newcommand{\res} {\mathop{\hbox{\vrule height 7pt width .5pt depth 0pt \vrule height .5pt width 6pt depth 0pt}}\nolimits}
\newcommand{\leb}{\mathcal{L}}
\newcommand{\sbv}{\operatorname{SBV}}
\newcommand{\bv}{\operatorname{BV}}
\newcommand{\J}{\mathcal{J}}
\newcommand{\hau}{\mathcal{H}}
\newcommand{\nl}{\left\|}
\newcommand{\nr}{\right\|}
\newcommand{\loc}{\operatorname{loc}}
\newcommand{\ap}{{\operatorname{ap}}}
\newcommand{\s}{\mathcal{S}}

\renewcommand{\H}{\mathbb{H}}
\newcommand{\ve}{\varepsilon}
\newcommand{\Id}{\operatorname{Id}}

\newcommand{\C}{\mathcal {C}}

\newcommand{\Leb}{\leb^{2n+1}}
\newcommand{\Hn}{\H^n}
\newcommand{\diam}{\operatorname{diam}}
\newcommand{\dist}{\operatorname{dist}}
\newcommand{\Q}{\mathcal{Q}}

\subjclass[2020]{26B30, 53C17, 49Q15, 28A75.}

\title[A BMO formula for SBV functions in Heisenberg groups]{A BMO-type representation formula for the total variation of SBV functions in Heisenberg groups}

\keywords{Special functions with bounded variation, bounded mean oscillation, Carnot-Carathéodory spaces, sub-Riemannian geometry, rectifiable sets.}

\thanks{M.D.M. acknowledges the support of the Swiss National Science Foundation (SNSF) Starting Grant \emph{Challenges and Breakthroughs in the Mathematics of Plasmas}, TMSGI2\textunderscore226018. D.V. was  supported by the University of Padova. M.D.M. and D.V. are members of GNAMPA of INdAM; in particular, D.V. receives funding from the INdAM project VAC\&GMT as well as from the INdAM-GNAMPA 2026 Project Variational, Geometric, and Analytic Perspectives on Regularity, CUP E53C25002010001.}

\begin{document}

\author[M. Di Marco]{Marco Di Marco}
\address[M. Di Marco]{ETH Z\"urich, Department of Mathematics, R\"amistrasse 101, 8092 Z\"urich, Swit\-zer\-land.}

\email{mdimarco@ethz.ch}

\author[S. Don]{Sebastiano Don}
\address[S. Don]{Dipartimento di Ingegneria Civile, Architettura, Territorio, Ambiente e di Matematica (DICATAM), via Branze, 38, 25123 Brescia, Italy}
\email{sebastiano.don@unibs.it}

\author[D. Vittone]{Davide Vittone}
\address[D. Vittone]{Dipartimento di Matematica ``T. Levi-Civita'', Università di Padova, via Trieste 63, 35121 Padova, Italy.}
\email{davide.vittone@unipd.it}

\begin{abstract}
We prove a formula for the total variation of SBV functions in sub-Riemannian Heisenberg groups arising as the limit of certain BMO-type functionals.
\end{abstract}

\maketitle

\section{Introduction}
A few years ago L.~Ambrosio, J.~Bourgain, H.~Brezis and A.~Figalli~\cite{abbf16} introduced the functionals 
\[
\kappa_r(u) \ceq r^{n-1} \sup_{\mathcal G_r}\ \sum_{\Q\in\mathcal G_r}\fint_\Q \left|u(x)-\fint_\Q u\right|dx,\qquad u\in L^1(\R^n),
\]
where  $r>0$ is fixed and the supremum is taken among all families $\mathcal G_r$ of disjoint open cubes $\Q$ of side length $r$ and arbitrary orientation. The functionals $\kappa_r$ are   inspired by the space $\operatorname{BMO}$ of functions with bounded mean oscillation, see also~\cite{BBM,Davila_CalcVar2002,PonceSpector_NLA2017,fms18}. It was conjectured in~\cite{abbf16} that, if $u\in \sbv_{\loc}(\R^n)$ belongs to the (local) class of {\em special functions with bounded variation}, then\footnote{We denote by $|\mu|$ the total variation of a vector-valued Radon measure $\mu$.}
\begin{equation}\label{eq_limkaepSBV}
\lim_{r\to0^+} \kappa_r(u)=\frac14\int_{\R^n}|D^{\ap} u|\,d\leb^n + \frac 12 |D^j u|(\R^n).
\end{equation}
Recall (see e.g.~\cite{afp}) that for $u\in \sbv_{\loc}(\R^n)$ the derivative $Du$ in the sense of distributions is a vector-valued measure that can be decomposed as $Du=D^{\ap} u\leb^n+D^ju$, where $D^{\ap} u\in L^1_{\loc}(\R^n)$ is the {\em approximate gradient} of $u$ and  the {\em singular part} $D^ju$  (with respect to $\leb^n$)  of $Du$ is concentrated on the jump set $\J_u$, which is a countably $(n-1)$-rectifiable set. 
The equality~\eqref{eq_limkaepSBV} was proved in~\cite{dpfp} by exploiting the outcomes of~\cite{fms16} and some new approximation (with respect to the $\bv$ norm) results for $\sbv$ functions. A $\Gamma$-convergence approach was developed in~\cite{ARBDN_Indiana,ln24}.  Formula~\eqref{eq_limkaepSBV} reveals one of the most interesting features of the functionals $\kappa_r$, as it provides a formula for the total variation of $u$ that is independent of the theory of distributions: remarkably, no derivatives of $u$ are involved -- just integrals.

In this paper we develop a similar program in the genuinely non-Euclidean setting\footnote{See also the related works \cite{lpz24,scz26}.} of the sub-Riemannian Heisenberg groups $\H^n$. Recall that the $n$-th Heisenberg group $\H^n\equiv \R^{2n+1}$ is a connected and simply connected Lie group whose Lie algebra $\mathfrak h^n$ is 2-step nilpotent and admits the stratification $\mathfrak h^n=\mathfrak h^n_1 \oplus \mathfrak h^n_2$. The directions in $\mathfrak h^n_1$ are called {\em horizontal}; their Lie brackets generate the Lie algebra and determine the sub-Riemannian structure of $\H^n$, see Section~\ref{subsecHeis}.

The space $\bv_\H$ of functions with horizontal bounded variation in $\H^n$ is introduced (\cite{cdg,fssc}) as the space of $L^1$ functions $u:\H^n\to\R$ whose horizontal derivatives, in the sense of distributions, are represented by an $\R^{2n}$-vector valued Radon measure $D_\H u$. It is known (see~\cite{dv,sy03}) that $D_\H u$ admits the decomposition 
\[
D_\H u=D_\H^{\ap}u\Leb+D_\H^cu+D_\H^ju,
\]
where $D_\H^{\ap}u$ is the {\em approximate $\H$-gradient} of $u$, $D_\H^cu$ is called {\em Cantor part} of $D_\H u$, and $D_\H^j u$ is the {\em jump part} of $D_\H u$; recall that $\Leb$ is a Haar measure on $\H^n$ when the latter is identified with $\R^{2n+1}$ by standard exponential coordinates. 

In Section~\ref{subsec_cubi} we introduce, for every side parameter $r>0$, some natural families  of {\em Heisenberg $r$-cubes}; here, the adjective {\em natural} refers to the fact that they behave well with respect to left-translations, homogeneous dilations, and  isometries such as Heisenberg rotations. In the spirit of~\cite{abbf16}, given an open subset $\Omega\se\H^n$, a function $u\in L^1(\Omega)$ and $r>0$, it is then natural to consider the functional
  \[
    k_r(u) \ceq r^{2n+1} \sup_{\mathcal G_r}\   \sum_{\Q\in \mathcal G_r} \fint_{\Q} \left| u(x) -\fint_{\Q}  u\right|dx,
    \]
where $\mathcal G_r$ varies among all the possible families of pairwise disjoint Heisenberg $r$-cubes  contained in $\Omega$. We can now state our main result.

\begin{teo}\label{teo_intromain}
Let $\Omega \se \H^n$ be an open set and $u \in \sbv_\H(\Omega)$. Then
\[
\lim_{r \to 0} k_r(u) = \frac{1}{4} \int_\Omega |D^\ap_\H u|d\leb^{2n+1}+\frac{1}{2}|D_\H^ju|(\J_u).
\]
\end{teo}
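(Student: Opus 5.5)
We prove the limit through two matching inequalities, a $\liminf$ lower bound and a $\limsup$ upper bound, and in each we treat the absolutely continuous part and the jump part separately. Both rest on a blow-up analysis at points of $\Omega$ adapted to the Heisenberg structure. The normalization $r^{2n+1}$ divided by the cube volume (of order $r^{2n+2}$, since the homogeneous dimension is $2n+2$) gives a factor $r^{-1}$. This matches the first-order scaling of $D_\H u$ under the dilations $\delta_\lambda$, so each cube contributes roughly $|D_\H u|(\Q)$ times a constant depending on the local profile.

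\textbf{Lower bound.} Fix a point $p\in\Omega$ outside $\J_u$ that is a Lebesgue point of $D^\ap_\H u$, where $u$ is approximately $\H$-differentiable. Rescaling $u_{p,r}(y)=(u(p\cdot\delta_r y)-u(p))/r$ gives convergence in $L^1_{\loc}$ to the horizontal linear map $y\mapsto\langle D^\ap_\H u(p),\pi_H y\rangle$. For an $r$-cube rotated (by a Heisenberg rotation) so that one horizontal face is orthogonal to $D^\ap_\H u(p)$, the normalized oscillation then converges to $|D^\ap_\H u(p)|$ times the one-dimensional mean oscillation of a linear function on $[0,1]$, which is $1/4$. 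At a jump point $p\in\J_u$ we use the rectifiability of $\J_u$ (countable $\C^1_\H$-rectifiability from~\cite{fssc}, \cite{dv}) and blow up to $u^+\chi_{\{\langle\nu,\pi_H y\rangle>0\}}+u^-\chi_{\{\cdots<0\}}$. A cube bisected by the vertical hyperplane orthogonal to $\nu$ gives oscillation $\tfrac12|u^+-u^-|$. To globalize, we pick a fine disjoint family of such optimally oriented cubes and use a Vitali-type covering argument. The covering works because Heisenberg $r$-cubes are left translates and dilates of finitely many shapes, and the families from Section~\ref{subsec_cubi} are invariant under rotations. We cover $|D^\ap_\H u|\Leb$-almost all of $\Omega$ away from $\J_u$ and $\hau^{2n+1}$-almost all of $\J_u$, choosing different scales on compact pieces of good points via Egorov and Lusin arguments. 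This yields $\liminf k_r(u)\ge$ the right-hand side. Since $k_r$ is taken with a fixed $r$, the covering has to be done at a single scale. I would therefore use a grid of cubes: translate a rotated grid on each good compact set and average over translations, so that a positive proportion of the points are good.

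\textbf{Upper bound.} This is the main obstacle, as in the Euclidean case~\cite{dpfp}. A direct Poincaré inequality on each cube only gives $\fint_\Q|u-u_\Q|\le C r^{-(2n+1)}|D_\H u|(\Q)$ with a non-sharp constant. I would follow the strategy of~\cite{dpfp}. First, establish upper semicontinuity of $\limsup k_r$ with respect to strict $\bv_\H$ convergence, or at least show that $\limsup_r k_r(u)\le\limsup_r k_r(v)+C|D_\H(u-v)|(\Omega)$ using the Poincaré inequality on Heisenberg cubes. Second, approximate $u\in\sbv_\H$ in $\bv_\H$ norm by functions $v$ that are smooth away from a finite union of compact $C^1_\H$ hypersurface pieces, with $v$ having a $C^1$-regular jump there. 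Constructing this approximation in $\H^n$ is delicate, because $\J_u$ is only $\H$-rectifiable, so we must use intrinsic graphs and an implicit function theorem. For such regular $v$, we bound each cube's oscillation by a sharp one-dimensional estimate. For a smooth $v$ on a small cube, Taylor expansion reduces the oscillation to that of a linear horizontal function, whose worst orientation gives at most $\tfrac14|\nabla_\H v|$ per unit volume. This needs a sharp lemma: the mean oscillation of a linear function on a Heisenberg unit cube is at most $\tfrac14$ times its horizontal gradient norm. For cubes meeting the jump surface, the oscillation is at most $\tfrac12|v^+-v^-|$ times the area fraction, plus a small error. The cubes hitting a neighbourhood of the surface number about $\hau^{2n+1}(\J_v)r^{-(2n+1)}$ at most, up to a factor close to $1$ after the sharp counting. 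The sharp Euclidean jump estimate of~\cite{fms16} then has to be transplanted to Heisenberg cubes cut by a vertical plane. I expect this to be the hardest point, alongside the approximation theorem.
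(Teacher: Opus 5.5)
Your upper-bound plan is essentially the paper's: a Poincar\'e estimate on cubes giving $|k_r(u)-k_r(v)|\le C_P|D_\H(u-v)|(\Omega)$, approximation by good $\sbv_\H$ functions from~\cite{sbvx}, Taylor expansion plus the sharp bound $\int_\C|\langle\xi,Uz\rangle|\,dz\,dt\le\frac14|\xi|$, and a sharp isoperimetric bound on cubes meeting the jump. Your lower bound for the absolutely continuous part also matches: rotated pavings $\mathcal P_{r,U_j}$ on regions where the gradient direction is almost constant. That Poincar\'e estimate is two-sided, so the lower bound can also be proved only for good functions; this is what the paper does, and it spares you the Egorov/Lusin blow-up machinery.

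The genuine gap is the lower bound for the jump part at a single scale $r$. To get the constant $\frac12$, almost all of $|D^j_\H u|$ must be picked up by pairwise disjoint $r$-cubes that are asymptotically bisected by $\J_u$, i.e.\ centred on it. A rotated and translated grid in $\H^n$ cannot achieve this. A tile crossed by the blown-up vertical jump hyperplane at normalized offset $\tau$ from its centre contributes only $\frac12(1-4\tau^2)|u^+-u^-|$ per unit area; this is~\eqref{eq_eqaux}. For a curved jump surface no translation makes all offsets small, and averaging over translations only guarantees the mean value $\int_{-1/2}^{1/2}\frac12(1-4\tau^2)\,d\tau=\frac13$. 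A Vitali-type selection at a fixed scale does not help either, since it only guarantees a dimensional fraction of the measure. ``A positive proportion of good points'' is far from the $(1-\sigma)$-fraction you need.

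The missing ingredient is a nearly optimal single-scale packing of disjoint cubes centred on $\J_u$, which is Lemma~\ref{lem_comb}\,(viii) in the paper. On a piece $K_i\subset\Phi_{U_i}(\Gamma_i)$ with $|U_i^*\nu_u-e_1|<\sigma$, the centres are $F_i(y)$, with $y$ ranging over the anisotropic lattice $\theta+(\lambda r\Z^{2n-1}\times\lambda^2r^2\Z)$ in the parameter plane $\W$. Here $\lambda>1$ is close to $1$, depending on $\sigma$.

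Disjointness of the cubes $\C_{r,U_i}(F_i(y))$ comes from writing out the group law. The horizontal spacing $\lambda r>r$ forces equal $\widehat z$-components. Then the little $\frac12$-H\"older continuity of $\varphi_i$ in the vertical variable bounds $|\tau_i-\tau_j|$ by $\frac{2+\beta}{2-\beta}r^2<\lambda^2r^2$; this is where the Heisenberg structure genuinely enters.

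Fubini over $\theta\in P_{\lambda r}$ then selects a translation with $\sum_h|u^+-u^-|(p_h)\,r^{2n+1}\ge\lambda^{-(2n+1)}\int_{A_K}J\circ F\,d\leb_\W$. The area formula for intrinsic graphs turns the right-hand side into at least $(1-\sigma)|D^j_\H u|(K_i)$. Each such cube contributes at least $\frac12|u^+-u^-|(p_h)-\alpha(r)$, by comparing the means on its two halves.

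Two smaller points on your upper bound. First, no sharp counting of cubes near $\J_v$ is needed, and a count with factor close to $1$ fails in general, since tilted cubes can barely touch the surface from both sides. What is used instead is the per-cube estimate $r^{2n+1}\fint_\Q|u-(u)_\Q|\le\frac12|D^j_\H u|(\J_u\cap\Q)+\ve r^{2n+1}$. It holds uniformly over all positions and rotations of cubes meeting $K$, by blow-up and compactness. Only a crude bound $\#\le C_Kr^{-(2n+1)}$ is then needed to absorb $\ve$. Second, the transplanted jump estimate is not the hard part. A vertical half-space cuts the box $\C$ in a cylinder, so $P_\H(E;\C)=P(\pi_H E;\C_H)$, and the Euclidean sharp relative isoperimetric inequality in $\C_H$ applies directly (Lemma~\ref{lem_salti}).
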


The proof of Theorem~\ref{teo_intromain}, which occupies Section~\ref{sec_dimostrazione}, follows the strategy outlined in~\cite{fms16,dpfp}
and  is based on the results of the previous paper~\cite{sbvx} of the authors. More precisely, the proof  relies crucially on~\cite[Theorem~1.4]{sbvx} which states that any $\sbv_\H$ function can be approximated {\em with respect to the $\bv_\H$ norm} by a sequence of {\em good} $\sbv_\H $ functions, i.e. (see Definition~\ref{def_goodsbv}) by $\sbv_\H$ functions $u$ such that (i) the jump set $\J_u$ is compact and contained in an intrinsic $C^1$ hypersurface, and (ii) $u$ is smooth on the complement of $\J_u$. Using this fact, one can easily prove (Theorem~\ref{teo_bastanolegood}) that the problem reduces to proving Theorem~\ref{teo_intromain} for good $\sbv_\H$ functions only:  eventually, for doing so one needs to make use of certain fine properties of good $\sbv_\H$ functions that are stated and proved in the pivotal Lemma~\ref{lem_comb}.
\medskip

\textbf{Structure of the paper} In Section~\ref{sec_preliminari} we introduce all the needed preliminary facts about Heisenberg groups, $\bv_\H$ functions and Heisenberg cubes. Section~\ref{sec_goodSBV} is devoted to good $\sbv_\H$ functions and, in particular, to the proof of Lemma~\ref{lem_comb}. Section~\ref{sec_dimostrazione} contains the proof of Theorem~\ref{teo_intromain}.
\medskip

\textbf{Statement of AI use.} The authors used AI tools for minor copyediting assistance during the final revision of this paper. The manuscript and all mathematical content were conceived and written entirely by the authors.

\section{Notation and preliminary results}\label{sec_preliminari}

In this section we introduce Heisenberg groups, $\bv_\H$ functions and the relevant notation. We also introduce Heisenberg cubes and some properties.

\subsection{Heisenberg groups}\label{subsecHeis}
\begin{defi}
For $n\geq 1$ we denote by $\H^n$ the $n$-th {\em Heisenberg group}, identified with $\R^{2n+1}$ through exponential coordinates. We denote a point $\H^n \ni p=(z,t)$ by $z \in \R^{2n}$ and $t \in \R$. If $p=(z,t),q=(z',t') \in \H^n$, the group operation is defined as 
\[
p \cdot q \ceq (z+z',t+t'+2\omega(z,z')),
\qquad
\omega(z,z') \ceq \sum_{j=1}^n(x'_jy_j-x_jy'_j),
\]
where $z=(x,y), z'=(x',y')$ with $x,y,x',y' \in \R^n$. If $\H^n \ni p=(z,t)$, its inverse is $p^{-1}=(-z,-t)$ and $0=(0,0)\in \H^n$ is the identity of $\H^n$. For $\lambda >0$, we denote by $\delta_\lambda\colon\H^n\to\H^n$ the dilations of the Heisenberg groups defined, for $p=(z,t)\in\H^n$, by $\delta_\lambda(z,t) \ceq (\lambda z, \lambda^2 t)$. Observe that dilations form a one-parameter family of group isomorphisms.

The {\em homogeneous dimension} of $\H^n$ is $2n+2$. The Lebesgue measure $\mathcal L^{2n+1}$ is the Haar measure on $\H^n\equiv\R^{2n+1}$ and it is $(2n+2)$-homogeneous with respect to dilations. The $(2n+1)$-dimensional Lie algebra $\mathfrak{h}^n$ of left invariant vector fields is generated by the vector fields $X_1,\dots,X_n,Y_1,\dots,Y_n,T$ where (for $1 \leq j \leq n$)
\[
X_j \ceq \pp_{x_j}+2y_j\pp_t,
\qquad
Y_j \ceq \pp_{y_j}-2x_j\pp_t,
\qquad
T \ceq \pp_t.
\]
We denote by $\mathfrak h^n_1$ the horizontal subspace of $\mathfrak h^n$, i.e.,
\[
\mathfrak h^n_1 \ceq \operatorname{span}(X_1,\dots,X_n,Y_1,\dots,Y_n),
\]
and by $\mathfrak h^n_2$ the linear span of $T$; the Lie algebra $\mathfrak h^n$ admits the 2-step stratification $\mathfrak h^n=\mathfrak h^n_1 \oplus \mathfrak h^n_2$. If $p \in \H^n$ we denote by $H_p\H^n \ceq \operatorname{span}(X_1(p),\dots,X_n(p),Y_1(p),\dots,Y_n(p))$ and by $H \H^n \ceq \bigcup_{p \in \H^n}H_p\H^n$ the horizontal fiber bundle. We denote by $\langle \cdot, \cdot \rangle$ the inner product on $\mathfrak h^n$ that makes the basis $X_1,\dots,X_n,Y_1,\dots,Y_n,T$ orthonormal and by $|\cdot|$ the corresponding norm. With a slight abuse of notation, we also denote by $\langle \cdot, \cdot \rangle$ (resp. by $|\cdot|$) the restriction of the inner product (resp. of the norm) on the horizontal subspace $\mathfrak h^n_1$.
\end{defi}

\begin{defi}
 We define the \emph{Carnot-Carathéodory} distance $d$ for $p,q \in \H^n$ as
\[
d(p,q)\ceq \inf\left\{\|h\|_{L^1([0,1],\R^{2n})} : 
\begin{array}{l}
\text{the curve $\gamma_h:[0,1]\to\H^n$ defined by}\\
\gamma_h(0)=p,\ \dot\gamma_h=\sum_{j=1}^n(h_jX_j+h_{j+n}Y_j)(\gamma_h)\\
\text{has final point }\gamma_h(1)=q
\end{array}
\right\}.
\]
Then we denote the corresponding open balls of center $p$ and radius $r$ as $B(p,r)$ and, for $m \in \N$, by $\hau^m$ the corresponding $m$-dimensional Hausdorff measure and  by $\s^m$ the corresponding $m$-dimensional spherical Hausdorff measure.
\end{defi}

\subsection{Functions with bounded variation in Heisenberg groups}

\begin{defi}
Let $\Omega \se \H^n$ be an open set. We say that $u \in L^1_{\loc}(\Omega)$ is a \emph{function of locally bounded $\H$-variation}, and we write $u \in \bv_{\H,\loc}(\Omega)$, if there exists a $\R^{2n}$-valued Radon measure $D_\H u=(D_{X_1}u,\dots, D_{Y_n}u)$ on $\Omega$ such that, for every open set $A \subset \subset \Omega$, for every $1 \leq i \leq n$ and for every $\vp \in C^1_c(A)$ one has 
\[
\int_A \vp d(D_{X_i}u)=-\int_A u X_i\vp d \leb^{2n+1} \quad \text{ and }\quad \int_A \vp d(D_{Y_i}u)=-\int_A u Y_i\vp d \leb^{2n+1}.
 \]
  Moreover, if $u \in L^1(\Omega)$ and $D_\H u$ has bounded total variation $|D_\H u|$, then we say that $u$ has \emph{bounded $\H$-variation} and we write $u \in \bv_\H(\Omega)$. 

For every $u \in \bv_\H(\Omega)$ we define the norm
\[
\nl u \nr_{\bv_\H(\Omega)} \ceq \nl u \nr_{L^1(\Omega)}+|D_\H u|(\Omega).
\]
The space $\bv_\H(\Omega)$ equipped with the above norm is a Banach space.
\end{defi}

\begin{defi}
Let $\Omega \se \H^n$ be an open set. For every $u \in \bv_\H(\Omega)$ we decompose
\[
D_\H u=D_\H ^au+D_\H ^su
\]
where $D_\H^au$ and $D^s_\H u$ denote, respectively, the \emph{absolutely continuous part} and the \emph{singular part} of $D_\H u$ with respect to the Lebesgue measure $\leb^{2n+1}$.
\end{defi}

It is worth recalling that $\leb^{2n+1}$, $\hau^{2n+2}$ and $\s^{2n+2}$ are Haar measures on $\H^n$; in particular, they differ by multiplicative constants.

\begin{defi}
    We say that a measurable set  $E \se \H^n$ has \emph{locally finite $\H$-perimeter} (resp. \emph{finite $\H$-perimeter}) in an open set $\Omega \se \H^n$ if its characteristic function $\chi_E$ belongs to $\bv_{\H,\loc}(\Omega)$ (resp. $\chi_E \in \bv_\H(\Omega)$). 
    In such a case we define the \emph{$\H$-perimeter measure} $P_\H(E, \cdot)$ of $E$ as $P_\H(E, \,\cdot\,):=|D_\H \chi_E|$.
\end{defi}

\begin{defi}\label{def_c1x}
Let $\Omega \se \H^n$ be an open set and $f\colon \Omega \to \R$. We say that $f \in C^1_\H(\Omega)$ if $f$ is continuous and its \emph{horizontal gradient}  $\nabla_\H f \ceq (X_1f,\dots,Y_nf)$,   in the sense of distributions, is represented by a continuous function.
\end{defi}

\begin{defi}
 Let $\Omega \se \H^n$ be an open set,   $u \in L^1_{\loc}(\Omega)$, $z \in \R$ and $p \in \Omega$. We say that $z$ is the \emph{approximate limit} of $u$ at $p$ if
    \[
    \lim_{r \to 0} \fint_{B(p,r)}|u-z|d\leb^{2n+1}=0.
    \]
If the approximate limit of $u$ at $p$ exists, it is also unique (see \cite[Definition 2.19]{dv}). We hence denote by $u^\star(p)$ the approximate limit of $u$ at $p$ and by $\mathcal{S}_u$ the subset of points in $\Omega$ where $u$ does not admit an approximate limit. 
\end{defi}

\begin{defi}\label{def_approxdiff}
    Let $\Omega \se \H^n$ be an open set, $u \in L^1_{\loc}(\Omega)$ and $p \in \Omega \setminus \mathcal{S}_u$. We say that $u$ is \emph{approximately $\H$-differentiable} at $p$ if there exist a neighbourhood $U\subset \Omega$ of $p$ and $f \in C^1_\H(U)$ such that $f(p)=0$ and
    \[
    \lim_{r \to 0}\fint_{B(p,r)} \frac{|u-u^\star (p)-f|}{r}d \leb^{2n+1} =0.
    \]
    The set of points in $\Omega$ where $u$ is approximately $\H$-differentiable is denoted by $\mathcal{D}_u$.
The vector $\nabla_\H f(p) \in \R^{2n}$ is uniquely determined (see~\cite[Proposition 2.30]{dv}): we call it \emph{approximate $\H$-gradient} of $u$ at $p$ and we denote it by $D^\ap_\H u(p)$.
\end{defi}

We  now  spend a few words about intrinsically $C^1$ (or $C^1_\H$) hypersurfaces and the notion of $\H$-rectifiability.

\begin{defi}\label{def_ipersup}
We say that $S \se \H^n$ is a \emph{$C^1_\H$-hypersurface} if for every $p \in S$ there exist $r>0$ and $f \in C^1_\H(B(p,r))$ such that the following facts hold:
\begin{enumerate}
\item[(i)]$ S \cap B(p,r)=\lbrace q \in B(p,r):f(q)=0 \rbrace$,
\item[(ii)]$\nabla_\H f\neq 0$ on $B(p,r)$.
\end{enumerate}
We define the \emph{horizontal normal} to $S$ at $p \in S$ as
$\nu_S(p) \ceq \frac{\nabla_\H f(p)}{|\nabla_\H f(p)|}$. Notice that $\nu_S(p)$ is well defined up to a sign and that it does not depend on the choice of $f$, see \cite[Corollary 2.14]{dv}.
\end{defi}

\begin{defi}\label{def_Xrectifiable}
Let $S \se \H^n$. We say that $S$ is \emph{countably $\H$-rectifiable}\footnote{Since the involved dimension will always be $2n+1$, for the sake of shortness we avoid the more correct, but heavier, terminology {\em countably $(2n+1)$-$\H$-rectifiable}.} if there exists a family $\lbrace S_h: h \in \N \rbrace$ of $C^1_\H$-hypersurfaces such that
\[
\hau^{2n+1}\left( S \setminus \bigcup_{h \in \N}S_h \right)=0.
\]
  Moreover, if $\hau^{2n+1}(S)<+\infty$, we say that $S$ is \emph{$\H$-rectifiable}. We define the \emph{horizontal normal} to a countably $\H$-rectifiable set $S$ at $p \in S$ as 
\[
\nu_S(p) \ceq \nu_{S_h}(p) \text{ if }p \in S_h \setminus \bigcup_{k<h}S_k.
\]
Notice that $\nu_S$ is well defined, up to a sign, $\hau^{2n+1}$-a.e., see \cite[Proposition 2.18]{dv}.
\end{defi}

\begin{defi}\label{def_b+b-}
Fix $p \in \H^n$ and $\nu \in \mathbb{S}^{2n-1}$. Let $f_{p,\nu} \in C^1_\H(\H^n)$ be the unique function such that $f_{p,\nu}(p)=0$ and $\nabla_\H f_{p,\nu} \equiv \nu$. For every $r>0$ we define
\[
B^\pm_\nu(p,r) \ceq B(p,r) \cap \lbrace \pm f_{p,\nu}>0\rbrace.
\]
\end{defi}

By using the \emph{half-balls} from Definition \ref{def_b+b-}, one can define intrinsic approximate jumps.

\begin{defi}\label{def_approxjump}
 Let $\Omega \se \H^n$ be an open set, $u \in L^1_{\loc}(\Omega)$ and $p \in \Omega$. We say that $u$ has an \emph{approximate $\H$-jump} at $p$ if there exist $u^+,u^- \in \R$ with $u^+ \neq u^-$ and $\nu \in \mathbb{S}^{2n-1}$ such that
\begin{equation}\label{eq_xjump}
\lim_{r \to 0} \fint_{B^+_\nu(p,r)} |u-u^+|d\leb^{2n+1}=\lim_{r \to 0} \fint_{B^-_\nu(p,r)} |u-u^-|d\leb^{2n+1}=0.
\end{equation}
The \emph{jump set} $\J_u$ is defined as the set of points where $u$ has an approximate $\H$-jump.

It is worth remarking that, by~\cite[Theorem 1.2 and Remark 2.25]{dv}, the Hausdorff measure $\hau^{2n+1}$ on the jump set is $\sigma$-finite.
\end{defi}

\begin{defi}
 Let $\Omega \se \H^n$ be an open set and $u \in \bv_\H(\Omega)$. We define the \emph{jump part} of $D_\H u$ as 
\[
D_\H^ju \ceq D_\H^s u \res \J_u
\]
and the \emph{Cantor part} of $D_\H u$ as
\[
D_\H ^cu \ceq D_\H^s u \res (\Omega \setminus \J_u).
\]
 We say that $u$ is a \emph{special function of bounded $\H$-variation}\footnote{Let us observe that the definition is different from the one provided in \cite{sbvx} since, in Heisenberg groups, the jump set $\J_u$ is always countably $\H$-rectifiable, see for instance \cite{dv,sy03} as well as the recent~\cite{dimarco2026whitneyextensiontheoremintrinsic}.}, and we write $u \in \sbv_\H(\Omega)$, if $D_\H^cu=0$.
\end{defi}

\subsection{Heisenberg cubes}\label{subsec_cubi}

\begin{defi}
In the following we always identify $\H^n \equiv \R^{2n+1}$ by means of exponential coordinates and we will use the following notation:
\[
 \C_H \ceq \left(-\frac12,\frac12\right)^{2n},
 \qquad
 \C \ceq \C_H\times\left(-\frac12,\frac12\right),
\]
i.e., $\C \subset \H^n$ is the unit Euclidean cube centered at $0 \in \H^n$. Clearly, $\leb^{2n+1}(\C)=1$.   We denote by $U(n)$ the group of unitary rotations of $\mathbb C^n\equiv\R^{2n}$.  For $U\in U(n)$ we define the automorphism
\[
 \Phi_U(z,t) \ceq (Uz,t).
\]
The map $\Phi_U$ is an isometry of $\H^n$ for every $U \in U(n)$, see for instance \cite[Section 7]{kn10}. For $p\in\H^n$, $r>0$ and $U\in U(n)$ we define
\[
 T_{p,r,U}(z,t)\ceq p\cdot\delta_r(\Phi_U(z,t)),
\]
and
\begin{equation}\label{eq:cubes}
 \C_{r,U}(p) \ceq  T_{p,r,U}(\C).
\end{equation}
Then $\leb^{2n+1}(\C_{r,U}(p))=r^{2n+2}$.  By compactness of $\overline \C$ and because $\C$ is a neighborhood of the identity, there are dimensional constants $0<2\mathfrak a<\mathfrak b<\infty$ such that
\begin{equation}\label{eq:cube-ball}
 B(p,\mathfrak ar)\subset \C_{r,U}(p)\subset B(p,\tfrac{\mathfrak b}{2}r),
 \quad
 \operatorname{diam}\bigl(\overline{\C_{r,U}(p)}\bigr)\le \mathfrak br
 \quad\text{for all }p \in \H^n,r>0,U \in U(n).
\end{equation}
We denote by $\mathcal T_r$ the collection of all elements in \eqref{eq:cubes}, namely
\[
\mathcal T_r\coloneqq\{\C_{r,U}(p): p\in \H^n, U\in U(n)\}.
\]
We call each element of $\mathcal T_r$ a \emph{Heisenberg $r$-cube}.
\end{defi}
\begin{rem}\label{rem_mathcalP}
The family of cubes $\{k\cdot \C:k\in\Z^{2n+1}\}$ is a paving of $\Hn$: namely, a collection of pairwise disjoint sets whose union covers the whole $\H^n$ up to a $\leb^{2n+1}$-null set. Indeed, away from the coordinate boundaries, for $p=(z,t)$ there is a unique $k_H\in\Z^{2n}$ such that $z-k_H\in \C_H$ and then a unique $k_t\in\Z$ such that
\[
 t-2\omega(k_H,z-k_H)-k_t\in\left(-\frac12,\frac12\right).
\]
Thus $p=(k_H,k_t)\cdot(z-k_H,t-2\omega(k_H,z-k_H)-k_t)$; existence may fail only on a countable union of Euclidean hyperplanes, which is Lebesgue-null.  Accordingly, for $r>0$ and $U\in U(n)$ we set
\begin{equation}\label{eq:pavage}
 \mathcal P_{r,U} \ceq \left\{\delta_r\Phi_U(k\cdot \C):k\in\Z^{2n+1}\right\}.
\end{equation}
Every element of $\mathcal P_{r,U}$ belongs to $\mathcal T_r$. 
    
\end{rem}

We are finally ready to properly define the functional $k_r(\cdot)$ which appears in Theorem \ref{teo_intromain}. Before doing so, we recall the following useful notation:  for every measurable set $E \se \H^n$ with $\leb^{2n+1}(E)\neq0$ and $u \in L^1(E)$ we use, here and in the following, the notation
    \[
    (u)_E \ceq \fint_E u d\leb^{2n+1}=\frac{1}{\leb^{2n+1}(E)}\int_E ud\leb^{2n+1}.
    \]

\begin{defi}

    Let $\Omega \se \H^n$ be an open set, $u \in L^1(\Omega)$ and $r>0$. We define
    \[
    k_r(u) \ceq r^{2n+1} \sup_{\mathcal G_r} \sum_{\Q \in \mathcal G_r} \fint_{\Q} \left| u- (u)_{\Q}\right|d\leb^{2n+1},
    \]
    where $\mathcal G_r$ varies among all the possible subsets of $\mathcal T_r$ such that the Heisenberg $r$-cubes in $\mathcal G_r$ are pairwise disjoint and contained in $\Omega$.
\end{defi}

The following lemma is a strengthened version of \cite[Lemma 4.1]{dmi26} in Heisenberg groups.

\begin{lem}\label{lem:pavage}
Let $A\se \Hn$ be a bounded and open set, $U \in U(n)$, and let
\[
 A_{r,U} \ceq \bigcup\{Q\in\mathcal P_{r,U}:Q\subset A\}.
\]
Then
\[
 \Leb(A\setminus A_{r,U})\xrightarrow{r\to 0}0
\]
uniformly with respect to $U\in U(n)$.
\end{lem}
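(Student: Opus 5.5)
The strategy is to show that every point of $A$ sufficiently far from $\partial A$ is covered by a cube of $\mathcal P_{r,U}$ contained in $A$. We then control the measure of a thin neighborhood of $\partial A$ inside $A$, independently of $U$.

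The first step uses \eqref{eq:cube-ball}. Every element of $\mathcal P_{r,U}$ belongs to $\mathcal T_r$, so every $Q\in\mathcal P_{r,U}$ satisfies $\diam(\overline Q)\le \mathfrak b r$, with $\mathfrak b$ independent of $U$ and $r$. Set
\[
A^{(s)}\ceq\{p\in A:\ \dist(p,\H^n\setminus A)>s\},
\]
where $\dist$ is taken with respect to $d$. By Remark~\ref{rem_mathcalP}, after applying the group automorphism $\delta_r\circ\Phi_U$ (which maps null sets to null sets), $\mathcal L^{2n+1}$-a.e.\ point $p\in\H^n$ lies in some $Q\in\mathcal P_{r,U}$. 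If moreover $p\in A^{(\mathfrak b r)}$, then every $q\in Q$ satisfies $d(p,q)\le\mathfrak b r<\dist(p,\H^n\setminus A)$, so $q\in A$ and hence $Q\subset A$. Therefore $A^{(\mathfrak br)}\setminus A_{r,U}$ is $\mathcal L^{2n+1}$-null, and
\[
\Leb(A\setminus A_{r,U})\le \Leb(A\setminus A^{(\mathfrak b r)}) \qquad\text{for every }U\in U(n).
\]

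The right-hand side does not depend on $U$, so the uniformity is automatic and it remains to show $\Leb(A\setminus A^{(s)})\to0$ as $s\to0$. The sets $A\setminus A^{(s)}=\{p\in A:\dist(p,\H^n\setminus A)\le s\}$ decrease as $s\downarrow0$. Their intersection is $\{p\in A:\dist(p,\H^n\setminus A)=0\}$, which is empty because $A$ is open and $d$ induces the Euclidean topology. Since $A$ is bounded, $\Leb(A)<\infty$, and continuity from above of the measure gives the claim.

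I expect no real obstacle. The only delicate point is the null set where the paving fails to cover, and this is handled by noting that $\delta_r\circ\Phi_U$ is a linear map of $\R^{2n+1}$ with Jacobian $r^{2n+2}$, so it preserves null sets. Boundedness of $A$ is used only to guarantee finite measure; no regularity of $\partial A$ is needed.
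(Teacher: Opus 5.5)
Your proposal is correct and follows essentially the same argument as the paper. Where the paper argues contrapositively, you argue directly: a.e.\ point of $A$ at distance greater than $\mathfrak b r$ from $\H^n\setminus A$ lies in a tile of $\mathcal P_{r,U}$ contained in $A$. Either way one gets $A\setminus A_{r,U}\subset\{p\in A:\dist(p,\H^n\setminus A)\le\mathfrak b r\}$ up to a null set, and continuity from above gives decay independent of $U$.
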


\begin{proof}
Let $N_{r,U}$ be the union of the boundaries of the tiles in $\mathcal P_{r,U}$.  Then $\Leb(N_{r,U})=0$.  If $p\in A\setminus(A_{r,U}\cup N_{r,U})$ and $\Q\in\mathcal P_{r,U}$ is the unique tile containing $p$, then $\Q\not\subset A$.  Hence
\[
 \dist(p,\Hn\setminus A)\le\diam(\Q)\le\mathfrak b r,
\]
where $\mathfrak b$ is the same dimensional constant appearing in \eqref{eq:cube-ball}. It follows, up to the null set $N_{r,U}$, that
\[
 A\setminus A_{r,U}
 \subset\{p\in A:\dist(p,\Hn\setminus A)\le\mathfrak b r\}.
\]
The sets on the right hand side of the above inclusion decrease to the empty set as $r\to 0$ and have finite measure.  The conclusion follows from continuity from above, uniformly in $U$.
\end{proof}

In the following lemmata we collect the preliminary results that we are going to need in order to prove our main result, Theorem \ref{teo_intromain}.

\begin{lem}\label{lem:NormalOnCube}
For every $\nu\in\mathbb S^{2n-1}$,
\begin{equation}\label{eq:phi-cube}
\int_{\C}|\langle (\nu,0), p\rangle| dp=\int_{\C_H}|\langle\nu,z\rangle|dz\le\frac14.
\end{equation}
Equality holds if and only if $\nu=\pm e_j$ for some $j\in\{1,\ldots,2n\}$.
\end{lem}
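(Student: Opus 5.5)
The equality of the two integrals is immediate from Fubini. The integrand $|\langle(\nu,0),p\rangle|=|\langle\nu,z\rangle|$ does not depend on $t$, and $\C=\C_H\times(-\frac12,\frac12)$ with the $t$-interval of length one. So the whole content is the inequality $\int_{\C_H}|\langle\nu,z\rangle|\,dz\le\frac14$ on the unit cube $\C_H\subset\R^{2n}$, together with the characterization of equality. This is the classical Euclidean fact used in~\cite{abbf16,dpfp}; the Heisenberg structure plays no role.

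\textbf{Probabilistic reformulation.} Let $Z=(Z_1,\dots,Z_{2n})$ be uniformly distributed on $\C_H$, so the $Z_j$ are i.i.d.\ uniform on $(-\frac12,\frac12)$. Then $\int_{\C_H}|\langle\nu,z\rangle|dz=\mathbb E|S|$ with $S=\sum_j\nu_jZ_j$. The baseline case is immediate: for $\nu=\pm e_j$ we get $\mathbb E|Z_1|=2\int_0^{1/2}s\,ds=\frac14$.

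\textbf{Proof of the inequality.} For general $\nu$ I would use a symmetrization argument. Since the $Z_j$ are symmetric and independent, $S$ has the same law as $\sum_j\nu_j\ve_j|Z_j|$ with $\ve_j$ i.i.d.\ Rademacher signs independent of the $|Z_j|$. Write $W_j:=|Z_j|$, uniform on $(0,\frac12)$, and $W:=(W_1,\dots,W_{2n})$. Conditioning on $W$, Khintchine's inequality with optimal constant (Szarek) gives
\[
\mathbb E_\ve\Bigl|\sum_j\ve_ja_j\Bigr|\ \ge\ \tfrac{1}{\sqrt2}|a|,
\]
but that is a lower bound, which is the wrong direction. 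So I would instead compare with the Gaussian. Rewriting via the Fourier formula $|x|=\frac{2}{\pi}\int_0^\infty\frac{1-\cos(xs)}{s^2}\,ds$ gives
\[
\mathbb E|S|=\frac{2}{\pi}\int_0^\infty\frac{1-\prod_j\phi(\nu_js)}{s^2}\,ds,\qquad \phi(s)=\frac{\sin(s/2)}{s/2}.
\]
The claim then becomes $\prod_j\phi(\nu_js)\ge\phi(s)$ in a suitable averaged sense. A cleaner route is the known result (Ball's cube-slicing circle of ideas; the $L^1$ version is due to Latała–Oleszkiewicz / König–Koldobsky) that $\nu\mapsto\mathbb E|\sum\nu_jZ_j|$ over the unit sphere is maximized exactly at the coordinate vectors. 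It is attractive to prove this directly by a two-coordinate reduction: replace $(\nu_1,\nu_2)$ by $(\sqrt{\nu_1^2+\nu_2^2},0)$, conditioning on the other coordinates, and show $\mathbb E|c+aZ_1+bZ_2|\le\mathbb E|c+\sqrt{a^2+b^2}Z_1|$ for every $c\in\R$. The density of $aZ_1+bZ_2$ is a symmetric trapezoid with the same variance as $\sqrt{a^2+b^2}Z_1$. The function $g(x)=\mathbb E|c+x|$ is convex, and the trapezoid is dominated by the uniform law in the convex order, because the difference of their distribution functions crosses zero once, on each side, with equal second moments. However, convex-order domination needs equal means and the integrated-CDF comparison, not just equal variances. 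Hence the main step, and the expected obstacle, is verifying that the trapezoidal law is smaller in convex order than the uniform law of equal variance. I would check this by computing the potential functions $x\mapsto\mathbb E|x-Y|$ of the two laws explicitly (piecewise polynomial) and comparing them. If this fails, I would fall back on citing the known $L^1$ cube-section extremality.

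\textbf{Iteration and equality case.} Iterating the two-coordinate step merges all mass into one coordinate and yields the bound $\frac14$. Equality forces, at each merging step with $ab\neq0$, the strict inequality to be saturated. Strictness holds because the trapezoid is not uniform, so $g$ being strictly convex near the relevant region gives strict inequality. Hence equality requires $\nu$ to have a single nonzero entry, i.e.\ $\nu=\pm e_j$.
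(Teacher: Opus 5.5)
Your Fubini reduction to $\C_H$ and the value $\mathbb E|Z_1|=\frac14$ are correct. The step that carries the argument, however, is false, not merely unverified.

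The claimed inequality $\mathbb E|c+aZ_1+bZ_2|\le \mathbb E|c+\sqrt{a^2+b^2}\,Z_1|$ for every $c\in\R$ fails whenever $ab\neq0$ and $\frac12\sqrt{a^2+b^2}\le|c|<\frac12(|a|+|b|)$. In that range, $c+\sqrt{a^2+b^2}\,Z_1$ has a.s.\ the sign of $c$, so the right-hand side equals $|c|$. Meanwhile $c+aZ_1+bZ_2$ takes both signs with positive probability, so the left-hand side is strictly larger than $|\mathbb E(c+aZ_1+bZ_2)|=|c|$. A concrete instance is $a=b=1/\sqrt2$, $c=3/5$.

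The structural reason is the following. For centered laws supported in $[-M,M]$ one has $\int_{\R}\bigl(\mathbb E|Y-c|-\mathbb E|X-c|\bigr)\,dc=\mathbb E Y^2-\mathbb E X^2$, because $\int_{-M}^M|x-c|\,dc=M^2+x^2$ for $|x|\le M$. Moreover, $c\mapsto\mathbb E|X-c|$ determines the law of $X$. Hence two centered, compactly supported laws with equal variance are comparable in convex order only if they coincide. Equal second moments are therefore an obstruction, not a help.

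For $n=1$ you only need $c=0$. There the inequality does hold, by the explicit formula $\mathbb E|aZ_1+bZ_2|=\frac a4+\frac{b^2}{12a}$ for $a\ge b\ge0$. For $n\ge2$, though, the conditioning variable $c=\sum_{j\ge3}\nu_jZ_j$ falls in the bad range with positive probability (e.g.\ $\nu=\frac12(1,1,1,1)$). The merging step would then need an averaged inequality that depends on the law of $c$, and no pointwise two-variable comparison can supply it. Your iteration and your equality analysis both rely on the same false domination, so both collapse. What remains is the fallback of quoting the result, which is all the paper does, via \cite[Lemma~3.1]{fms16} applied in dimension $2n$.

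If you want a self-contained argument, use unimodality instead of merging.
\begin{itemize}
\item The density $A(s)$ of $S=\langle\nu,Z\rangle$ is the $(2n-1)$-dimensional Euclidean volume of the section $\C_H\cap\{\langle\nu,z\rangle=s\}$. It is even and, by Brunn's concavity principle, nonincreasing on $[0,\infty)$.
\item Decompose the density of $|S|$ by layer cake as a mixture of uniform densities on intervals $[0,\ell)$. This is Khinchine's representation $|S|\overset{d}{=}VY$, with $V$ uniform on $(0,1)$ and independent of $Y\ge0$.
\item It gives $\mathbb E|S|=\frac12\mathbb E Y$ and $\mathbb E S^2=\frac13\mathbb E Y^2$. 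By Cauchy--Schwarz, $\mathbb E|S|\le\frac{\sqrt3}{2}(\mathbb E S^2)^{1/2}=\frac{\sqrt3}{2}\cdot\frac{1}{\sqrt{12}}=\frac14$.
\item Equality holds iff $Y$ is a.s.\ constant, i.e.\ $S$ is uniform on a symmetric interval. Since $S$ is supported on $[-\frac12\sum_j|\nu_j|,\frac12\sum_j|\nu_j|]$ and has variance $\frac1{12}$, uniformity forces $\sum_j|\nu_j|=1=(\sum_j\nu_j^2)^{1/2}$, i.e.\ $\nu=\pm e_j$.
\end{itemize}
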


\begin{proof}
This is~\cite[Lemma~3.1]{fms16}, applied in dimension $2n$ to $\C_H$.
\end{proof}
\begin{lem}\label{lem_14xi}
    For every horizontal vector $\xi \in \R^{2n}$, $U \in U(n), p \in \Hn$ and $r>0$ we have
    \begin{itemize}
    \item[(i)] $\displaystyle\int_\C |\langle \xi,Uz \rangle| dzdt\leq \frac{1}{4}|\xi|;$
    \item[(ii)]$\displaystyle\int_{\C_{r,U}(p)}\left| \left\langle \xi,\pi_H(p^{-1}\cdot q) \right\rangle \right| d\leb^{2n+1}(q) \leq \frac{1}{4}|\xi|r^{2n+3},$
    \end{itemize}
    where $\pi_H:\H^n \to \R^{2n}$ denotes the usual horizontal projection. Equality holds in $(i)$ and $(ii)$ if and only if $\xi= \pm |\xi|Ue_j$ for some $j=1,\dots,2n$.
\end{lem}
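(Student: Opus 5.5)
For (i), the plan is to reduce to Lemma~\ref{lem:NormalOnCube}. The integrand does not depend on $t$, and the $t$-section of $\C$ has length $1$, so
\[
\int_\C |\langle \xi,Uz\rangle|\,dz\,dt=\int_{\C_H}|\langle U^{T}\xi,z\rangle|\,dz .
\]
If $\xi=0$ there is nothing to prove. Otherwise set $\nu\ceq U^{T}\xi/|\xi|\in\mathbb S^{2n-1}$; this uses that $U$ is orthogonal, since unitary maps of $\mathbb C^n$ are orthogonal on $\R^{2n}$, so $|U^T\xi|=|\xi|$. By Lemma~\ref{lem:NormalOnCube} the integral is at most $\frac14|\xi|$. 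Equality holds if and only if $\nu=\pm e_j$, that is, if and only if $\xi=\pm|\xi|Ue_j$.

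For (ii), the plan is to change variables via $q=T_{p,r,U}(z,t)=p\cdot\delta_r(\Phi_U(z,t))$. The map $T_{p,r,U}$ sends $\C$ onto $\C_{r,U}(p)$. Its Jacobian determinant is $r^{2n+2}$: left translations preserve $\Leb$, $\Phi_U$ has determinant $\det U=\pm1$ (in fact $1$), and $\delta_r$ contributes $r^{2n}\cdot r^2$.

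Next compute the integrand. We have $p^{-1}\cdot q=\delta_r(\Phi_U(z,t))=(rUz,r^2t)$, so $\pi_H(p^{-1}\cdot q)=rUz$. Therefore
\[
\int_{\C_{r,U}(p)}|\langle\xi,\pi_H(p^{-1}q)\rangle|\,dq=r^{2n+2}\cdot r\int_\C|\langle\xi,Uz\rangle|\,dz\,dt\le\tfrac14|\xi|r^{2n+3}.
\]
The final inequality is (i). Since the two integrals differ only by the positive factor $r^{2n+3}$, the equality characterization transfers directly from (i).

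No step is a genuine obstacle. The only points to check carefully are the Jacobian bookkeeping, including that left translation has unit Jacobian (the group law is a polynomial shear), and that $U^T$ preserves the Euclidean norm on $\R^{2n}$, so that the normalization in Lemma~\ref{lem:NormalOnCube} applies.
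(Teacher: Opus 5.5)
Your proposal is correct and follows essentially the same route as the paper. In (i) you apply Lemma~\ref{lem:NormalOnCube} with $\nu=U^{*}\xi/|\xi|$ and use the orthogonality of $U$ to transfer the equality case. In (ii) you change variables via $q=T_{p,r,U}(z,t)$, which gives the factor $r^{2n+2}\cdot r=r^{2n+3}$ and reduces everything, including the equality characterization, to (i).
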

    \begin{proof}
    We may assume without loss of generality that $\xi\neq0$. 
       To prove $(i)$ it is enough to apply Lemma~\ref{lem:NormalOnCube} with the choice $\nu=\frac{U^*\xi}{|\xi|}$. It is clear that if $\xi=\pm |\xi| Ue_j$, the equality holds in $(i)$. On the other hand, if equality holds in $(i)$ and $\xi\neq 0$ we find 
       \[
        \int_\C\left|\left\langle \left(U^*\tfrac{\xi}{|\xi|},0\right), (z,t)\right\rangle\right| \, dzdt=\frac14,
       \]
       hence by Lemma~\ref{lem:NormalOnCube} we have $U^*\xi=\pm |\xi|e_j$ for some $j=1,\dots,2n$.
       \\
       To prove $(ii)$ we may apply the change of variable $q=p\cdot\delta_r(\Phi_U(z,t))$:
        \begin{equation}\label{eq:Lemma2.4}
        \begin{aligned}
        \int_{\C_{r,U}(p)}|\langle\xi,\pi_H(p^{-1}\cdot q)\rangle|d \leb^{2n+1}(q)&= \int_\C|\langle \xi, \pi_H(p^{-1}\cdot p\cdot \delta_r (Uz,t))\rangle| r^{2n+2}\ dzdt\\&=r^{2n+3}\int_\C|\langle \xi, Uz\rangle| dzdt.
        \end{aligned}
        \end{equation}
       The thesis is then a consequence of inequality $(i)$. The analysis of the equality case for $(ii)$ is a direct consequence of \eqref{eq:Lemma2.4} and $(i)$.
    \end{proof}

\begin{lem}\label{lem_auxacl}
    Let $\rho>0$,  $\eta \in \mathbb{S}^{2n-1}$ and let $U \in U(n)$ be such that $Ue_1=\eta$. Then for every $\xi \in \R^{2n}$ with $\xi \neq 0$ one has the implication
    \[
    \left|\frac{\xi}{|\xi|}-\eta\right|\leq \rho\quad \Rightarrow\quad  \int_\C |\langle \xi, Uz \rangle|dzdt \geq \left( \frac{1}{4}-\frac{\sqrt{2n}}{2}\rho \right) |\xi|.
    \]
\end{lem}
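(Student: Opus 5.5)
The plan is to compare the integral for $\xi$ with the one for $|\xi|\eta$, and to evaluate the latter exactly through the equality case of Lemma~\ref{lem_14xi}. Write $\hat\xi\ceq\xi/|\xi|$. By homogeneity it is enough to show
\[
\int_\C|\langle\hat\xi,Uz\rangle|\,dzdt\ \geq\ \frac14-\frac{\sqrt{2n}}{2}\rho ,
\]
and then multiply by $|\xi|$.

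Since $\eta=Ue_1$, Lemma~\ref{lem_14xi}(i) is an equality for $\xi=\eta$, so
\[
\int_\C|\langle\eta,Uz\rangle|\,dzdt=\frac14.
\]
We can also see this directly: $\langle Ue_1,Uz\rangle=z_1$, because $U$ is orthogonal on $\R^{2n}$, and $\int_{\C_H}|z_1|\,dz=1/4$.

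By the reverse triangle inequality, $|\langle\hat\xi,Uz\rangle|\geq|\langle\eta,Uz\rangle|-|\langle\hat\xi-\eta,Uz\rangle|$ pointwise. Integrating over $\C$ gives
\[
\int_\C|\langle\hat\xi,Uz\rangle|\,dzdt\geq\frac14-\int_\C|\langle\hat\xi-\eta,Uz\rangle|\,dzdt .
\]

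The only remaining step is to bound the error term by $\frac{\sqrt{2n}}{2}\rho$. By Cauchy--Schwarz and orthogonality of $U$,
\[
|\langle\hat\xi-\eta,Uz\rangle|\leq|\hat\xi-\eta|\,|Uz|=|\hat\xi-\eta|\,|z|\leq\rho\,|z| .
\]
For $z\in\C_H=(-\tfrac12,\tfrac12)^{2n}$ we have $|z|\leq\sqrt{2n}\cdot\frac12$. Since $\leb^{2n+1}(\C)=1$, the error integral is at most $\frac{\sqrt{2n}}{2}\rho$, which is exactly the constant in the statement. Combining the three displays proves the claim.

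There is no real obstacle here. The one point to watch is to use the crude $L^\infty$ bound $|z|\leq\sqrt{2n}/2$ on the cube, which produces the stated constant, rather than a sharper estimate. Alternatively, Lemma~\ref{lem_14xi}(i) would give the better constant $\rho/4$, which also implies the claim because $\tfrac14\leq\tfrac{\sqrt{2n}}{2}$.
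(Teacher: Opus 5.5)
Your proof is correct and takes essentially the same route as the paper. You use a pointwise reverse triangle inequality against the extremal direction $\eta=Ue_1$, where the paper first rewrites via $U^*$ so that the comparison is with $e_1$, and then Cauchy--Schwarz with the bound $|z|\le\sqrt{2n}/2$ on $\C_H$. Your side remark that Lemma~\ref{lem_14xi}(i) gives the sharper error term $\rho/4$ is also valid.
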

\begin{proof}
Since $U \in U(n)$, we have
$
|U^*\tfrac{\xi}{|\xi|}-e_1|=|\tfrac{\xi}{|\xi|}-Ue_1| \leq \rho.
$
 Moreover, for any $z\in \C_H$ one has $|z| \leq \frac{\sqrt{2n}}{2}$ and $\int_\C |\langle e_1,z\rangle|dzdt =1/4$, hence
\begin{align*}
        \int_\C |\langle \xi,Uz\rangle|dzdt&=  |\xi|\int_\C |\langle U^*\tfrac{\xi}{|\xi|},z\rangle|dzdt\ \geq\ |\xi| \int_\C (|\langle e_1,z\rangle|-|\langle U^*\tfrac{\xi}{|\xi|}-e_1,z\rangle|)dzdt\\&
         \geq \frac{1}{4}|\xi|-|\xi||U^*\tfrac{\xi}{|\xi|}-e_1|\int_\C|z|dzdt\ 
         \geq\ \frac{1}{4}|\xi|-|\xi|\rho\frac{\sqrt{2n}}{2},
    \end{align*}
which is enough to conclude.  
\end{proof}

The next result is a standard consequence of the Taylor formula on step-two Carnot groups, see \cite[Section 4]{gcc10}.

\begin{lem}\label{lem_taylor}
    Let $\Omega \se \H^n$ be an open set and $u \in L^1_{\loc}(\Omega)$. Assume that $\Omega\setminus \J_u$ is open and that $u \in C^\infty(\Omega \setminus \J_u)$ and let $K  \se \Omega \setminus \J_u$ be a compact set. Then there exist constants $C_K>0$, $\delta_K>0$ and a function $R_{p,U}\colon \R\times\C\to \R$ such that the following holds. For every $p \in K$, $U \in U(n)$, $r\in (0,\delta_K)$ and $(z,t) \in \C$ one has that $T_{p,r,U}(z,t)\in \Omega\setminus \J_u$  and 
    \[
    u(T_{p,r,U}(z,t))=u(p)+r \langle \nabla_\H u(p),Uz\rangle+r^2 R_{p,U}(r,z,t), 
    \]
    with
    \[
    \sup\{|R_{p,U}(r,z,t)|:p\in K, U\in U(n), (z,t)\in \C, r\in [0,\delta_K)\} \leq C_K.
    \]
    Consequently, for all such $p\in K$, $U\in U(n)$ and all $r\in [0,\delta_K)$
\begin{equation}\label{eq:stimamediagradiente}
  \left|  \fint_{\C_{r,U}(p)} \left| u-(u)_{\C_{r,U}(p)}\right|d\leb^{2n+1} -r \int_{\C}|\langle \nabla_\H u(p),Uz\rangle|dzdt \right| \leq 2C_Kr^2.
    \end{equation}
\end{lem}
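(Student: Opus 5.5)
Since $K\subset\Omega\setminus\J_u$ is compact and $\Omega\setminus\J_u$ is open, the distance $d_0\ceq\dist(K,\H^n\setminus(\Omega\setminus\J_u))$ is positive (with the convention $d_0=+\infty$ if the complement is empty). Let $\mathfrak b$ be the constant of \eqref{eq:cube-ball} and set $K'\ceq\{q:\dist(q,K)\le d_0/2\}$. Then $K'$ is compact, because closed bounded sets are compact for $d$, and $K'\subset\Omega\setminus\J_u$. Choosing $\delta_K\le d_0/\mathfrak b$ (and $\delta_K\le1$), \eqref{eq:cube-ball} gives $\overline{\C_{r,U}(p)}\subset \overline{B(p,\mathfrak b r/2)}\subset K'$ for all $p\in K$, $U\in U(n)$ and $r<\delta_K$. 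This proves the first claim.

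For the expansion, fix $p\in K$, $U\in U(n)$ and $(z,t)\in\C$, and consider $g(r)\ceq u(p\cdot\delta_r(Uz,t))$ for $r\in[0,\delta_K)$. It is smooth, since the curve stays in $K'$. Writing $\delta_r(Uz,t)=\exp(r\,Z+r^2 tT)$ with $Z=\sum_j (Uz)_jX_j+(Uz)_{j+n}Y_j$, the curve $r\mapsto p\cdot\delta_r(Uz,t)$ is smooth in $(r,p,U,z,t)$, with derivative at $r=0$ equal to $Z(p)$, a horizontal vector. Hence $g(0)=u(p)$ and $g'(0)=\langle\nabla_\H u(p),Uz\rangle$. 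The $tT$ term only contributes at order $r^2$, which is the step-two Taylor formula of \cite{gcc10}. Define
\[
R_{p,U}(r,z,t)\ceq \frac{g(r)-g(0)-rg'(0)}{r^2}=\int_0^1(1-s)\,g''(sr)\,ds .
\]
Then $g''$ is a polynomial in $(z,t)$, with coefficients bounded on $\overline\C$, composed with derivatives of $u$ up to order two (Euclidean, or equivalently in terms of $X_j,Y_j,T$) evaluated on $K'$. It is therefore bounded by a constant $C_K$ depending only on $\sup_{K'}|D^2u|$, $\sup_{K'}|Du|$ and $\overline\C$, uniformly in $p,U,z,t$ and $r<\delta_K\le 1$. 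If one prefers to avoid the exponential, an explicit chain-rule computation of $\frac{d^2}{dr^2}u(p\cdot(rUz,r^2t))$ in coordinates gives the same bound directly. The main (mild) obstacle is precisely checking that the first-order term has no $T$-component; this holds because $\frac{d}{dr}\big|_{r=0}(rUz,r^2t)$ has no $t$-part, and left translation maps $\partial_{z}$ at $0$ to horizontal fields.

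For \eqref{eq:stimamediagradiente}, change variables $q=T_{p,r,U}(z,t)$, whose Jacobian is $r^{2n+2}$, so that $\fint_{\C_{r,U}(p)}h=\int_\C h\circ T_{p,r,U}$. Put $\ell(z)\ceq r\langle\nabla_\H u(p),Uz\rangle$, which satisfies $\int_\C\ell=0$ since $\C$ is symmetric. Then
\[
u\circ T_{p,r,U}-(u)_{\C_{r,U}(p)}=\ell+r^2\Bigl(R-\int_\C R\Bigr),
\]
because the constant $u(p)$ cancels. Hence, pointwise,
\[
\bigl||u\circ T-(u)|-|\ell|\bigr|\le r^2\Bigl|R-\int_\C R\Bigr|\le 2C_Kr^2 .
\]
Integrating over $\C$, which has measure $1$, yields \eqref{eq:stimamediagradiente}.
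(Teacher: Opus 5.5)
Your proposal is correct and is essentially the paper's route. The paper gives no argument beyond invoking the step-two Carnot Taylor formula of \cite{gcc10}; your one-variable expansion of $r\mapsto u(p\cdot\delta_r(Uz,t))$ with integral remainder is exactly the special case needed. Your check that the velocity at $r=0$ is the horizontal vector $\sum_j (Uz)_jX_j(p)+(Uz)_{j+n}Y_j(p)$ is right, and so is the averaging step: the linear term has zero mean on the symmetric cube, which gives $\bigl||u\circ T_{p,r,U}-(u)_{\C_{r,U}(p)}|-|\ell|\bigr|\le 2C_Kr^2$ pointwise.

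The only fix needed is cosmetic. When $\Omega\setminus\J_u=\H^n$, your convention $d_0=+\infty$ makes $K'=\H^n$, which is not compact; replace $d_0$ by $\min\{d_0,1\}$.
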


\begin{lem}\label{lem_salti}
    For $\mu \in \mathbb{S}^{2n-1}$ and $\tau,a,b \in \R$ define
    \begin{align*}
        &    E_{\mu,\tau} \ceq \{ (z,t) \in \C:\langle \mu,z \rangle>\tau\}  \\
        &   J_{a,b,\mu,\tau}(z,t) \ceq a \chi_{E_{\mu,\tau}}(z,t)+b \chi_{\C \setminus E_{\mu,\tau}}(z,t).
    \end{align*}
    Then
    \begin{equation}\label{eq_addclaim}
    \int_\C |J_{a,b,\mu,\tau}-(J_{a,b,\mu,\tau})_\C|d\leb^{2n+1} \leq \frac{1}{2}|D_\H J_{a,b,\mu,\tau}|(\C).
    \end{equation}
    Moreover, if $\mu=e_1$ and $|\tau|\leq 1/2$, then
    \begin{equation}\label{eq_eqaux}
    \int_\C |J_{a,b,e_1,\tau}-(J_{a,b,e_1,\tau})_\C|d\leb^{2n+1}=\frac{1}{2}(1-4\tau^2)|a-b|.
    \end{equation}
\end{lem}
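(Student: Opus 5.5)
We will compute both sides of \eqref{eq_addclaim} explicitly. The key observation is that the set $E_{\mu,\tau}$ is a cylinder in the vertical direction: $E_{\mu,\tau}=E'_{\mu,\tau}\times(-\frac12,\frac12)$, where $E'_{\mu,\tau}\ceq\{z\in\C_H:\langle\mu,z\rangle>\tau\}$. As a consequence, both sides reduce to Euclidean quantities in $\R^{2n}$.

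\textbf{Right-hand side.} The boundary of $E_{\mu,\tau}$ inside $\C$ is the vertical hyperplane piece $\{\langle \mu,z\rangle=\tau\}\cap\C$. Since $f(z,t)=\langle\mu,z\rangle-\tau$ satisfies $\nabla_\H f=\mu$ (the $\pp_t$ components of the $X_j,Y_j$ do not act on $f$), this is the zero set of $f$. Hence $D_\H J=(b-a)\,\mu\,\sigma$, where $\sigma$ is the Euclidean surface measure on that piece, since the Euclidean normal $(\mu,0)$ has horizontal projection of unit length. Alternatively, one can verify directly from the distributional definition that $D_{X_j}J=D_{x_j}J$, because $\pp_t J=0$. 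Therefore
\[
|D_\H J|(\C)=|a-b|\,\hau^{2n-1}_{\mathrm{eucl}}\bigl(\{z\in\C_H:\langle\mu,z\rangle=\tau\}\bigr),
\]
and $|D_\H J|(\C)$ coincides with the Euclidean total variation in $\R^{2n}$ of $a\chi_{E'}+b\chi_{\C_H\setminus E'}$ on $\C_H$.

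\textbf{Left-hand side.} Set $m\ceq\leb^{2n}(E')=\leb^{2n+1}(E_{\mu,\tau})$. Then $(J)_\C=am+b(1-m)$, and
\[
\int_\C|J-(J)_\C|\,d\leb^{2n+1}=2m(1-m)|a-b|.
\]

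With these computations, \eqref{eq_addclaim} becomes the Euclidean inequality $4m(1-m)\le \hau^{2n-1}(\{\langle\mu,z\rangle=\tau\}\cap\C_H)$ for a hyperplane cutting the unit cube. This is exactly the Euclidean statement proved in \cite{fms16}; it is the same ingredient used there for jump points, their Lemma~3.2-type claim. Its proof runs as follows. Let $g(s)\ceq\hau^{2n-1}(\{\langle\mu,z\rangle=s\}\cap\C_H)$. Then $m(\tau)=\int_\tau^\infty g$, and $g$ is log-concave, hence unimodal and symmetric about $0$, by Brunn–Minkowski. The inequality $4m(1-m)\le g(\tau)$ holds at $\tau=0$, since $m=\frac12$ and $g(0)\ge1$ because the central section of a unit cube has area at least $1$ (Hadwiger/Vaaler). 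For $\tau>0$ one compares derivatives: $\frac{d}{d\tau}[4m(1-m)]=-4g(1-2m)$, and log-concavity of $g$ controls the decay of $g$ relative to $m$. I would rely on \cite{fms16} for this one-dimensional inequality rather than reprove it; verifying that this citation applies verbatim is the main obstacle.

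\textbf{Equality case \eqref{eq_eqaux}.} For $\mu=e_1$ and $|\tau|\le\frac12$ we have $m=\frac12-\tau$, so
\[
2m(1-m)=2\Bigl(\tfrac12-\tau\Bigr)\Bigl(\tfrac12+\tau\Bigr)=\tfrac12(1-4\tau^2),
\]
which gives \eqref{eq_eqaux} directly. Incidentally, since here $g\equiv1$, this also exhibits the inequality as $1-4\tau^2\le1$ in this case.
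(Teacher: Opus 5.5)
Your reduction is the same as the paper's. $E_{\mu,\tau}$ is a vertical cylinder over $\pi_H(E_{\mu,\tau})\subset\C_H$, so $|D_\H J|(\C)$ equals $|a-b|$ times the Euclidean relative perimeter of $\pi_H(E_{\mu,\tau})$ in $\C_H$; your direct check that $D_{X_j}J=D_{x_j}J$ is fine here. The left-hand side is $2m(1-m)|a-b|$, and the equality case \eqref{eq_eqaux} is the same computation $m=\frac12-\tau$.

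The gap is in the only nontrivial step, the Euclidean inequality $4m(1-m)\le g(\tau)$. You neither prove it nor give a reference you have checked; you flag the citation of \cite{fms16} yourself as unverified. The route you sketch also cannot work on the grounds it invokes. Symmetry, log-concavity (even Brunn's concavity of $g^{1/(2n-1)}$) and $g(0)\ge 1$ together do not imply $g\ge 4m(1-m)$. For instance, $g(s)=(1-|s|)_+$ has all these properties when $2n=2$. Yet $m(\tfrac14)=\tfrac{9}{32}$ and $g(\tfrac14)=\tfrac34<\tfrac{207}{256}=4m(\tfrac14)\bigl(1-m(\tfrac14)\bigr)$. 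So any derivative comparison would have to use additional, cube-specific information about $g$, which you do not supply.

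The missing ingredient, and the one the paper uses, is the sharp relative isoperimetric inequality in the unit cube from \cite[Appendix]{abbf16}: $P(F;\C_H)\ge 4\leb^{2n}(F)\bigl(1-\leb^{2n}(F)\bigr)$ for every measurable $F\subset\C_H$. Equivalently, $\int_{\C_H}|v-(v)_{\C_H}|\,d\leb^{2n}\le\frac12|Dv|(\C_H)$ for $v\in\bv(\C_H)$. Applied to $F=\pi_H(E_{\mu,\tau})$, it gives \eqref{eq_addclaim} at once and removes any need to study hyperplane sections.

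Two minor points. The jump part should read $D_\H J=(a-b)\mu$ times surface measure; the sign is irrelevant for the total variation. The symmetry of $g$ comes from the central symmetry of $\C_H$, not from log-concavity.
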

\begin{proof}
Let $\lambda=\leb^{2n+1}(E_{\mu,\tau})$. Since $\leb^{2n}(\C_H)=\leb^1((-1/2,1/2))=1$, we have
\[
(J_{a,b,\mu,\tau})_\C=\lambda a+(1-\lambda)b,
\]
and therefore
\begin{equation}\label{eq_auxaux}
\int_\C |J_{a,b,\mu,\tau}-(J_{a,b,\mu,\tau})_\C|d\leb^{2n+1}=2\lambda(1-\lambda)|a-b|.
\end{equation}
Moreover,
\[
|D_\H J_{a,b,\mu,\tau}|(\C)=|a-b|P(E_{\mu,\tau};\C)=|a-b|P(\pi_H(E_{\mu,\tau});\mathcal C_H)
\]
where $P(E_{\mu,\tau};\C)$ (resp.\ $P(\pi_H(E_{\mu,\tau});\mathcal C_H)$) is the Euclidean perimeter in $\R^{2n+1}$ (resp. in $\R^{2n}$) and the first equality comes from \cite[Proposition 2.10]{scv14}. By the sharp relative isoperimetric inequality (see \cite[Appendix]{abbf16}) one has
$
4\lambda (1-\lambda) \leq P(\pi_H(E_{\mu,\tau});\C_H),
$
which proves \eqref{eq_addclaim}. We obtain \eqref{eq_eqaux} from \eqref{eq_auxaux} upon observing that if $\mu=e_1$ and $|\tau|\leq 1/2$ then $\lambda=1/2-\tau$.
\end{proof}

\section{Good functions with bounded variation}\label{sec_goodSBV}
In this section we introduce {\em good} $\sbv_\H$ functions (Definition~\ref{def_goodsbv}). The main result is Lemma~\ref{lem_comb}, where we collect the properties of $\sbv_\H$ relevant for our purposes; before that, however, we need the following quite standard result on $\sbv_\H$ functions.

\begin{lem}\label{lem:jumplocalcomparison}
Let $\Omega\subset \Hn$ be open, let $u\in\sbv_\H(\Omega)$ and let $H\subset\mathcal J_u$ be compact.  For every $\eta>0$ there exists a compact set $K\subset H$ such that
\[
 |D_\H^ju|(H\setminus K)<\eta
\]
and, for every $R>0$,
\begin{equation} \label{eq:limiteborel}
 \lim_{r\to 0} \sup\left\{ \frac{|D_\H^au|(p\cdot\delta_rA)}{r^{2n+1}}:p\in K,A\subset B(0,R)\text{ Borel} \right\}=0.
\end{equation}
Consequently, for every $\varepsilon>0$ and $R>0$ there exists $r_{\ve,R}>0$ such that
\begin{equation}\label{eq:stimasalto}
 |D_\H u|(p\cdot\delta_rA) \le |D_\H^ju|(p\cdot\delta_rA)+\varepsilon r^{2n+1}
\end{equation}
whenever $p\in K$, $0<r<r_{\varepsilon,R}$ and $A\subset B(0,R)$ is Borel.
\end{lem}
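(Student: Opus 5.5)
Since $u\in\sbv_\H(\Omega)$, we have $D_\H u=D^a_\H u+D^j_\H u$ with $D^a_\H u\ll\Leb$, while $|D^j_\H u|$ is concentrated on $\J_u$ and $\hau^{2n+1}\res\J_u$ is $\sigma$-finite. The plan is to show that $|D^a_\H u|$ has zero $(2n+1)$-dimensional upper density at $|D^j_\H u|$-a.e.\ point of $H$, to make this uniform on a large compact set by an Egorov-type argument, and then to absorb Borel sets $A\subset B(0,R)$ via $p\cdot\delta_rA\subset B(p,Rr)$.

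\textbf{Step 1 (pointwise density).} Set $\mu\ceq|D^a_\H u|$, a finite measure with $\mu\ll\Leb$. Consider the set
\[
Z_\lambda\ceq\Bigl\{p\in H:\limsup_{\rho\to0}\mu(B(p,\rho))/\rho^{2n+1}>\lambda\Bigr\}.
\]
The standard density theorem (Vitali covering in the doubling metric space $(\H^n,d)$) gives, for every Borel $E\subset Z_\lambda$,
\[
\lambda\,\s^{2n+1}(E)\le C\,\mu(V)
\]
for any open $V\supset E$. Since $H$ is compact we have $\Leb(H)$ finite. Moreover, $\J_u$ is countably $\H$-rectifiable, hence $\Leb(\J_u)=0$: either $\Leb\le C\hau^{2n+2}$ and $\hau^{2n+2}$ vanishes on $\sigma$-finite $\hau^{2n+1}$ sets, or use that $\J_u$ lies in a countable union of $C^1_\H$ hypersurfaces, which are $\Leb$-null. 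Thus $\mu(Z_\lambda)=0$. Choosing open sets $V\supset Z_\lambda$ with $\mu(V)$ arbitrarily small yields $\s^{2n+1}(Z_\lambda)=0$. Since $|D^j_\H u|\ll\hau^{2n+1}\res\J_u$ (this is the standard representation $D^j_\H u=(u^+-u^-)\nu\,\s^{2n+1}\res\J_u$ from~\cite{dv}, up to constants), we conclude $|D^j_\H u|(Z_\lambda)=0$ for all $\lambda>0$. Hence $f_\rho(p)\ceq\sup_{0<s\le\rho}\mu(B(p,s))/s^{2n+1}\to0$ for $|D^j_\H u|$-a.e.\ $p\in H$.

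\textbf{Step 2 (uniformity).} The functions $f_\rho$ are Borel: $p\mapsto\mu(B(p,s))$ is lower semicontinuous, and the supremum may be taken over rational $s$ together with $s=\rho$. They are also monotone in $\rho$. By Egorov's theorem applied to the finite measure $|D^j_\H u|\res H$, together with inner regularity, there is a compact set $K\subset H$ with $|D^j_\H u|(H\setminus K)<\eta$ on which $f_\rho\to0$ uniformly. For a Borel set $A\subset B(0,R)$, the left-invariance and homogeneity of $d$ give $p\cdot\delta_rA\subset B(p,Rr)$. Therefore
\[
\frac{\mu(p\cdot\delta_rA)}{r^{2n+1}}\le R^{2n+1}f_{Rr}(p)\longrightarrow0
\]
uniformly in $p\in K$, which proves~\eqref{eq:limiteborel}. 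Estimate~\eqref{eq:stimasalto} then follows at once from $|D_\H u|\le|D^a_\H u|+|D^j_\H u|$, which holds because $D^c_\H u=0$.

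\textbf{Main obstacle.} The delicate point is the justification in Step 1 that $|D^j_\H u|$ does not charge $\s^{2n+1}$-null sets. I expect to invoke the structure theorem of~\cite{dv}, namely $|D^j_\H u|=|u^+-u^-|\,\theta\,\s^{2n+1}\res\J_u$, rather than reprove it.
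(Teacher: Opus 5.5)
Your proposal is correct and follows essentially the same route as the paper. Both arguments use a covering argument to show that the $(2n+1)$-density of $|D^a_\H u|$ vanishes $\hau^{2n+1}$-a.e.\ on $H$ (using $|D^a_\H u|(H)=0$), transfer this to $|D^j_\H u|$-a.e.\ points via $|D^j_\H u|\ll\hau^{2n+1}\res\J_u$ from \cite{dv}, apply Egorov's theorem to get uniformity on a compact $K$, and conclude with the inclusion $p\cdot\delta_rA\subset B(p,Rr)$. The differences are cosmetic: you use outer regularity and the lower semicontinuous maximal-type functions $f_\rho$, whereas the paper uses shrinking neighbourhoods $H_\delta$ of $H$ and dyadic functions $h_k$ (asserted to be continuous), followed by a dyadic comparison of radii.
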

\begin{proof}
    Since $\leb ^{2n+1}(\J_u)=0$, also $|D_\H^a u|(H)=0$. We first show that 
    \begin{equation}\label{eq:limitepuntuale}
      \lim_{r\to 0}\frac{|D_\H^a u|(B(p,r))}{r^{2n+1}}=0, \quad \text{for $\hau^{2n+1}$-a.e.\ $p\in H$.}
    \end{equation}
    Define for every $a>0$ the set
    \[
    E_a \ceq \left\{p\in H: \limsup_{r\to 0}\frac{|D_\H^a u|(B(p,r))}{r^{2n+1}}>a\right\}.
    \]
    Notice that $E_a\subset E_b$ if $a>b$. If $E_a$ is empty for every $a>0$, then \eqref{eq:limitepuntuale} is proved for every $p\in H$. Otherwise we can assume that for every sufficiently small $a>0$, $E_a$ is non-empty. Let $\delta>0$ be sufficiently small so that $H_\delta\coloneqq\{x\in \Hn: d(x,H)<\delta\}\subset \Omega$. Choose for each $p\in E_a$ some $r_p\in (0,\delta)$ with the property that
    \[
    |D^a_\H u|(B(p,r_p))> ar_p^{2n+1}.
    \]
    The $5r$-covering Lemma provides a pairwise disjoint countable subfamily $\{B(p_i,r_{p_i}):i\in \mathbb N\}$ such that $E_a\subset \bigcup _{i\in \mathbb N} B(p_i,5r_{p_i})$. Hence
    \[
    \hau^{2n+1}_{10\delta} (E_a)\leq 10^{2n+1}\sum_{i\in \mathbb N} r_{p_i}^{2n+1}\leq \frac{10^{2n+1}}a\sum_{i\in \mathbb N} |D^a_\H u|(B(p_i,r_{p_i}))\leq \frac{10^{2n+1}}a |D^a_\H u|(H_\delta).
    \]
    Taking into account that $H$ is compact and that $|D^a_\H u|(H)=0$, letting $\delta\to 0$ we infer
    \[
    \hau^{2n+1}(E_a)=0, \quad \forall a>0.
    \]
    Hence 
    \[
    \hau^{2n+1}\left(\bigcup_{a\in \mathbb Q,a>0} E_a\right)=0.
    \]
    This proves \eqref{eq:limitepuntuale}. Since, by \cite[Theorem 1.7]{dv}, $ |D^j_\H u|\ll \hau^{2n+1}\res \J_u$, the limit \eqref{eq:limitepuntuale} also holds for $|D^j_\H u|$-a.e.\ $p\in H$.  For every $k\in \mathbb N$ define $h_k\colon \Omega \to \R$ by letting
    \[
    h_k(q) \ceq 2^{k(2n+1)}|D^a_\H u|(B(q,2^{-k})\cap \Omega).
    \]
    Since $\{p\mapsto |D_\H^a u|(B(p,r))\}$ is continuous, each $h_k$ is continuous and by \eqref{eq:limitepuntuale} we have
    \[
    \lim_{k \to +\infty} h_k(p)=0, \quad \text{ for $|D^j_\H u|$-a.e.\ $p\in H$.}
    \]
     By Egorov's Theorem, we can find $K\subset H$ such that $|D^j_\H u|(H\setminus K)<\eta$ and 
    \[
    \lim_{k \to +\infty}\ \sup_{p\in K} h_k(p)=0.
    \]
    Hence, if $2^{-(k+1)}<r\leq 2^{-k}$, then
    \[
    \frac{|D^a_\H u|(B(p,r))}{r^{2n+1}}\leq \frac{|D^a_\H u|(B(p,2^{-k}))}{2^{-(k+1)(2n+1)}}=2^{2n+1} h_k(p).
    \]
    This shows that
    \begin{equation}\label{eq:limiteuniforme}
    \lim_{r\to 0}\ \sup_{p\in K} \frac{|D^a_\H u|(B(p,r))}{r^{2n+1}}=0.
    \end{equation}
    Now let $R>0$ and let $A$ be a Borel subset of  $B(0,R)$. Then $p\cdot\delta_r A\subseteq B(p, rR)$. Therefore if $p\in K$, there exists $r_0>0$ such that $p\cdot \delta_r A\subseteq \Omega$ for all $r\in (0,r_0)$. Then the simple inequality
    \[
    \frac{|D^a_\H u|(p\cdot \delta_r A)}{r^{2n+1}}\leq R^{2n+1}\frac{|D^a_\H u|(B(p,rR))}{(rR)^{2n+1}},
    \]
    together with \eqref{eq:limiteuniforme} concludes the proof of \eqref{eq:limiteborel}. Inequality \eqref{eq:stimasalto} is an immediate consequence of \eqref{eq:limiteborel} and of the decomposition $   |D_\H u|=|D_\H^a u|+|D^j_\H u|$.
    \end{proof}

Inspired by the properties of the approximating functions in \cite[Theorem 1.4]{sbvx}, in what follows we will use a special class of $\sbv_\H$ functions, that we call \emph{good} $\sbv_\H$.

\begin{defi}\label{def_goodsbv}
    Let $\Omega \se \H^n$ be an open set. We say that a function $u \in \sbv_\H(\Omega)$ is a \emph{good} $\sbv_\H(\Omega)$ function if it satisfies the following conditions:
    \begin{enumerate}
        \item[(i)] $\J_u$ is compact and contained in a $C^1_\H$ hypersurface,
        \item[(ii)] $u \in C^\infty(\Omega \setminus \J_u)$.
    \end{enumerate}
\end{defi}

In the following lemma, which will be pivotal for the proof of our main result, we make use of the following convenient notation: given an open set $\Omega \se \Hn$, $u \in \sbv_\H(\Omega)$, $q \in \J_u$ and a horizontal normal $\nu_u(q)$ we define the function
\[
u_q^0 \ceq u^+(q)\chi_{H_{\nu_u(q)}^+}+u^-(q)\chi_{H_{\nu_u(q)}^-},
\]
where, for every $\nu \in \mathbb S^{2n-1}$, we define
\[
H_{\nu}^+ \ceq \{(z,t) \in \Hn: \langle \nu,z \rangle >0 \}, \qquad H_{\nu}^- \ceq \{(z,t) \in \Hn: \langle \nu,z \rangle <0 \}.
\]

\begin{lem}\label{lem_comb}
     Let $\Omega \se \Hn$ be an open set and $u$ be a \emph{good} $\sbv_\H(\Omega)$ function. For every $\eta>0$ and $0<\sigma<1$ there exist pairwise disjoint compact sets
    \[
    K_1,\dots,K_m \se \J_u, 
    \]
and a suitable orientation of $\nu_u$ such that, letting $K \ceq \bigcup_{i=1}^m K_i$, the following facts hold.
    \begin{enumerate}
    \item[(i)]  The maps $u^+,u^-,\nu_u$  are continuous on $K$.
    \item[(ii)] $|D_\H^ju|(\J_u \setminus K)<\eta$ and \eqref{eq:limiteborel} holds on $K$.
     \item[(iii)] For every $i=1,\dots,m$, there exist a rotation $U_i \in U(n)$, an open set $A_i\subset \W\ceq \{x_1=0\}$ and a  function $\varphi_i\colon A_i\to \R$, such that $K_i \se \Phi_{U_i}(\Gamma_i)$ where $\Gamma_i$ is the  $C^1_\H$-intrinsic graph of $\varphi_i$, namely
        \[
        \Gamma_i \ceq \{F_i(y)\ceq y \cdot \vp_i(y)e_1: y \in A_i \se \W \}
        \]
        and 
        \[
        \sup_{p \in K_i} |U_i^*\nu_u(p)-e_1|<\sigma.
        \]
    \item[(iv)]  If we define for every $p \in K_i$  and every $r>0$ the sets
        \begin{align*}
      &  \C^{+}_{r,U_i}(p) \ceq p \cdot (\delta_r \Phi_{U_i}(\{(z,t)\in \C:\langle U_i^*\nu_u(p),z \rangle>0\})),\\& \C^{-}_{r,U_i}(p) \ceq p\cdot (\delta_r \Phi_{U_i}(\{(z,t)\in \C:\langle U_i^*\nu_u(p),z \rangle<0\})),
      \end{align*}
        then
        \[
     \alpha(r) \ceq \sup_{\pm}   \sup_{1 \leq i \leq m}\sup_{p \in K_i}\fint_{\C^{\pm}_{r,U_i}(p) } |u-u^\pm(p)|d\leb^{2n+1} \xrightarrow{r \to 0}0.
        \]
         \item[(v)] For every bounded Borel set $B\subset\H^n$ such that $\leb^{2n+1}(B)>0$ and $\leb^{2n+1}(\pp B)=0$ one has
\[
\sup_{q\in K}\fint_B\left|u(q\cdot\delta_r x)-u_q^0(x)\right|dx\xrightarrow{r \to 0}0.
\]
\item[(vi)] Let $(q_k)_{k \in \N}$ be a sequence of points in $K_i$ for some $i=1,\dots,m$, with $q_k\to q\in K_i$, let $(r_k)_{k \in \N}$ be a sequence in $(0,+\infty)$ with $r_k\to 0$ and let $(\C_k)_{k \in \N}$ be a sequence of Heisenberg $1$-cubes converging in $L^1$ to some unit Heisenberg $1$-cube $\C_\infty$. Then
\[
\liminf_{k\to\infty} r_k^{-(2n+1)}|D_\H^j u|\bigl(q_k\cdot\delta_{r_k}(\C_k)\bigr)\geq|u^+(q)-u^-(q)|\,P_\H(H_{\nu_u(q)}^+,\C_\infty).
\]

\item[(vii)] For every $\ve>0$ there exists $r_\ve>0$ such that, if $0<r<r_\ve$, $U\in U(n)$, $p\in\H^n$ and $\C_{r,U}(p)\cap K\neq\emptyset$, then
\[
r^{2n+1}\fint_{\C_{r,U}(p)}|u-(u)_{\C_{r,U}(p)}|d\leb^{2n+1}\leq \frac12 |D_\H^j u|(\J_u\cap \C_{r,U}(p))+\varepsilon r^{2n+1}.
\]

\item[(viii)] There exists $r_0>0$  such that for every $r\in(0,r_0)$ and every $i=1,\dots, m$ we can find $N_i(r)\in \mathbb N$ and points $p_{i,1}^r,\dots, p_{i,N_i(r)}^r \in K_i$ such that, setting $\C_{i,h}^r\coloneqq \C_{r,U_i}(p_{i,h}^r)$, the family
\[
\mathcal F_r\ceq\bigcup_{i=1}^m\mathcal F_r^i,\qquad\text{where }\mathcal F_r^i \ceq  \{\C_{i,h}^r: h=1,\dots, N_i(r)\} 
\]
is pairwise disjoint, compactly contained in $\Omega$ and the following properties hold:
\begin{itemize}
\item[(a)] $\displaystyle\sup_{0<r<r_0} N_i(r)r^{2n+1}<+\infty;
$
\item[(b)]$\displaystyle\sum_{h=1}^{N_i(r)}
 |u^+(p_{i,h}^r)-u^-(p_{i,h}^r)| r^{2n+1}
 \ge
 (1-\sigma)|D_{\mathbb H}^j u|(K_i)$.
 \end{itemize}

   \end{enumerate}

\end{lem}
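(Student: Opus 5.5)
The plan is to build $K$ by successive restrictions, each removing a set of small $|D^j_\H u|$-measure, and then to derive (iv)--(viii) from the uniform convergences this provides.

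\textbf{Step 1: constructing $K$.} Since $\J_u$ is compact and contained in a $C^1_\H$ hypersurface $S$, the normal $\nu_S$ is continuous on $S$, and locally we may orient it consistently; we fix $\nu_u=\nu_S$ with this orientation.

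By \cite[Theorem 1.7]{dv}, $|D^j_\H u|=|u^+-u^-|\,\hau^{2n+1}\res\J_u$ up to a constant, and $u^\pm$ are Borel. By Lusin's theorem we find a compact set on which $u^\pm$ are continuous and which carries all but $\eta/3$ of $|D^j_\H u|$. Next we apply Lemma~\ref{lem:jumplocalcomparison} with $H$ equal to this compact set to get (ii).

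We then cover $\J_u$ by finitely many small patches. On each patch we choose $U_i\in U(n)$ with $U_i e_1$ equal to the normal at a reference point; continuity of $\nu_u$ gives $|U_i^*\nu_u-e_1|<\sigma$ on the patch. By the implicit function theorem for $C^1_\H$ hypersurfaces (Franchi--Serapioni--Serra Cassano), $\Phi_{U_i}^{-1}(S)$ is locally an intrinsic graph over $\W$ in direction $e_1$. Intersecting with disjoint compact pieces (discarding small boundary layers) yields $K_1,\dots,K_m$ and proves (i) and (iii).

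\textbf{Step 2: blow-up statements (iv)--(vi).} The key fact is that, at every $q\in S$, the rescaled sets $\delta_{1/r}(q^{-1}S)\cap B(0,R)$ converge in Hausdorff distance to the vertical hyperplane $\partial H^+_{\nu(q)}$, uniformly in $q\in K$. This follows from uniform intrinsic differentiability of $\varphi_i$ on compacts, using the continuity of $\nabla^{\varphi_i}\varphi_i$.

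Since $u$ is smooth off $\J_u$ and $\J_u\cap\Omega$ is compact, the one-sided approximate limits give, via continuity of $u^\pm$ on $K$ and an Egorov-type uniformization, the uniform convergence
\[
\sup_{q\in K}\fint_{B(0,R)}|u(q\cdot\delta_r x)-u^0_q(x)|\,dx\to0 .
\]
The cleanest route is to mimic the proof of Lemma~\ref{lem:jumplocalcomparison}: define continuous functions $h_k(q)$ equal to the blow-up defect at scale $2^{-k}$, and apply Egorov's theorem. This may shrink $K$ further, which is harmless. Items (iv) and (v) are then direct consequences, since the half-cubes and the set $B$ have $\leb^{2n+1}$-null boundaries.

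For (vi), we use lower semicontinuity of the total variation under the $L^1$ convergence $u(q_k\cdot\delta_{r_k}\cdot)\to u^0_q$, which follows from (v) with $q_k\to q$ and the continuity of $u^\pm,\nu_u$. Combined with \eqref{eq:limiteborel}, this shows that $D^a$ contributes nothing, so the liminf bound holds for $|D^j_\H u|$.

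\textbf{Step 3: (vii) and (viii).} For (vii) we argue by contradiction. Suppose there are sequences $r_k\to0$, $U_k$, $p_k$ violating the inequality, with cubes meeting $K$ at points $q_k$. Then $p_k=q_k\cdot\delta_{r_k}w_k$ with $|w_k|\le\mathfrak b$. Passing to subsequences, $q_k\to q$, $U_k\to U$, $w_k\to w$, and the rescaled cubes converge in $L^1$ to a unit cube $\C_\infty$.

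By (v), the left-hand side divided by $r_k^{2n+1}$ converges to $\int_{\C_\infty}|u^0_q-(u^0_q)_{\C_\infty}|$. By Lemma~\ref{lem_salti}, after translating and rotating, this is at most $\tfrac12|u^+-u^-|\,P_\H(H^+,\C_\infty)$. Here I must check that a translated and rotated cube cut by a vertical hyperplane reduces to $J_{a,b,\mu,\tau}$, using left-invariance. By (vi) the latter is at most $\liminf$ of the right-hand side, which gives the contradiction. I expect this compactness step, in particular handling cubes that contain but are not centered at jump points, to be the main obstacle.

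For (viii) we use a Vitali covering of $K_i$. Take a maximal $r$-separated family of points in $K_i$ (separation $\mathfrak b r$) so that the cubes $\C_{r,U_i}(p)$ are disjoint. Using (iii), the graph structure, and the blow-up, each cube carries $|D^j_\H u|$-mass $\approx|u^+-u^-|\,r^{2n+1}P_\H(H^+,\C)$, with $P_\H(H^+,\C)\ge 1$ when the normal is near $e_1$. Choosing a grid along the graph parametrization $A_i$ instead of a maximal separated set, the cubes tile a neighbourhood of $K_i$ up to an error of order $\sigma$, which gives (b). Part (a) follows from the Ahlfors regularity of $\hau^{2n+1}\res\Gamma_i$.
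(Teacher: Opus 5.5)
Your treatment of (i)--(vii) follows the paper's route: Lusin's theorem plus Lemma~\ref{lem:jumplocalcomparison}, the implicit function theorem for $C^1_\H$ hypersurfaces, blow-up plus Egorov, lower semicontinuity combined with \eqref{eq:limiteborel}, and a compactness argument contradicting Lemma~\ref{lem_salti}. The step you flag in (vii) is harmless. Since $\pi_H(w\cdot\Phi_U(z,t))=\pi_H(w)+Uz$, the function $u^0_q$ on $w\cdot\Phi_U(\C)$ pulls back to $J_{u^+(q),u^-(q),U^*\nu,\tau}$ with $\tau=-\langle\nu,\pi_H(w)\rangle$. Both the oscillation and $P_\H$ are invariant under left translations and $\Phi_U$.

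The genuine gap is (viii), which you essentially assert. A maximal $\mathfrak b r$-separated family loses a fixed dimensional factor and cannot produce the constant $1-\sigma$. Your remark that each cube carries mass $\approx|u^+-u^-|r^{2n+1}P_\H(H^+,\C)$ with $P_\H\ge1$ points the wrong way: it bounds $\sum_h|u^+(p_h)-u^-(p_h)|r^{2n+1}$ from \emph{above} by the jump mass near $K_i$. What (b) requires is that pairwise disjoint cubes centred in $K_i$ have footprints covering almost all of $K_i$. ``A grid along the graph parametrization'' does not deliver this without two ingredients you do not supply.

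First, the centres must lie in $K_i$, so the grid points must lie in the compact set $A_K=F^{-1}(K_i)\subset\W$, which may be nowhere dense. A compact set of positive measure can avoid a countable dense set, so an unshifted lattice can miss $A_K$ entirely for some $r$, and Riemann sums over a fixed grid need not converge. The paper chooses the shift $\theta_r\in P_{\lambda r}$ by averaging over translations (Fubini). This gives $\sum_{y\in\Lambda_{r,\theta_r}\cap A_K}J(F(y))\,\leb_\W(P_{\lambda r})\ge\int_{A_K}J\circ F\,d\leb_\W$. It then passes to $|D^j_\H u|(K_i)$ through the area formula for intrinsic graphs, losing only the factor $\langle\nu_u,e_1\rangle>1-\sigma^2/2$.

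Second, disjointness is not automatic in $\H^n$. Cubes centred at $F(y),F(y')$ with $|\widehat z-\widehat z'|_\infty\ge r$ are separated by their horizontal projections. When $\widehat z=\widehat z'$, however, the $t$-coordinates of $F(y)\cdot\delta_r\C$ are sheared by $\varphi(y)$ through the group law, so vertically adjacent cubes may overlap. The paper handles this with a dilated anisotropic lattice (spacing $\lambda r$ horizontally and $\lambda^2r^2$ vertically) and the little $\tfrac12$-H\"older continuity of $\varphi$ with constant $\beta=\sigma/(2n+1)$ at small scales. Overlapping cubes would then force $|\tau-\tau'|\le\frac{2+\beta}{2-\beta}r^2$. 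The paper chooses $\lambda$ with $\lambda^2>\frac{2+\beta}{2-\beta}$, which rules out overlaps, and $\lambda^{-(2n+1)}\ge\frac{1-\sigma}{1-\sigma^2/2}$, which keeps (b).

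This balance between lattice dilation, H\"older modulus and the target constant $1-\sigma$ is the core of (viii), and it is absent from your proposal. Your Ahlfors-regularity argument for (a) is fine once disjointness is in place.
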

\begin{proof}
Properties $(i)$ and the first part of $(ii)$ follow in the same fashion as in \cite[Lemma 4.2]{sbvx}, while the validity of \eqref{eq:limiteborel} is a consequence of Lemma~\ref{lem:jumplocalcomparison}. Since  $K\subset\J_u$ is contained in a $C^1_\H$ hypersurface, then $K$ can be covered by compact sets $K_1,\dots, K_m$ so that property $(iii)$ holds, see \cite{fssc01}. Here and in the sequel, we tacitly agree that $K$ can be replaced by a slightly smaller compact subset and that each $K_i$ can be made smaller and or further subdivided, so that also the integer $m$ can increase. 
With this {caveat}, properties $(iv)$ and $(v)$ follow by \cite[Proposition 2.26]{dv} and by Egorov's theorem.

We now prove $(vi)$. Up to passing to a subsequence, we can assume that the $\liminf$ in the statement is indeed a limit; if infinite, there is nothing to prove. Otherwise, since $\C_k$ is converging to $\C_\infty$, then for every open set $\C_\infty'\subset \subset \C_\infty$  one has $\C_\infty'\subset \C_k\subset B(0,\mathfrak b)$ (recall~\eqref{eq:cube-ball}) for every $k\in \N$ sufficiently large. 
Denote by $u_k\colon B(0,\mathfrak b)\to \R$ the function defined by
\[
u_k(x)=u(q_k\cdot \delta_{r_k}(x)).
\]
Notice that $u_k$ is well defined  for every sufficiently large $k$ because $K_i$ is compactly contained in $\Omega$. Define also the functions $u_{q_k}^0\colon \H^n\to \R$ and $u_q^0\colon \H^n\to \R$ by setting
\[
u_{q_k}^0=u^+(q_k)\chi_{H^+_{\nu_u(q_k)}}+u^-(q_k)\chi_{H^-_{\nu_u(q_k)}} \qquad \text{and} \qquad u_q^0=u^+(q)\chi_{H^+_{\nu_u(q)}}+u^-(q)\chi_{H^-_{\nu_u(q)}}.
\]
We claim that $u_k$ converges to $u_q^0$ in $L^1(\C_\infty')$. 
Clearly we have
\begin{equation}\label{eq_esamiorali}
\|u_k-u_q^0\|_{L^1(\C_\infty')}\leq \|u_k-u_{q_k}^0\|_{L^1(\C_\infty')}+\|u_{q_k}^0-u_q^0\|_{L^1(\C_\infty')}.
\end{equation}
We know by $(i)$ that $u^\pm(q_k)$ is converging to $u^\pm(q)$ as $k\to \infty$. Moreover, since $\nu_u(q_k)$ converges to $\nu_u(q)$, also the halfspaces $H^\pm_{\nu_u(q_k)}$ converge in $L^1_{\rm loc}(\H^n)$ to $H^\pm_{\nu_u(q)}$. Hence $u_{q_k}^0\to u_q^0$ in $L^1(\C_\infty)$ as $k\to \infty$. In order to prove that $u_k\to u_{q_k}^0$ in $L^1(\C_\infty')$ it is enough to apply $(v)$ with $B=\C_\infty'$. Then, the lower semicontinuity of the total variation gives 
\[
|D_\H u_q^0|(\C_\infty')\leq \liminf_{k \to +\infty} |D_\H u_k|(\C_\infty'),
\]
whence, with direct calculation
\begin{equation}\label{eq:semicontinuita}
|u^+(q)-u^-(q)|P_\H(H_{\nu_u(q)}^+; \C_\infty')\leq \liminf_{k \to +\infty} r_k^{-2n-1}|D_\H u|(q_k\cdot \delta_{r_k}(\C_\infty')).
\end{equation}
By Lemma~\ref{lem:jumplocalcomparison} and since \eqref{eq:limiteborel} (and thus~\eqref{eq:stimasalto}) holds on $K$, for any $\ve>0$, we can find $r_\ve>0$ such that
\[
|D_\H u| (x\cdot \delta_r(\C_k))\leq  |D^j_\H u|( x\cdot \delta_r (\C_k))+\ve r^{2n+1}, \qquad \forall r\in (0,r_\ve), \forall x\in K_i, \forall k\in \N. 
\]
Using the fact that $\C'_\infty\subset \C_k $ we can then apply the previous inequality to the right-hand side of \eqref{eq:semicontinuita} to get
\[
    |u^+(q)-u^-(q)|P_\H(H_{\nu_u(q)}^+; \C_\infty')\leq \lim_{k \to +\infty} r_k^{-2n-1}|D_\H^j u|(q_k\cdot \delta_{r_k}(\C_k))+\ve.
\]
The arbitrariness of $\C_\infty'$ and of $\varepsilon>0$ allows us to conclude the proof.

Let us prove $(vii)$: assume by contradiction that the statement fails. Then there exist $\ve_0>0$, radii $r_k \to 0$,  rotations $U_k\in U(n)$, points $p_k\in\H^n$ and Heisenberg $r_k$-cubes $\C_k \ceq \C_{r_k,U_k}(p_k)$ such that $\C_k\cap K\neq\emptyset$ 
and
\begin{equation}\label{eq_contradiction}
r_k^{2n+1} \fint_{\C_k}|u-(u)_{\C_k}|\,d\leb^{2n+1}>\frac12 |D_\H^j u|(\J_u\cap \C_k)+\varepsilon_0 r_k^{2n+1}.
\end{equation}
Choose $q_k\in \C_k\cap K$. 
Up to a subsequence every $q_k$ is contained in $K_{\bar\imath}$ for some fixed $\bar\imath$ and $q_k\to q\in K_{\bar\imath}$. 
For any large enough $k \in \N$ we let $u_k\colon B(0,\mathfrak b) \to \R$ by defining 
\[
u_k(x) \ceq u(q_k\cdot\delta_{r_k}x),
\qquad
\widetilde \C_k \ceq \delta_{1/r_k}(q_k^{-1} \cdot \C_k).
\]
Since $q_k\in \C_k$, the sets $\widetilde \C_k$ belong to a compact family of Heisenberg 1-cubes: their centers remain bounded and the rotations $U_k$ stay in a compact group. Hence, up to a subsequence, $\widetilde \C_k\to \C_\infty$ for some Heisenberg $1$-cube $\C_\infty$. Using $(v)$ and arguing as in~\eqref{eq_esamiorali} one finds
\[
u_k\xrightarrow{k \to +\infty} u_q^0\ceq u^+(q)\chi_{H_{\nu_u(q)}^+}+u^-(q)\chi_{H_{\nu_u(q)}^-}\qquad\text{in }L^1_{\loc}(B(0,\mathfrak b))
\]
whence
\begin{equation}\label{eq_osc}
\lim_{k \to +\infty}\fint_{\widetilde \C_k}|u_k-(u_k)_{\widetilde \C_k}|d\leb^{2n+1}=\fint_{\C_\infty}|u_q^0-(u_q^0)_{\C_\infty}|d\leb^{2n+1}.
\end{equation}
 Moreover, by $(vi)$,
\begin{equation}\label{eq_liminf}
\liminf_{k\to\infty} r_k^{-(2n+1)}|D_\H^j u|(\J_u\cap \C_k)\geq |u^+(q)-u^-(q)|P_\H(H_{\nu_u(q)}^+;\C_\infty).
\end{equation}
Dividing \eqref{eq_contradiction} by $r_k^{2n+1}$ and using \eqref{eq_osc} and \eqref{eq_liminf}, we obtain
\[
\fint_{\C_\infty}|u_q^0-(u_q^0)_{\C_\infty}|d\leb^{2n+1}\geq\frac12|u^+(q)-u^-(q)|P_\H(H_{\nu_u(q)}^+;\C_\infty)+\varepsilon_0.
\]
The above inequality contradicts Lemma~\ref{lem_salti} (when applied to $\C_\infty$), therefore $(vii)$ is proved. 

We now prove $(viii)$. Fix $i \in \{ 1,\dots,m \}$. We work in the coordinates obtained by applying the inverse $\Phi_{U_i}^{-1}$. More precisely, after replacing $\Omega, u,K_i$ with $\Phi_{U_i}^{-1}(\Omega), u\circ\Phi_{U_i},\Phi_{U_i}^{-1}(K_i)$, respectively, and suppressing   for the sake of brevity the index $i$ from $K_i,\Gamma_i, A_i, F_i,\varphi_i$ (recall statement $(iii)$), we
may assume that $U_i=\Id$ and $K\subset\Gamma$. Set
$ A_K \ceq F^{-1}(K)\subset\W$. 
The map $F\colon A\to\Gamma$ is a homeomorphism onto the intrinsic graph, hence $A_K$ is compact. Define
\[
 J(x)\coloneqq|u^+(x)-u^-(x)|,
 \qquad \forall x\in K.
\]
Using the area formula for $C^1_{\mathbb H}$ intrinsic graphs (see \cite[Theorem 6.5]{fssc01}) and  \cite[Theorem 1.7]{dv} one gets
\[
 |D_{\mathbb H}^j u|(K) = \int_{A_K} J(F(y)) \frac{1}{|\langle\nu_u(F(y)),e_1\rangle|}d\leb_\W(y),
\]
where $\leb_\W$ denotes the Lebesgue measure on $\W\equiv\R^{2n}$. Since $|\nu_u-e_1|<\sigma<1$ on $K$ and the orientation has been chosen so that
$\langle\nu_u,e_1\rangle>0$ on $K$, we have
\[
 1\leq \frac{1}{|\langle\nu_u(F(y)),e_1\rangle|} \leq \frac1{1-\tfrac{\sigma^2}2} \qquad\text{for all }y\in A_K
\]
because $|\nu_u|=|e_1|=1$, hence $\langle\nu_u,e_1\rangle=1-\frac12|\nu_u-e_1|^2 > 1-\frac{\sigma^2}{2}$.
Therefore 
\begin{equation}\label{eq_aaaa} 
\int_{A_K}J(F(y))d\leb_\W(y) \geq \left(1-\frac{\sigma^2}2\right)|D_{\mathbb H}^j u|(K).
\end{equation}
We write points of $\W$ as $ y=(\widehat z,\tau)\in\R^{2n-1}\times\R\equiv\W=\{x_1=0\}$.
Define for any $r>0$ the set $P_r\subseteq \mathbb W$ by setting
$
 P_{ r}\ceq[0, r)^{2n-1}\times[0,r^2)
$
and observe  that
$
 \leb_\W(P_{r})=r^{2n+1}.
$
Choose $\lambda \in \R$ such that 
\[
\sqrt{\frac{2+\frac{\sigma}{2n+1}}{2-\frac{\sigma}{2n+1}}}<\lambda <\left(\frac{1-\tfrac{\sigma^2}{2}}{1-\sigma}\right)^\frac{1}{2n+1}.
\]
Such a  choice  is always possible because for $0<\sigma<1$ one can check\footnote{The functions $g(\sigma)=\sqrt{\frac{2+\frac{\sigma}{2n+1}}{2-\frac{\sigma}{2n+1}}}$ and $h(\sigma)=\left(\frac{1-\sigma^2/2}{1-\sigma}\right)^\frac{1}{2n+1}$ are such that $g(0)=h(0)$ and $g'(\sigma)<\tfrac1{2n+1}\leq h'(\sigma)$ for any $\sigma\in(0,1)$.} that
\[
\sqrt{\frac{2+\frac{\sigma}{2n+1}}{2-\frac{\sigma}{2n+1}}}<\left(\frac{1-\tfrac{\sigma^2}{2}}{1-\sigma}\right)^\frac{1}{2n+1}.
\]
For every $r>0$ and $\theta\in P_{\lambda r}$, we define the lattice
\[
 \Lambda_{r,\theta} \ceq \theta + \{(\lambda r \widehat k,\lambda^2r^2k_t): \widehat k\in\Z^{2n-1}, k_t\in\Z\}\se\W.
\]
By Fubini's theorem,
\[
 \frac{1}{\leb_\W(P_{\lambda r})} \int_{P_{\lambda r}} \sum_{y\in\Lambda_{r,\theta}\cap A} J(F(y))\chi_{A_K}(y) \leb_\W(P_{\lambda r})d\theta =\int_{A_K}J(F(y))d\leb_\W(y).
\]
Hence there exists $\theta_r\in P_{\lambda r}$ such that
\[
 \sum_{y\in\Lambda_{r,\theta_r}\cap A_K} J(F(y))\leb_\W(P_{\lambda r}) \geq \int_{A_K}J(F(y))d\leb_\W(y).
\]
Since $A_K$ is compact and the lattice $\Lambda_{r,\theta_r}$ is discrete, $\Lambda_{r,\theta_r}\cap A_K$ is finite. We write 
\[
 \Lambda_{r,\theta_r}\cap A_K =\{y_1^r,\ldots,y_{N(r)}^r\}, \qquad p_h^r \ceq F(y_h^r).
\]
Then for $r>0$
\[
 \sum_{h=1}^{N(r)}J(p_h^r)r^{2n+1} = \lambda^{-(2n+1)} \sum_{h=1}^{N(r)} J(F(y_h^r))\leb_\W(P_{\lambda r})  \geq \lambda^{-(2n+1)} \int_{A_K}J(F(y))d\leb_\W(y).
\]
By the definition of $\lambda$ we have
\[
\lambda^{-(2n+1)} \geq \frac{1-\sigma}{1-\tfrac{\sigma^2}2}
\]
and, combining the previous considerations with \eqref{eq_aaaa}, we get
\[
 \sum_{h=1}^{N(r)} |u^+(p_h^r)-u^-(p_h^r)| r^{2n+1} \geq (1-\sigma)|D_{\H}^j u|(K),
\]
which is statement $(b)$.

The cardinality estimate $(a)$ follows  from the lattice structure. For every
$
y\in\Lambda_{r,\theta_r}\cap A_K
$
define the half-open lattice cell
$
Q_{r,y} \ceq y+P_{\lambda r}.
$
The sets $Q_{r,y}$ are pairwise disjoint, since the family
$
\{y+P_{\lambda r}:y\in\Lambda_{r,\theta_r}\}
$
is a paving of $\W$. Moreover,
\[
\mathcal L_\W(Q_{r,y})=\mathcal L_\W(P_{\lambda r})=\lambda^{2n+1}r^{2n+1}.
\]
Since $A_K$ is compact, the set
$
B_K \ceq A_K+\overline{P_\lambda}
$
is compact and has finite $\mathcal L_\W$-measure. For $0<r<1$ one has $P_{\lambda r}\subseteq P_\lambda$,  hence
\[
Q_{r,y}\subseteq B_K \qquad \text{for every } y\in\Lambda_{r,\theta_r}\cap A_K.
\]
It follows that
\[
N(r)\lambda^{2n+1}r^{2n+1}=\sum_{y\in\Lambda_{r,\theta_r}\cap A_K}\mathcal L_\W(Q_{r,y})\leq\mathcal L_\W(B_K).
\]
Consequently,
\[
N(r)r^{2n+1}\leq\lambda^{-(2n+1)}\mathcal L_\W(B_K)\eqqcolon C_K
\]
for every $0<r<1$. After possibly decreasing $r_0$, this proves statement $(a)$.

We aim to prove that the Heisenberg cubes $\C_h^r\coloneqq \C_{r,\Id}(p_h^r)$ are pairwise disjoint for every $r\in (0,r_0)$.\footnote{It is enough to prove the claim in the coordinates determined by $\Phi^{-1}_{U_i}$. Indeed, $\Phi_{U_i}$ is a bijective group automorphism and commutes with the homogeneous dilations. Therefore,
\[
\Phi_{U_i}\bigl(\C_{r,\Id}(p_h^r)\bigr)=
\Phi_{U_i}\bigl(p_h^r\cdot\delta_r(\C)\bigr)=
\Phi_{U_i}(p_h^r)\cdot
\delta_r\bigl(\Phi_{U_i}(\C)\bigr)=
\C_{r,U_i}\bigl(\Phi_{U_i}(p_h^r)\bigr).
\]
Consequently, the Heisenberg $r$-cubes $\C_{r,\Id}(p_h^r)$ are pairwise disjoint if and only if the corresponding Heisenberg $r$-cubes $\C_{r,U_i}\bigl(\Phi_{U_i}(p_h^r)\bigr)$ are pairwise disjoint. Since $\Phi_{U_i}(p_h^r)\in K_i$, the latter are precisely the Heisenberg cubes required in the statement for the index $i$.}
Assume by contradiction that there exists $i\neq j$ such that $\C_i^r\cap \C_j^r\neq\emptyset$. Then there exist
$
 (\xi_1,\widehat\xi,s), (\xi_1',\widehat\xi',s') \in \C\se \H^n \equiv \R \times \R^{2n-1} \times \R
$
such that
\begin{equation}\label{eq:intersectionequality}
p_i^r\cdot\delta_r(\xi_1,\widehat\xi,s) = p_j^r\cdot\delta_r(\xi_1',\widehat\xi',s').
\end{equation}
Write 
$
y_i^r=(\widehat z_i,\tau_i) \in \W \equiv \R^{2n-1}\times \R, \ y_j^r=(\widehat z_j,\tau_j)\in \W \equiv \R^{2n-1}\times \R
$
and recall that
\[
p_i^r=F(y_i^r)=y_i^r\cdot\vp(y_i^r)e_1, \qquad p_j^r=F(y_j^r)=y_j^r\cdot\vp(y_j^r)e_1.
\]
Then, writing each component of equality \eqref{eq:intersectionequality} and denoting by $(z_\ell)_{n+1}$ the $n+1$ component of $(0,\widehat z_\ell)$, one gets
\begin{equation}\label{eq_sistema}
\begin{cases}
    \vp(y_i^r)+r \xi_1=\vp(y_j^r)+r \xi_1',\\
    \widehat{z_i}+r\widehat{\xi}=\widehat{z_j}+r\widehat{\xi}',\\
    \tau_i+2(z_i)_{n+1}\vp(y_i^r)+r^2s+2\omega\bigg( \big(\vp(y_i^r),\widehat z_i\big),\big( r\xi_1,r\widehat\xi\big)\bigg)=\\
    \qquad=\tau_j+2(z_j)_{n+1}\vp(y_j^r)+r^2s'+2\omega\bigg( \big(\vp(y_j^r),\widehat z_j\big),\big( r\xi_1',r\widehat\xi'\big)\bigg).
\end{cases}
\end{equation}
Since $\widehat\xi,\widehat\xi'\in (-1/2,1/2)^{2n-1}$ belong to the unit cube, we get from the second equation of \eqref{eq_sistema}
\[
  |\widehat z_i-\widehat z_j|_{\infty}\leq r.
\]
On the other hand, both $y_i^r$ and $y_j^r$ belong to the lattice $\Lambda_{r,\theta_r}$, whose spacing in the $\widehat z$ variables is $\lambda r$, with $\lambda>1$. Therefore, if $\widehat z_i\neq\widehat z_j$, then
\[
 |\widehat z_i-\widehat z_j|_{\infty}\geq\lambda r>r,
\]
a contradiction. Hence $\widehat z_i=\widehat z_j$ and, again from the second equation of \eqref{eq_sistema}, we infer that $\widehat \xi=\widehat \xi'$. From the last equation of \eqref{eq_sistema} we obtain, since $\widehat z_i=\widehat z_j$ and $\widehat \xi =\widehat \xi'$, that
\[
\tau_i-\tau_j=r^2(s'-s)-2r(\vp(y_i^r)-\vp(y_j^r))\omega(e_1,(0,\widehat\xi))
\]
Since $s,s' \in (-1/2,1/2), \widehat \xi=\widehat \xi' \in (-1/2,1/2)^{2n-1}$ we have that $|\omega(e_1,(0,\widehat\xi))|\leq 1/2$ and $|s-s'| \leq 1$ and, consequently,
\begin{equation}\label{eq_01}
|\tau_i-\tau_j| \leq r^2+r|\vp(y_i^r)-\vp(y_j^r)|.
\end{equation}
Combining the latter with the first equation of \eqref{eq_sistema} and the fact that $\widehat z_i=\widehat z_j$ we infer
\[
|y_i^r-y_j^r|_{\R^{2n}}=|\tau_i-\tau_j| \leq r^2+r|\xi_1-\xi_1'|\leq r^2+r,
\]
and by the  {\em little} $\tfrac 12$-H\"older continuity of $\varphi$ (see~\cite[Theorem 1.3]{ascv06}) one can find $r_0>0$ such that for every $0<r<r_0$
\begin{equation}\label{eq_02bis}
|\vp(y_i^r)-\vp(y_j^r)| \leq \beta |y_i^r-y_j^r|_{\R^{2n}}^\frac{1}{2}=\beta|\tau_i-\tau_j|^\frac{1}{2},
\end{equation}
where $\beta =\beta(\sigma) \ceq \frac{\sigma}{2n+1}$. From \eqref{eq_01} we also obtain 
\[
|\tau_i -\tau_j| \leq r^2+ \frac{\beta r^2}{2}+\frac{|\vp(y_i^r)-\vp(y_j^r)|^2}{2\beta}
\]
which, combined  with \eqref{eq_02bis}, gives
\[
|\tau_i -\tau_j| \leq \left(1+\frac{\beta}{2}   \right)r^2+\beta\frac{|\tau_i-\tau_j|}{2},
\]
that is
\begin{equation}\label{eq_contr}
|\tau_i -\tau_j| \leq \left( \frac{2+\beta}{2-\beta}   \right)r^2.
\end{equation}
Since $y_i^r\neq y_j^r$ and $\widehat z_i=\widehat z_j$, we have $\tau_i\neq \tau_j$. Thus, the vertical spacing of the lattice gives
\[
\lambda^2 r^2 \leq |\tau_i-\tau_j| \leq \left( \frac{2+\beta}{2-\beta}   \right)r^2,
\]
which, recalling that $\beta=\frac{\sigma}{2n+1}$, contradicts the definition of $\lambda$. Therefore the Heisenberg cubes $\C_h^r$ are pairwise disjoint. Finally, since $K\subset \subset\Omega$, up to reducing $r_0$ we may assume that all the Heisenberg cubes $\C_h^r$ are compactly contained in $\Omega$. Repeating the above construction for every $i=1,\ldots,m$ proves the pairwise disjointness within each family $\mathcal F_r^i$. 

It remains only to ensure the disjointness between Heisenberg cubes associated with different compact sets. Since $K_1,\dots,K_m$ are finitely many pairwise disjoint compact sets,
after discarding the empty ones we have
$
d_0 \ceq \min_{i\neq j}\dist(K_i,K_j)>0.
$
By \eqref{eq:cube-ball}, every Heisenberg cube centered at a point of $K_i$ is contained in the ball of radius $\mathfrak b r$ centered at that point. Therefore, after possibly decreasing $r_0$ so that $2 \mathfrak br_0<d_0$, a Heisenberg cube centered in $K_i$ cannot intersect a Heisenberg cube centered in $K_j$ whenever $i\neq j$. Hence the entire family
$
\mathcal F_r \ceq \cup_{i=1}^m\mathcal F_r^i
$
is pairwise disjoint for every $0<r<r_0$, and the proof is concluded.
\end{proof}

The last preliminary result needed for our purposes is a Poincaré inequality. The following theorem in the general setting of Carnot-Carathéodory spaces is well known where John domains are considered in place of the cube $\C$. For a deeper study of Poincaré inequalities we refer the reader to \cite{HajlaszKoskela, MontiMorbidelli} and references therein. 

\begin{teo}\label{teo_poincare}
    Let $\Omega\subseteq \mathbb H^n$ be an open set. There exists a constant $C_P>0$ such that for every $u\in \bv_\H(\Omega)$, every $U \in U(n)$, $r>0$ and  $p \in \Omega$ such that $\C_{r,U}(p) \subset \subset \Omega$ one has
    \[
    r^{2n+1}\fint_{\C_{r,U}(p)}|u-(u)_{\C_{r,U}(p)}|d\leb^{2n+1} \leq  C_P |D_\H u|(\C_{ r,U}(p)).
    \]
 \end{teo}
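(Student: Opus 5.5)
The plan is to reduce the statement, by the symmetries of the setting, to a single Poincaré inequality on the unit cube $\C$, and then obtain that inequality from the known Poincaré inequality on John domains in Carnot--Carathéodory spaces.

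\emph{Reduction to the unit cube.} Set $v(x)\ceq u(T_{p,r,U}(x))$ for $x\in\C$. The map $T_{p,r,U}$ is a composition of a left translation, a dilation $\delta_r$ and the isometric automorphism $\Phi_U$. Its Jacobian is constant and equal to $r^{2n+2}$, so $\fint_{\C_{r,U}(p)}|u-(u)_{\C_{r,U}(p)}|\,d\leb^{2n+1}=\fint_\C|v-(v)_\C|\,d\leb^{2n+1}$. Left translations commute with the left-invariant fields $X_j,Y_j$. Moreover $\delta_r$ multiplies horizontal derivatives by $r$, and $\Phi_U$ acts on the horizontal gradient through $U$, which preserves Euclidean norms. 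Hence $|D_\H v|(\C)=r\cdot r^{-(2n+2)}|D_\H u|(\C_{r,U}(p))=r^{-(2n+1)}|D_\H u|(\C_{r,U}(p))$. The claim is therefore equivalent to
\[
\int_\C|v-(v)_\C|\,d\leb^{2n+1}\le C_P\,|D_\H v|(\C)\qquad\text{for all } v\in \bv_\H(\C),
\]
and the constant $C_P$ is then automatically independent of $\Omega$, $p$, $r$ and $U$. The hypothesis $\C_{r,U}(p)\subset\subset\Omega$ only ensures that $v\in\bv_\H(\C)$, with $|D_\H v|(\C)$ computed inside $\C$.

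\emph{The unit cube case.} I would first prove the inequality for $v\in C^\infty(\C)\cap\bv_\H(\C)$ and then extend it. The smooth case follows from the Poincaré inequality on John domains for Carnot--Carathéodory metrics, as in the cited works of Hajłasz--Koskela and Monti--Morbidelli. The main obstacle is to check that $\C$, the Euclidean unit cube, is a John domain for the distance $d$; I expect this to be the hardest step. I would take the center to be $0$ and construct horizontal curves from any point of $\C$ to $0$ that stay quantitatively inside $\C$. One option is to use the product structure: first move horizontally along segments $s\mapsto(sz,\cdot)$, correcting the $t$-coordinate with small loops. A cleaner option is to invoke known results: Monti--Morbidelli show that Euclidean-regular domains (in particular convex sets with suitable boundary behaviour, or boxes) are John in step-two groups. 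If that fails, I would fall back on a chaining argument. Cover $\C$ by Whitney-type families of cubes $\C_{\rho,\Id}(q)$ and apply the ball Poincaré inequality, which holds on balls of $\H^n$ with the standard $1$-Poincaré inequality, to balls $B(q,\mathfrak a\rho)\subset\C_{\rho,\Id}(q)\subset B(q,\tfrac{\mathfrak b}{2}\rho)$, using \eqref{eq:cube-ball}. The boundary-layer chaining then requires exactly the John property.

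\emph{Passage to general BV functions.} For general $v\in\bv_\H(\C)$, I would use the Anzellotti--Giaquinta-type approximation (available in $\bv_\H$, see \cite{fssc}). It gives $v_k\in C^\infty(\C)$ with $v_k\to v$ in $L^1(\C)$ and $|D_\H v_k|(\C)\to|D_\H v|(\C)$. Since $\C$ has finite measure, the means also converge: $(v_k)_\C\to(v)_\C$. The left-hand side therefore passes to the limit, which proves the unit cube inequality, and undoing the change of variables gives the statement.
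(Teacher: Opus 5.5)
Your proposal is correct and follows essentially the paper's route: rescale via $T_{p,r,U}$ to the unit cube $\C$ (with $|D_\H v|(\C)=r^{-(2n+1)}|D_\H u|(\C_{r,U}(p))$) and then apply the Poincar\'e inequality on John domains there. The differences are minor. The paper settles the John property of $\C$ by citing Greshnov's result that Euclidean cubes are uniform, hence John, in $\H^n$, rather than by constructing curves or relying on Monti--Morbidelli for boxes. It also applies the $\bv_\H$ Sobolev--Poincar\'e inequality with exponent $\frac{2n+2}{2n+1}$ directly, followed by H\"older, instead of your smooth approximation step.
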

 \begin{proof}
    The fact that Euclidean cubes are John domains in the Heisenberg groups is a consequence of \cite{Greshnov}\footnote{The author actually shows that such sets are uniform sets which in turn implies the John property.}. In particular, (see e.g.~\cite{MontiMorbidelli}) there exists $C_P>0$ such that for every $\tilde u\in \bv_\H(\C)$ one has
    \begin{equation}\label{eq:poincaremonti}
    \left(\int_{\C}|\tilde u-(\tilde u)_{\C}|^{\frac{2n+2}{2n+1}}d\leb^{2n+1}\right)^{\frac{2n+1}{2n+2}} \leq  C_P |D_\H \tilde u|(\C).
    \end{equation}
    Let $r>0$, $U\in U(n)$ and $p\in \Omega$ be such that $\C_{r,U}(p)\subset \Omega$ and let $u\in \bv_\H(\Omega)$. Define $\tilde u\colon \C\to \R$ by setting
    $
    \tilde u(z,t)\ceq u(p\cdot\delta_r(Uz,t)).
    $
    Then $\tilde u\in \bv_\H(\C)$ and 
    \[
    (\tilde u)_\C=    (u)_{\C_{r,U}(p)}
    \qquad\text{and}\qquad
    |D_\H \tilde u|(\C)=\frac{1}{r^{2n+1}} |D_\H u|(\C_{r,U}(p)).
    \]
    By combining the previous considerations with \eqref{eq:poincaremonti} and a standard H\"older inequality we are able to conclude:
    \[
    \begin{aligned}
    &r^{2n+1}\fint_{\C_{r,U}(p)} |u-(u)_{\C_{r,U}(p)}|d\leb^{2n+1}=r^{2n+1}\int_\C |\tilde u-(\tilde u)_\C|d\leb^{2n+1}\\
    \leq\  & r^{2n+1}\left(\int_ \C|\tilde u-(\tilde u)_\C|^{\frac{2n+2}{2n+1}}d\leb^{2n+1}\right)^{\frac{2n+1}{2n+2}}\leq r^{2n+1}C_P|D_\H\tilde u|(\C)=C_P|D_\H u|(\C_{r,U}(p)).
    \end{aligned}
    \]
 \end{proof}

\section{Proof of Theorem \texorpdfstring{\ref{teo_intromain}}{1.1}}\label{sec_dimostrazione}
In this section we prove our main result, Theorem \ref{teo_intromain}. First, in Subsection \ref{subsec_red}, we reduce the problem to the class of \emph{good} $\sbv_\H$ functions defined in Definition \ref{def_goodsbv} and then we prove an upper bound (Theorem \ref{teo_upper}, in Subsection \ref{subsec_upper}) and a lower bound (Theorem \ref{teo_lower}, in Subsection \ref{subsec_lower}) for the functional $k_r(\cdot)$ on such class.

\subsection{Reduction to good SBV functions}\label{subsec_red}

In this subsection we show that it is enough to prove Theorem \ref{teo_intromain} for the class of \emph{good} $\sbv_\H$ functions. For the purpose, we recall the pivotal approximation result proved in \cite{sbvx}.

\begin{teo}[{\cite[Theorem 1.4]{sbvx}}]\label{teo_goodapprox}
    Let $\Omega \se \H^n$ be an open set and $u \in \sbv_\H(\Omega)$. Then there exists a sequence $(u_k)_{k \in \N}$ of good $\sbv_\H(\Omega)$ functions such that 
\[
\nl u-u_k \nr_{\bv_\H(\Omega)}\xrightarrow{k \to +\infty}0.
\]
\end{teo}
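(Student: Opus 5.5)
The plan follows the Euclidean strategy of De Philippis--Fusco--Pratelli: first split off from $u$ a function $w$ that carries almost all of the jump part and has the required structure, then approximate the remainder $v\ceq u-w$ by a smooth function in the $\bv_\H$ norm. Fix $\eta>0$.

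\emph{Step 1 (selecting the jump).} Since $\J_u$ is countably $\H$-rectifiable and $|D^j_\H u|\ll\hau^{2n+1}\res\J_u$, I would argue as in Lemma~\ref{lem:jumplocalcomparison} and in items (i)--(iii) of Lemma~\ref{lem_comb}. Using inner regularity, Lusin's theorem, and the implicit function theorem for $C^1_\H$ hypersurfaces, this gives finitely many pairwise disjoint compact sets $K_1,\dots,K_m\se\J_u$ such that $|D^j_\H u|(\J_u\setminus K)<\eta$ with $K\ceq\bigcup_i K_i$. Each $K_i$ should lie on an intrinsic $C^1_\H$ graph $\Phi_{U_i}(\Gamma_i)$, and $u^\pm$, $\nu_u$ should be continuous on $K_i$. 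Because the $K_i$ are at positive distance from each other, I expect that a Whitney-type extension for intrinsic $C^1$ data (as in~\cite{dimarco2026whitneyextensiontheoremintrinsic}) allows the finitely many graph pieces to be joined into a single $C^1_\H$ hypersurface $S\supset K$.

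\emph{Step 2 (the jump carrier $w$).} Near each $K_i$ I would write $\Phi_{U_i}(\Gamma_i)=\pp E_i$ locally, with $E_i$ the intrinsic epigraph. Extend $u^+-u^-$ from $K_i$ to a continuous function on $S$ and smooth it in directions transversal to $S$, obtaining $\tilde J_i$, and let $\psi_i$ be a cut-off. Then set $w\ceq\sum_i\tilde J_i\psi_i\chi_{E_i}$. One checks that $w\in\sbv_\H(\Omega)$, that $w$ is smooth off $S$, and that
\[
D^j_\H w=(\tilde J\psi)\,\nu_S\,\hau^{2n+1}\res S\quad(\text{up to the normalising density}),\qquad D^j_\H w\res K=D^j_\H u\res K.
\]
The absolutely continuous part of $D_\H w$ does not need to be small, since it is absorbed into $v$. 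To keep the part $|D^j_\H w|(S\setminus K)$ below $\eta$, I would choose the supports of $\psi_i\tilde J_i$ inside a thin neighbourhood of $K$ on $S$. The remainder $v=u-w$ is then in $\sbv_\H(\Omega)$ with $|D^s_\H v|(\Omega)\le 2\eta$.

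\emph{Step 3 (smoothing the remainder).} I would perform a Meyers--Serrin construction on $v$, with a locally finite partition of unity $\{\zeta_k\}$ and mollifications $\rho_{\ve_k}*(\zeta_k v)$. The group convolution here is taken on the side that commutes with the left-invariant fields $X_j,Y_j$, so that $D_\H(\rho*f)=\rho*D_\H f$. Splitting $D_\H v=D^a_\H v+D^s_\H v$, the absolutely continuous part is recovered in $L^1$ as $\ve_k\to0$. The commutator terms $v\nabla_\H\zeta_k$ cancel up to an $L^1$-small error, exactly as in the classical proof. The singular part contributes at most $2|D^s_\H v|(\Omega)\le4\eta$. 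This yields $g\in C^\infty(\Omega)$ with $\|v-g\|_{\bv_\H}\le C\eta$. Then $u_\eta\ceq w+g$ satisfies $\|u-u_\eta\|_{\bv_\H}\le C\eta$ and is smooth off $S$.

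\emph{Main obstacle.} The delicate point is Step 2: arranging that $\J_{w+g}$ is \emph{compact}, rather than a relatively open piece of $S$ on which $\tilde J\psi\neq0$. My first attempt would be to let $\tilde J\psi$ vanish only on the boundary of a compact closure. I would choose $\psi$ so that $S\cap\{\psi\tilde J\neq0\}$ has compact closure $\tilde K\supset K$, and then redefine the target class relative to $\tilde K$. Alternatively, I would replace the cut-off by a sharp one, $\chi_{E_i\cap V_i}$ with $V_i$ an open set having smooth boundary transversal to $S$, and control the extra jump created on $\pp V_i$ by taking $|\tilde J|$ small there. The gluing of the pieces into one $C^1_\H$ hypersurface is the second nontrivial ingredient.
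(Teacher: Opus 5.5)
The paper does not prove this statement; it quotes it from \cite{sbvx}, so your proposal has to stand on its own. The architecture is reasonable: split off a jump carrier $w$ with $|D^s_\H(u-w)|(\Omega)$ small, then smooth $v=u-w$ by a Meyers--Serrin argument, which gives $\|v-g\|_{\bv_\H(\Omega)}\le 2|D^s_\H v|(\Omega)+\eta$. But there is a genuine gap exactly at the point you call the main obstacle, and neither remedy you suggest closes it.

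With smooth cut-offs, $\J_{w+g}=\J_w=S\cap\{\psi\tilde J\neq0\}$ is relatively open in $S$. At every point of its relative boundary, $w+g$ is discontinuous in every neighbourhood. Hence $\J_{w+g}$ is not compact, $\Omega\setminus\J_{w+g}$ is not open, and both (i) and (ii) of Definition~\ref{def_goodsbv} fail.

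Remedy (a) only proves a weaker statement, namely that the \emph{closure} of the jump set is compact. Remedy (b) fails as stated. The function $\tilde J\chi_{E_i\cap V_i}$ also jumps on $E_i\cap\pp V_i\cap\{\tilde J\neq0\}$, so the jump set leaves $S$ and acquires an edge along $S\cap\pp V_i$ (and possibly characteristic points of $\pp V_i$). There is then no reason for it to be closed or to lie in a $C^1_\H$ hypersurface. Taking $|\tilde J|$ small there controls the mass of this extra jump, not its location; taking $\tilde J=0$ there brings you back to (a).

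You could instead round the edge and make all of $\pp F_i$ the jump set. Then you need $\tilde J\neq0$ on the whole of $\pp F_i$, and $\pp F_i$ must be a compact $C^1_\H$ hypersurface, i.e.\ free of characteristic points. A smooth compact hypersurface homeomorphic to a sphere always has characteristic points (Poincar\'e--Hopf applied to the tangent field $J\nabla_\H g$), so this route also needs a real construction.

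What is missing is a carrier with three properties:
\begin{enumerate}
\item[(1)] its jump amplitude stays bounded away from zero up to the relative boundary of some compact $\tilde K\subset S$ with $\tilde K\supset K$; this is possible because $|u^+-u^-|$ is continuous and positive on the compact set $K$;
\item[(2)] it still has an approximate $\H$-jump at every point of $\pp_S\tilde K$;
\item[(3)] it is smooth on $\Omega\setminus\tilde K$ with $|D_\H w|(\Omega)<\infty$.
\end{enumerate}
For example, beyond $\pp_S\tilde K$ the passage between the two one-sided values can be confined to a cusp-shaped neighbourhood of $S$ whose density at the points of $\pp_S\tilde K$ tends to zero. In the Euclidean plane, an angular transition of width $\epsilon(r)\to0$ at distance $r$ from the endpoint of a segment costs only finite variation: the gradient, of size about $|u^+-u^-|/(r\epsilon(r))$, lives on an area of about $r\epsilon(r)\,dr$. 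Without some device of this kind your construction does not produce good $\sbv_\H$ functions.

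Two smaller points.
\begin{enumerate}
\item[(1)] In Step 2, a Tietze extension of $u^+-u^-$ is only continuous along $S$, and smoothing it in directions transversal to $S$ does not change this, so $w$ is not $C^\infty$ off $S$. Regularising tangentially at a scale comparable to $d(\cdot,S)$ only gives $|\nabla_\H\tilde J|\le C\,\omega(d)/d$, which is not integrable near $S$ for a general modulus of continuity $\omega$. The simple cure is to drop the exact identity $D^j_\H w\res K=D^j_\H u\res K$ and use a smooth $\bar J$ with $\sup_K|\bar J-(u^+-u^-)|$ small. Since $\hau^{2n+1}(K)<\infty$, this adds only a small term to $|D^s_\H(u-w)|(\Omega)$.
\item[(2)] No Whitney-type extension is needed to glue the pieces. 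Definition~\ref{def_ipersup} is local, so the union of the graph pieces intersected with open sets $V_i\supset K_i$ having pairwise disjoint closures is already a $C^1_\H$ hypersurface.
\end{enumerate}
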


\begin{teo}\label{teo_bastanolegood}
   Let $\Omega \se \H^n$ be an open set. If Theorem \ref{teo_intromain} holds for good $\sbv_\H(\Omega)$ functions then it also holds  for generic $\sbv_\H(\Omega)$ functions.
\end{teo}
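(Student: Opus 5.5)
The reduction combines an approximation argument with a Lipschitz-type bound on $k_r$. Write $\Phi(u)\ceq \frac14\int_\Omega|D^\ap_\H u|\,d\leb^{2n+1}+\frac12|D^j_\H u|(\J_u)$. Given $u\in\sbv_\H(\Omega)$, Theorem~\ref{teo_goodapprox} provides good $\sbv_\H(\Omega)$ functions $u_k$ with $\|u-u_k\|_{\bv_\H(\Omega)}\to0$. Two estimates will suffice:
\begin{itemize}
\item[(1)] a uniform-in-$r$ bound $|k_r(u)-k_r(v)|\le C_P|D_\H(u-v)|(\Omega)$;
\item[(2)] the continuity $|\Phi(u)-\Phi(u_k)|\to0$.
\end{itemize}
Granting these, write
\[
|k_r(u)-\Phi(u)|\le |k_r(u)-k_r(u_k)|+|k_r(u_k)-\Phi(u_k)|+|\Phi(u_k)-\Phi(u)|.
\]
Take $\limsup_{r\to0}$: the middle term vanishes by hypothesis, since the theorem holds for the good function $u_k$. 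Then let $k\to\infty$.

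For (1), note that the map $w\mapsto \fint_\Q|w-(w)_\Q|$ is a seminorm, hence subadditive. For any admissible family $\mathcal G_r$, Theorem~\ref{teo_poincare} then gives
\[
r^{2n+1}\sum_{\Q\in\mathcal G_r}\fint_\Q|u-(u)_\Q| \le r^{2n+1}\sum_{\Q\in\mathcal G_r}\fint_\Q|v-(v)_\Q| + C_P\sum_{\Q\in\mathcal G_r}|D_\H(u-v)|(\Q).
\]
The last sum is at most $C_P|D_\H(u-v)|(\Omega)$ because the cubes are disjoint. Taking suprema and exchanging $u$ and $v$ yields (1). One technical point needs care. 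Theorem~\ref{teo_poincare} is stated for $\C_{r,U}(p)\subset\subset\Omega$, whereas the cubes in $\mathcal G_r$ are only contained in $\Omega$. Since the Poincaré inequality is really applied on the cube itself, in rescaled form on $\C$, I would remark that $u\in\bv_\H(\Q)$ already suffices. Alternatively, one can approximate $\Q$ from inside by slightly shrunk concentric cubes and pass to the limit using the continuity of the integrals.

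For (2), the point is that $D^\ap_\H$ and $D^j_\H$ are mutually singular pieces of $D_\H$ for $\sbv_\H$ functions. Set $w\ceq u-u_k\in\sbv_\H(\Omega)$. Then $D_\H w$ has absolutely continuous part $(D^\ap_\H u-D^\ap_\H u_k)\leb^{2n+1}$ and singular part $D^j_\H u-D^j_\H u_k$; the Cantor parts vanish since $D^s_\H u$ and $D^s_\H u_k$ are both concentrated on $\hau^{2n+1}$-$\sigma$-finite sets (Definition~\ref{def_approxjump}). By uniqueness of the Lebesgue decomposition, which is linear,
\[
\int_\Omega|D^\ap_\H u-D^\ap_\H u_k|\,d\leb^{2n+1}+|D^j_\H u-D^j_\H u_k|(\Omega)=|D_\H w|(\Omega)\to0.
\]
Here $|D^j_\H u|(\J_u)=|D^s_\H u|(\Omega)$. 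The triangle inequality then gives $|\Phi(u)-\Phi(u_k)|\le\frac12|D_\H(u-u_k)|(\Omega)\to0$.

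The main obstacle is step (1), specifically the boundary issue in the Poincaré inequality and the fact that the constant $C_P$ must be uniform over all $r$, $U$ and $p$. The scaling argument in the proof of Theorem~\ref{teo_poincare} already provides this uniformity, so I expect only a brief justification to be needed. Note also that $k_r(u)$ is finite: it is bounded by $C_P|D_\H u|(\Omega)$ by the same argument with $v=0$.
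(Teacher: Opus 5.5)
Your proposal is correct and follows the paper's proof: you approximate $u$ in $\bv_\H(\Omega)$ by good functions via Theorem~\ref{teo_goodapprox}, then apply the Poincar\'e inequality of Theorem~\ref{teo_poincare} cube by cube to a disjoint family, which gives $|k_r(u)-k_r(u_k)|\le C_P|D_\H(u-u_k)|(\Omega)$ uniformly in $r$. Your step (2) and your remark that the Poincar\'e inequality only needs $\Q\subset\Omega$ make explicit what the paper leaves to the words ``immediate consequence''; step (2) is the continuity of the right-hand side under $\bv_\H$ convergence via uniqueness of the Lebesgue decomposition, and it does not actually require $u-u_k\in\sbv_\H(\Omega)$.
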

\begin{proof}
    Let $u \in \sbv_\H(\Omega)$ and consider a sequence $(u_k)_{k \in \N}$ of approximating good $\sbv_\H(\Omega)$ functions as per Theorem \ref{teo_goodapprox}. By assumption, for every $k \in \N$ 
\begin{equation}\label{eq_as1}
\lim_{r \to 0} k_r(u_k) = \frac{1}{4} \int_\Omega |D^\ap_\H u_k|d\leb^{2n+1}+\frac{1}{2}|D_\H^ju_k|(\J_{u_k}).
\end{equation}
Fix $r>0$. Given any Heisenberg $r$-cube $\Q \in \mathcal G_r$, we can estimate
\begin{align*}
\left| \fint_{\Q}| u-(u)_{\Q}|d\leb^{2n+1}- \fint_{\Q}| u_k-(u_k)_{\Q}|d\leb^{2n+1}\right|
& \leq 
\fint_{\Q} |(u-u_k)-(u-u_k)_{\Q}|d\leb^{2n+1}\\&\leq \frac{C_P}{r^{2n+1}}|D_\H(u-u_k)|(\Q),
\end{align*}
the last inequality coming from Theorem \ref{teo_poincare}. Thus, for every family $\mathcal G_r$ of disjoint Heisenberg $r$-cubes 
\[
r^{2n+1} \left| \sum_{\Q\in \mathcal G_r} \fint_{\Q} \left| u -(u)_{\Q}\right|d\leb^{2n+1}-\sum_{\Q\in \mathcal G_r} \fint_{\Q} \left| u_k -(u_k)_{\Q}\right|d\leb^{2n+1} \right| \leq C_P \sum_{\Q\in \mathcal G_r} |D_\H (u-u_k)|(\Q),
\]
hence 
\[
|k_r(u)-k_r(u_k)| \leq C_P  |D_\H (u-u_k)|(\Omega)\longrightarrow 0\qquad\text{as }k\to+\infty.
\]
The statement is now an immediate consequence of the previous inequality and~\eqref{eq_as1}.
\end{proof}

\subsection{Proof of the upper bound for good SBV functions}\label{subsec_upper}

\begin{lem}\label{lem:upper-smooth}
Let $\Omega \se \H^n$ be an open set and $u\in C^\infty(\Omega)\cap L^1(\Omega)$.  Then
\[
 \limsup_{r \to 0}k_r(u)
 \le\frac14\int_\Omega|\nabla_\H u|d\leb^{2n+1}.
\]
\end{lem}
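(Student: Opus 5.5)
The plan is to split every admissible family $\mathcal G_r$ into cubes close to a large compact set, where the Taylor expansion of Lemma~\ref{lem_taylor} gives the sharp constant $\frac14$, and the remaining cubes, which are controlled by the Poincaré inequality (Theorem~\ref{teo_poincare}). If $\int_\Omega|\nabla_\H u|\,d\leb^{2n+1}=+\infty$ there is nothing to prove, so assume it is finite.

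\textbf{Setup.} Fix $\varepsilon>0$ and choose a compact set $K\subset\Omega$ with $\int_{\Omega\setminus K}|\nabla_\H u|<\varepsilon$. Fix $\delta>0$ such that the closed $\delta$-neighbourhood $K_\delta\coloneqq\{q:\dist(q,K)\le\delta\}$ is a compact subset of $\Omega$. Apply Lemma~\ref{lem_taylor} with $\J_u=\emptyset$ and the compact set $K_\delta$; this gives constants $C_{K_\delta},\delta_{K_\delta}$. Take $r<\min\{\delta_{K_\delta},\delta/\mathfrak b\}$. For any family $\mathcal G_r$ split $\mathcal G_r=\mathcal G_r'\cup\mathcal G_r''$, where $\mathcal G_r'$ consists of the cubes meeting $K$.

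\textbf{Cubes meeting $K$.} By \eqref{eq:cube-ball}, each $\Q=\C_{r,U}(p)\in\mathcal G_r'$ has $p\in K_\delta$ and $\Q\subset K_{2\delta}$, assuming $\delta$ was chosen so that $K_{2\delta}$ is also compact in $\Omega$. By \eqref{eq:stimamediagradiente} and Lemma~\ref{lem_14xi}(i),
\[
r^{2n+1}\fint_\Q|u-(u)_\Q|\,d\leb^{2n+1}\le r^{2n+2}\Bigl(\tfrac14|\nabla_\H u(p)|+2C_{K_\delta}r\Bigr).
\]
Next replace $r^{2n+2}|\nabla_\H u(p)|$ by $\int_\Q|\nabla_\H u|$. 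By uniform continuity of $\nabla_\H u$ on $K_{2\delta}$, the error is at most $\omega(\mathfrak b r)\,r^{2n+2}$, where $\omega$ is a modulus of continuity. The cubes are disjoint and contained in $K_{2\delta}$, so their number is at most $\leb^{2n+1}(K_{2\delta})\,r^{-(2n+2)}$. Summing,
\[
\sum_{\Q\in\mathcal G_r'}r^{2n+1}\fint_\Q|u-(u)_\Q|\,d\leb^{2n+1}\le\frac14\int_\Omega|\nabla_\H u|\,d\leb^{2n+1}+\leb^{2n+1}(K_{2\delta})\bigl(2C_{K_\delta}r+\tfrac14\omega(\mathfrak b r)\bigr).
\]

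\textbf{Cubes not meeting $K$.} Each $\Q\in\mathcal G_r''$ lies in $\Omega\setminus K$. By Theorem~\ref{teo_poincare},
\[
r^{2n+1}\fint_\Q|u-(u)_\Q|\,d\leb^{2n+1}\le C_P\int_\Q|\nabla_\H u|\,d\leb^{2n+1}.
\]
The theorem asks for $\Q\subset\subset\Omega$, but here $\Q$ is only contained in $\Omega$. This is handled by applying it to the concentric cubes $\C_{sr,U}(p)$, $s<1$, which are compactly contained in $\Omega$, and letting $s\to1$. The constant $C_P$ does not depend on the cube, since it comes from the rescaled estimate \eqref{eq:poincaremonti}. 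By disjointness, the total contribution of $\mathcal G_r''$ is at most $C_P\int_{\Omega\setminus K}|\nabla_\H u|<C_P\varepsilon$.

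\textbf{Conclusion.} Adding the two bounds and taking the supremum over $\mathcal G_r$,
\[
k_r(u)\le\frac14\int_\Omega|\nabla_\H u|\,d\leb^{2n+1}+o(1)+C_P\varepsilon.
\]
Letting $r\to0$ and then $\varepsilon\to0$ gives the claim.

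The main point of care is the bookkeeping of uniformity. The Taylor remainder and the modulus of continuity must be uniform over centers in the compact set $K_\delta$ and over all $U\in U(n)$, which Lemma~\ref{lem_taylor} provides. The cardinality bound must hold for arbitrary disjoint families, which follows from the volume argument above.
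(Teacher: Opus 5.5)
Your proposal is correct and follows essentially the same route as the paper: a compact $K$ with small gradient tail, the Taylor estimate of Lemma~\ref{lem_taylor} combined with Lemma~\ref{lem_14xi}(i), a uniform modulus of continuity and a volume count for the cubes meeting $K$, and Theorem~\ref{teo_poincare} for the remaining cubes. Your closed neighbourhoods $K_\delta, K_{2\delta}$ simply replace the paper's open set $V$, and your shrinking argument (passing from $\Q\subset\Omega$ to $\Q\subset\subset\Omega$ in the Poincar\'e inequality) is a harmless extra precaution; the paper omits it because its proof of Theorem~\ref{teo_poincare} in fact only uses $\C_{r,U}(p)\subset\Omega$.
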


\begin{proof}
We  assume $\int_\Omega |\nabla_\H u|d\leb^{2n+1}<+\infty$, otherwise  there is nothing to prove.
Fix $\eta>0$ and choose a compact set $K\subset\Omega$ such that
\begin{equation}\label{eq:tail-gradient}
\int_{\Omega\setminus K}|\nabla_\H u|d\leb^{2n+1}<\eta.
\end{equation}
Choose a bounded open set $V$ such that $K\subset \subset V\subset \subset\Omega$ and let $\mathfrak b>0$ be the dimensional constant in
\eqref{eq:cube-ball}, so that
\[
\diam(\C_{r,U}(p))\leq \mathfrak br.
\]
Let $r_0>0$ be such that every Heisenberg $r$-cube intersecting $K$ is contained in $V$. Applying Lemmata~\ref{lem_14xi} and~\ref{lem_taylor} to the compact set $\overline V$ we obtain for a suitable $C_V>0$
\begin{equation}\label{eq:upper-single-cube}
\fint_{\C_{r,U}(p)}|u-(u)_{\C_{r,U}(p)}|d\leb^{2n+1}\leq\frac r4|\nabla_\H u(p)|+C_Vr^2
\end{equation}
for every $p\in\overline V$, $U\in U(n)$ and $0<r<r_0$. Define
\[
\omega_V(r)\ceq\sup\left\{|\nabla_\H u(x)-\nabla_\H u(y)|:x,y\in\overline V,\ d(x,y)\leq \mathfrak b r\right\}.
\]
Since $\nabla_\H u$ is uniformly continuous on $\overline V$, we have
$
\omega_V(r)\xrightarrow{r\to0}0.
$
If $\Q=\C_{r,U}(p)\subset V$, then
\[
\begin{aligned}
\left||\nabla_\H u(p)|\leb^{2n+1}(\Q)-\int_{\Q}|\nabla_\H u(x)|d\leb^{2n+1}(x)\right|
&\leq\int_{\Q}\left||\nabla_\H u(p)|-|\nabla_\H u(x)|\right|d\leb^{2n+1}(x)
\\&\leq\omega_V(r)\leb^{2n+1}(\Q).
\end{aligned}
\]
Multiplying \eqref{eq:upper-single-cube} by $r^{2n+1}$ and using the above inequality gives
\begin{equation}\label{eq:upper-single-cube-integral}
r^{2n+1}\fint_{\Q}|u-(u)_{\Q}|d\leb^{2n+1}
\leq
\frac14\int_{\Q}|\nabla_\H u|d\leb^{2n+1}+\left(\frac14\omega_V(r)+C_Vr\right)\leb^{2n+1}(\Q).
\end{equation}
Let now $\mathcal X_r$ be a pairwise disjoint family of Heisenberg $r$-cubes contained in $\Omega$. We subdivide it as
\[
\mathcal X_r^K
\ceq
\{\Q\in\mathcal X_r:\Q\cap K\neq\emptyset\}
\qquad \text{ and } \qquad
\mathcal X_r^0\ceq\{\Q\in\mathcal X_r:\Q\cap K=\emptyset\}.
\]
For $r<r_0$, every Heisenberg cube in $\mathcal X_r^K$ is contained in $V$. Summing \eqref{eq:upper-single-cube-integral} and using pairwise disjointness, we obtain
\begin{equation}\label{eq:inside-K}
\begin{aligned}
&r^{2n+1}\sum_{\Q\in\mathcal X_r^K}\fint_{\Q}|u-(u)_{\Q}|d\leb^{2n+1}\\
&\qquad\leq
\frac14\int_{\bigcup_{\Q\in\mathcal X_r^K}\Q}|\nabla_\H u|d\leb^{2n+1}+\left(\frac14\omega_V(r)+C_Vr\right)\leb^{2n+1}(V)
\\
&\qquad\leq\frac14\int_\Omega|\nabla_\H u|d\leb^{2n+1}+\left(\frac14\omega_V(r)+C_Vr\right)\leb^{2n+1}(V).
\end{aligned}
\end{equation}
For every Heisenberg cube $\Q\in\mathcal X_r^0$, the Poincaré inequality of Theorem~\ref{teo_poincare} gives
\[
r^{2n+1}\fint_{\Q}|u-(u)_{\Q}|d\leb^{2n+1}\leq C_P|D_\H u|(\Q)=C_P\int_{\Q}|\nabla_\H u|d\leb^{2n+1}.
\]
Since the Heisenberg cubes in $\mathcal X_r^0$ are pairwise disjoint and do not intersect $K$, by \eqref{eq:tail-gradient} we obtain
\begin{equation}\label{eq:outside-K}
r^{2n+1}\sum_{\Q\in\mathcal X_r^0}\fint_{\Q}|u-(u)_{\Q}|d\leb^{2n+1}\leq C_P\int_{\Omega\setminus K}|\nabla_\H u|d\leb^{2n+1}\leq C_P\eta.
\end{equation}
Combining \eqref{eq:inside-K} and \eqref{eq:outside-K}, and then taking the supremum over all admissible families of Heisenberg $r$-cubes $\mathcal X_r$, we find
\[
k_r(u)\leq\frac14\int_\Omega|\nabla_\H u|d\leb^{2n+1}+\left(\frac14\omega_V(r)+C_Vr\right)\leb^{2n+1}(V)+C_P\eta.
\]
Letting $r\to0$ gives
\[
\limsup_{r\to0}k_r(u)\leq\frac14\int_\Omega|\nabla_\H u|d\leb^{2n+1}+C_P\eta,
\]
and the statement follows from the arbitrariness of $\eta>0$.
\end{proof}

\begin{teo}\label{teo_upper}
Let $\Omega \se \Hn$ be an open set and $u$ be a good $\sbv_\H(\Omega)$ function. Then
\[
\limsup_{r\to0}k_r(u)
\leq
\frac14\int_\Omega|D^\ap_\H u|\,d\leb^{2n+1}
+
\frac12|D_\H^ju|(\Omega).
\]
\end{teo}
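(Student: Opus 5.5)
The plan is to split an arbitrary admissible family $\mathcal X_r$ of disjoint Heisenberg $r$-cubes into those meeting $K$ and those avoiding it. Here $K=\bigcup_i K_i\subset\J_u$ is the compact set given by Lemma~\ref{lem_comb} for a fixed $\eta>0$ and some $\sigma$ (only $(ii)$ and $(vii)$ will matter). We also use a thin neighbourhood of the whole jump set. Since $u$ is good, $\J_u$ is compact, $u\in C^\infty(\Omega\setminus\J_u)$, and $D_\H u=\nabla_\H u\,\leb^{2n+1}+D^j_\H u$ with $D^\ap_\H u=\nabla_\H u$ off $\J_u$.

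\textbf{Cubes meeting $K$.} For $r<r_\ve$, Lemma~\ref{lem_comb}$(vii)$ gives
\[
r^{2n+1}\fint_{\Q}|u-(u)_\Q|\le \tfrac12|D^j_\H u|(\J_u\cap\Q)+\ve r^{2n+1}.
\]
Summing over disjoint cubes bounds the total by $\frac12|D^j_\H u|(\Omega)+\ve\,\#\{\Q\in\mathcal X_r:\Q\cap K\ne\emptyset\}\,r^{2n+1}$. The main obstacle is controlling this count by a constant independent of $r$ and of the family. The cubes are disjoint, have measure $r^{2n+2}$, and lie within distance $\mathfrak b r$ of $K$, so the count is at most $r^{-(2n+2)}\leb^{2n+1}(\{d(\cdot,K)<\mathfrak b r\})$. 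Hence it suffices that $\leb^{2n+1}(\{d(\cdot,K)<\rho\})\le C\rho$ for small $\rho$, i.e.\ a finite upper Minkowski content for $K$. This holds because $K$ is a compact subset of a $C^1_\H$ hypersurface, which is locally Ahlfors $(2n+1)$-regular; a covering argument with balls of radius $\rho$ centred on $K$ gives the bound. With this, the contribution is at most $\frac12|D^j_\H u|(\Omega)+C_K\ve$.

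\textbf{Cubes avoiding $K$.} Choose a small open neighbourhood $W$ of $\J_u\setminus K$ with $|D_\H u|(W)\le|D^j_\H u|(\J_u\setminus K)+\eta<2\eta$. This is possible since $|D^a_\H u|(\J_u)=0$, and $\J_u\setminus K$ is a relatively open subset of $\J_u$, so outer regularity applies. Cubes contained in a slight enlargement of $W$ are handled by the Poincaré inequality, Theorem~\ref{teo_poincare}, giving a total contribution $\le C_P|D_\H u|(W')\le 2C_P\eta$. For this we take $W'\supset W$ still with small mass and $r$ small, so that every cube meeting $\J_u\setminus K$ but not $K$ lies in $W'$.

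The remaining cubes do not meet $\J_u$ at all. On them I will repeat the proof of Lemma~\ref{lem:upper-smooth} on the open set $\Omega\setminus\J_u$, where $u$ is smooth. Pick a compact $K'\subset\Omega\setminus\J_u$ carrying all but $\eta$ of $\int|\nabla_\H u|$. Cubes meeting $K'$ use the Taylor estimate, Lemma~\ref{lem_taylor}, together with Lemma~\ref{lem_14xi}$(i)$ and uniform continuity. These sum to $\frac14\int_{\Omega}|D^\ap_\H u|+o(1)$ by disjointness, and cubes avoiding $K'$ are bounded by $C_P\eta$.

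Adding the three bounds, letting $r\to0$, then $\ve\to0$ and $\eta\to0$, gives the claimed inequality. The one nonstandard step is the counting/Minkowski bound for cubes near $K$. If Ahlfors regularity is cumbersome to cite, I would instead derive it from Lemma~\ref{lem_comb}$(vi)$ and a compactness argument, which give $|D^j_\H u|(\Q)\gtrsim r^{2n+1}$ for cubes centred near $K$ wherever $|u^+-u^-|$ is bounded below on $K$. That lower bound would then let the counting be absorbed into $|D^j_\H u|$.
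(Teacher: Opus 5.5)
Your argument is correct, and it is organized differently from the paper's.

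\textbf{Decomposition.} The paper splits an admissible family using $\Omega_\delta=\{x\in\Omega:d(x,\J_u)>\delta,\ d(x,\partial\Omega)>\delta\}$. Cubes inside $\Omega_\delta$ go to Lemma~\ref{lem:upper-smooth} on $\Omega_\delta$, cubes meeting $K$ go to Lemma~\ref{lem_comb}(vii), and all remaining cubes are handled by Theorem~\ref{teo_poincare} on $\Omega\setminus\Omega_{2\delta}$, followed by $\delta\to0$. Your split by $K$, $\J_u\setminus K$ and $\Omega\setminus\J_u$ lets you apply Lemma~\ref{lem:upper-smooth} directly on the open set $\Omega\setminus\J_u$. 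This removes the parameter $\delta$ and the boundary layer near $\partial\Omega$.

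\textbf{Counting cubes near $K$.} The more substantive difference is the bound $\#\{\Q\in\mathcal X_r:\Q\cap K\neq\emptyset\}\le C r^{-(2n+1)}$. You obtain it from disjointness, $\leb^{2n+1}(\Q)=r^{2n+2}$, and the linear Minkowski bound $\leb^{2n+1}(\{d(\cdot,K)<\rho\})\le C\rho$. This is valid, and it gives a purely geometric count independent of $u$. However, it imports local Ahlfors $(2n+1)$-regularity of $C^1_\H$ hypersurfaces from outside the paper. The paper instead derives the density estimate \eqref{eq:uniform-jump-density} from Lemma~\ref{lem_comb}(vi) by compactness, and then uses bounded overlap of the balls $B(q_\Q,\mathfrak b r)$, so it stays within tools already proved. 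That is your fallback, and it needs no extra hypothesis. Indeed, $u^\pm$ are continuous on the compact set $K\subset\J_u$ and $u^+\neq u^-$ there, so $|u^+-u^-|$ is automatically bounded below on $K$.

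\textbf{Step that needs repair.} The cubes meeting $\J_u\setminus K$ but not $K$ require a different justification. The neighbourhood $W$ obtained from outer regularity plays no role. Moreover, a ``slight enlargement'' of $W$ in the metric sense need not have small mass. Nothing prevents $\J_u\setminus K$ from being dense in $\J_u$, since $K$ comes from Lusin/Egorov-type selections. In that case every set $\{d(\cdot,W)<\rho\}$ contains $\{d(\cdot,\J_u)<\rho\}$, whose $|D_\H u|$-mass is at least $|D^j_\H u|(\J_u)$.

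The fix is to remove $K$ rather than enlarge $W$. The union of the cubes meeting $\J_u$ but not $K$ lies in $\{d(\cdot,\J_u)\le\mathfrak b r\}\setminus K$. The $|D_\H u|$-measure of this set is at most $\int_{\{d(\cdot,\J_u)\le\mathfrak b r\}}|\nabla_\H u|\,d\leb^{2n+1}+|D^j_\H u|(\J_u\setminus K)$. The first term tends to $0$ because $\J_u$ is a compact, Lebesgue-null subset of $\Omega$, and the second term is $<\eta$. This is exactly the estimate the paper uses for $\mathcal X'''_r$.
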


\begin{proof}
Fix $\eta>0$ and let $K$ be as in Lemma \ref{lem_comb}. Fix $\delta>0$ and define 
    \[
    \Omega_\delta \ceq \{ x \in \Omega:d(x,\J_u)>\delta, d(x,\pp \Omega)>\delta\}.
    \]
    Clearly, $u$ is smooth on the open set $\Omega_\delta$. 
    Let $\mathcal X_r$ be a pairwise disjoint family of Heisenberg $r$-cubes contained in $\Omega$. Then define
    \[
   \mathcal X_r'\ceq \{ \Q \in \mathcal X_r: \Q \subset \Omega_\delta \}, \quad \mathcal X''_r\ceq \{ \Q \in \mathcal X_r \setminus \mathcal X'_r:\Q \cap K \neq \emptyset \}, \quad \mathcal X'''_r \ceq \mathcal X_r \setminus ( \mathcal X'_r \cup \mathcal X''_r )
    \]
By Lemma~\ref{lem:upper-smooth}, applied to the open set $\Omega_\delta$,
\begin{equation}\label{eq:upper-interior}
\limsup_{r\to0}\sup_{\mathcal X_r}r^{2n+1}\sum_{\Q\in\mathcal X_r'}\fint_{\Q}|u-(u)_{\Q}|\,d\leb^{2n+1}
\leq
\frac14\int_{\Omega_\delta}|\nabla_\H u|d\leb^{2n+1}.
\end{equation}

Fix $\ve>0$. By Lemma~\ref{lem_comb} $(vii)$, for all sufficiently small $r$
\begin{equation}\label{eq_estf2r}
r^{2n+1}\sum_{\Q\in\mathcal X_r''}\fint_{\Q}|u-(u)_{\Q}|d\leb^{2n+1}\leq\frac12|D_\H^ju|\left(\bigcup_{\Q\in\mathcal X_r''}\Q\right)+\ve r^{2n+1}\#\mathcal X_r''.
\end{equation}
We claim that there exists $C_K>0$, independent of $\mathcal X_r$, such that
\begin{equation}\label{eq_claimf2r}
\#\mathcal X_r''\leq C_Kr^{-(2n+1)}.
\end{equation}
In order to prove the latter, we first prove that there exist $c_K>0$ and $\rho_K>0$ such that
\begin{equation}\label{eq:uniform-jump-density}
|D_\H^ju|(q\cdot\delta_r\C)\geq c_Kr^{2n+1}
\end{equation}
for every $q\in K$ and every $0<r<\rho_K$. Otherwise, one could find $q_k\in K$ and $r_k\to 0$ such that
$
r_k^{-(2n+1)}|D_\H^ju|(q_k\cdot\delta_{r_k}\C)\to 0.
$
After passing to a subsequence, there exists $i$ such that $q_k\in K_i$ for every $k$ and $q_k\to q\in K_i$. Lemma~\ref{lem_comb} (vi), applied with $\C_k=\C$, would give
\[
\liminf_{k\to\infty}r_k^{-(2n+1)}|D_\H^ju|(q_k\cdot\delta_{r_k}\C)\geq|u^+(q)-u^-(q)|P_\H(H_{\nu_u(q)}^+;\C)>0,
\]
a contradiction. Hence, we proved \eqref{eq:uniform-jump-density}. 

Since every $\Q\in\mathcal X_r''$ intersects $K$ and $\diam(\Q)\leq \mathfrak b r$, one has
$
\cup_{\Q\in\mathcal X_r''}\Q\subset K_{\mathfrak b r}.
$
Now write $\Q=\C_{r,U_{\Q}}(p_{\Q})$ and choose $q_{\Q}\in\Q\cap K$. The balls
$B(p_{\Q},\mathfrak ar)$ are pairwise disjoint. Moreover, the inclusion
$
B(q_{\Q},\mathfrak br)\subset B(p_{\Q},2\mathfrak br)
$
implies that the family $\{B(q_{\Q},\mathfrak br):\Q\in\mathcal X_r''\}$ has uniformly bounded overlap, i.e., that there exists $M>0$, depending only on $n$, such that every point $q\in\H^n$ belongs at most to $M$ such balls. Using \eqref{eq:uniform-jump-density}, we obtain
\[
c_Kr^{2n+1}\#\mathcal X_r''\leq\sum_{\Q\in\mathcal X_r''}|D_\H^ju|(q_{\Q}\cdot\delta_r\C)\leq\sum_{\Q\in\mathcal X_r''}|D_\H^ju|(B(q_{\Q},\mathfrak br))\leq M|D_\H^ju|(\Omega),
\]
hence
\[
\#\mathcal X_r''\leq \frac{M|D_\H^ju|(\Omega)}{c_K} r^{-(2n+1)},
\]
proving \eqref{eq_claimf2r} with $C_K \ceq \frac{M|D_\H^ju|(\Omega)}{c_K} $. Consequently, from \eqref{eq_estf2r} we obtain
\begin{equation}\label{eq:upper-near-K}
r^{2n+1}
\sum_{\Q\in\mathcal X_r''}
\fint_{\Q}
|u-(u)_{\Q}|d\leb^{2n+1}
\leq
\frac12|D_\H^ju|(K_{\mathfrak br})
+\varepsilon C_K.
\end{equation}

Finally, let $\Q\in\mathcal X_r'''$. Since $\Q\not\subset\Omega_\delta$, there exists $x_{\Q}\in\Q$ such that
\[
\text{either}\quad
d(x_{\Q},\J_u)\leq\delta
\quad\text{or}\quad
d(x_{\Q},\partial\Omega)\leq\delta.
\]
Thus, if $r<\delta/\mathfrak b$, $\Q\subset\Omega\setminus\Omega_{2\delta}$. Moreover, $\Q\cap K=\emptyset$. By Theorem~\ref{teo_poincare} and pairwise disjointedness
\begin{equation}\label{eq:upper-remainder}
\begin{aligned}
r^{2n+1}\sum_{\Q\in\mathcal X_r'''}\fint_{\Q} |u-(u)_{\Q}|d\leb^{2n+1}
&\leq C_P|D_\H u|\left(\bigcup_{\Q\in\mathcal X_r'''}\Q \right)
\\&\leq C_P\left(\int_{\Omega\setminus\Omega_{2\delta}}|\nabla_\H u|\,d\leb^{2n+1}+|D_\H^ju|(\J_u\setminus K)\right).
\end{aligned}
\end{equation}
Taking the supremum over all admissible families of Heisenberg $r$-cubes, combining Lemma~\ref{lem_comb} (ii), \eqref{eq:upper-interior}, \eqref{eq:upper-near-K} and \eqref{eq:upper-remainder} and then letting $r\to0$ one finds
\[
\limsup_{r\to0}k_r(u) \leq \frac14 \int_{\Omega_\delta}|\nabla_\H u|\,d\leb^{2n+1}+\frac12|D_\H^ju|(K)+\varepsilon C_K+C_P\left( \int_{\Omega\setminus\Omega_{2\delta}} |\nabla_\H u|\,d\leb^{2n+1} +\eta \right).
\]
We first let $\ve \to 0$. Since $\J_u$ is compact, as $\delta\to 0$ one has $\Omega_\delta\to\Omega\setminus\J_u$ and $\Omega\setminus\Omega_{2\delta}\to \J_u$; using $\leb^{2n+1}(\J_u)=0$ we eventually find
\[
\limsup_{r\to0}k_r(u) \leq \frac14 \int_\Omega|D^\ap_\H u|\,d\leb^{2n+1}+\frac12|D_\H^ju|(K)+C_P\eta,
\]
and the statement follows from the arbitrariness of $\eta$.
\end{proof}

\subsection{Proof of the lower bound for good SBV functions}\label{subsec_lower}

\begin{lem}\label{lem_ar}
Let $\Omega \subseteq \mathbb H^n$ be an open set and let
$u$ be a good $\sbv_\H(\Omega)$ function. Fix a bounded open set
$V \subset\subset \Omega$ and, for every $\delta>0$, define
       \[
    V_\delta \ceq \{x \in V: d(x,\J_u)>\delta, d(x,\pp V)>\delta\}.
    \] 
For every $\delta,\tau>0$ and every $0<\rho<\frac{1}{2\sqrt{2n}}$, there exists $r_0>0$ such that, for every $r\in(0,r_0)$ there exists a family of Heisenberg $r$-cubes $\mathcal A_r \se \mathcal T_r$ with the following properties:
\begin{enumerate}
    \item[(i)] $\mathcal A_r$ is finite;
    \item[(ii)] the Heisenberg cubes in $\mathcal A_r$ are pairwise disjoint;
    \item[(iii)] every $\Q\in\mathcal A_r$ satisfies $\Q\subseteq V_\delta$.
\end{enumerate}
Moreover,
   \begin{equation}\label{eq_claimar}
   \begin{split}
    &\liminf_{r \to 0} r^{2n+1}\sum_{\Q \in \mathcal A_r}\fint_{\Q} |u-(u)_{\Q}|d\leb^{2n+1}\\
    \geq\ &\left( \frac{1}{4}-\frac{\sqrt{2n}}{2}\rho \right) \left(  \int_{V_\delta} |\nabla_\H u|d\leb^{2n+1}-\tau\leb^{2n+1}(V)                       \right).
    \end{split}
   \end{equation}
\end{lem}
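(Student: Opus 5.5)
We construct $\mathcal A_r$ from rotated pavings, adapting each rotation to the local direction of $\nabla_\H u$, and then estimate every cube from below by Lemma~\ref{lem_taylor} together with Lemma~\ref{lem_auxacl}.

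\emph{Partition.} Since $\overline{V_\delta}\subset V\setminus \J_u$ is compact and $u$ is smooth there, $\nabla_\H u$ is uniformly continuous on a compact neighbourhood $W$ of $\overline{V_\delta}$ contained in $\Omega\setminus\J_u$. Cover $\mathbb S^{2n-1}$ by finitely many sets $S_1,\dots,S_M$ of diameter less than $\rho/2$, fix $\eta_j\in S_j$, and choose $U_j\in U(n)$ with $U_je_1=\eta_j$; this is possible because $U(n)$ acts transitively on the sphere. Set
\[
E_0\ceq\{x\in V_\delta:|\nabla_\H u(x)|\le\tau/2\},\qquad E_j\ceq\{x\in V_\delta\setminus E_0:\nabla_\H u/|\nabla_\H u|\in S_j\}\setminus\textstyle\bigcup_{l<j}E_l .
\]
These sets are only Borel. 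By inner regularity we find disjoint open sets $A_j\subset\subset V_\delta$ (for instance, slight open enlargements of compact subsets $F_j\subset E_j$, with the $A_j$ kept disjoint because the $F_j$ are at positive distance) such that $\int_{E_j\setminus A_j}|\nabla_\H u|$ is small and, on $A_j$, both $|\nabla_\H u|>\tau/4$ and $|\nabla_\H u/|\nabla_\H u|-\eta_j|<\rho$ hold. Slightly enlarging $F_j$ preserves these properties by continuity. Then we define
\[
\mathcal A_r\ceq\bigcup_{j=1}^M\{\Q\in\mathcal P_{r,U_j}:\Q\subset A_j\}.
\]
Properties (i)--(iii) follow immediately: finiteness comes from boundedness, disjointness holds inside each paving and across different $j$ because the $A_j$ are disjoint, and $\Q\subset A_j\subset V_\delta$.

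\emph{Per-cube estimate.} Fix $\Q=\C_{r,U_j}(p)\subset A_j$, so that $p\in A_j$. Applying \eqref{eq:stimamediagradiente} with compact set $\overline{V_\delta}$ and then Lemma~\ref{lem_auxacl} gives
\[
\fint_\Q|u-(u)_\Q|\ge r\Bigl(\tfrac14-\tfrac{\sqrt{2n}}2\rho\Bigr)|\nabla_\H u(p)|-2C r^2 .
\]
Multiplying by $r^{2n+1}$ and using $\leb^{2n+1}(\Q)=r^{2n+2}$ turns the leading term into $(\tfrac14-\tfrac{\sqrt{2n}}{2}\rho)|\nabla_\H u(p)|\leb^{2n+1}(\Q)$. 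Uniform continuity then replaces $|\nabla_\H u(p)|\leb^{2n+1}(\Q)$ by $\int_\Q|\nabla_\H u|$, with an error $\omega(\mathfrak b r)\leb^{2n+1}(\Q)$. Summing over the disjoint cubes, the errors total at most $(\omega(\mathfrak br)+2Cr)\leb^{2n+1}(V)$, which vanishes as $r\to0$. The constant $\tfrac14-\tfrac{\sqrt{2n}}2\rho$ is positive because $\rho<\tfrac1{2\sqrt{2n}}$.

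\emph{Covering.} Lemma~\ref{lem:pavage} applied to each bounded open $A_j$ (only finitely many rotations occur, so uniformity in $U$ is not even needed) gives $\leb^{2n+1}(A_j\setminus (A_j)_{r,U_j})\to0$. Since $|\nabla_\H u|$ is bounded on $\overline{V_\delta}$, the integral of $|\nabla_\H u|$ over the covered set tends to $\int_{\bigcup A_j}|\nabla_\H u|$. The remaining mass consists of $\int_{E_0}|\nabla_\H u|\le\frac\tau2\leb^{2n+1}(V)$ plus the small inner-regularity errors, which we arrange to be at most $\frac\tau2\leb^{2n+1}(V)$. This yields \eqref{eq_claimar}.

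The main obstacle is the construction of the open pieces $A_j$ on which the direction of $\nabla_\H u$ is $\rho$-close to $\eta_j$ while they remain pairwise disjoint and capture almost all of the gradient mass. I would handle it through the compact-subsets-and-small-enlargements argument sketched above; for this to work, the discarded set $E_0$, where $|\nabla_\H u|$ is small, is essential.
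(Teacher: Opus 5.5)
Your proof is correct and follows the paper's argument. Both use rotated pavings $\mathcal P_{r,U_j}$ restricted to finitely many disjoint bounded open sets on which $\nabla_\H u/|\nabla_\H u|$ is $\rho$-close to $U_je_1$, the per-cube lower bound from Lemmata~\ref{lem_taylor} and~\ref{lem_auxacl}, uniform continuity of $\nabla_\H u$ on $\overline{V_\delta}$, and Lemma~\ref{lem:pavage}.

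The only difference is how the open sets are built. The paper chooses slab thresholds that are not atoms of the distributions of the components of $g=\nabla_\H u/|\nabla_\H u|$ on $W_\tau=\{x\in V_\delta:|\nabla_\H u(x)|>\tau\}$, so the sets $g^{-1}(S_j)$ are directly open, disjoint, and cover $W_\tau$ up to a null set. Your inner-regularity-and-enlargement device does the same job. Incidentally, the cutoff $E_0$ is not essential as you claim: only the zero set of $\nabla_\H u$ has to be discarded, since $|\nabla_\H u|$ has a positive minimum on each compact $F_j$.
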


\begin{proof}
    Since $u\in C^\infty(\Omega\setminus \J_u)$, we consider the open set $ W_\tau \ceq \{x \in V_\delta: |\nabla_\H u(x)|>\tau \}$ and the continuous function $g:W_\tau\to\mathbb S^{2n-1}$ defined by $g(x) \ceq {\nabla_\H u(x)}/{|\nabla_\H u(x)|}$.
For every $1 \leq h \leq 2n$, the finite measure 
    \[
    B \longrightarrow \leb^{2n+1}(\{x \in W_\tau: g_h(x) \in B\})
    \]
has at most countably many atoms. Therefore, we can choose finitely many real numbers 
\[
s_{h,0}<-1<s_{h,1}<\dots<s_{h,M_h-1}<1<s_{h,M_{h}}
\]
so that every $s_{h,\ell}$ is not an atom and $s_{h,\ell}-s_{h,\ell-1}< \frac{\rho}{\sqrt{2n}}$. Define $S_{h,\ell} \ceq \{ \nu \in \mathbb S ^{2n-1}: s_{h,\ell-1}<\nu_h< s_{h,\ell}  \}$ 
and  the sets $S_j$ as all possible nonempty intersections of the type $S_{1,\ell_1} \cap S_{2,\ell_2}  \cap \cdots \cap S_{2n,\ell_{2n}}$. Therefore we obtain finitely many nonempty, relatively open, pairwise disjoint sets $S_1,\dots,S_L$ such that every $S_j$ satisfies $\operatorname{diam}(S_j)<\rho$
and, because all the $s_{h,\ell}$ are not atoms, one has 
\begin{equation}\label{eq_est3}
\leb^{2n+1} \left( W_\tau\setminus \bigcup_{j=1}^L g^{-1}(S_j)     \right)=0.
\end{equation}

We define $A_j \ceq g^{-1}(S_j)$. The sets $\{A_j\}_{1 \leq j \leq L}$ are pairwise disjoint, bounded and open, and they cover $W_\tau$ up to a $\leb^{2n+1}$-negligible set. For every $1 \leq j\leq L$ we choose $\mu_j \in S_j$ and $U_j \in U(n)$ such that $U_je_1=\mu_j$. Recalling the notation $\mathcal P_{r,U}$ introduced in Remark~\ref{rem_mathcalP}, define
\[
\mathcal{A}_{r,j}\coloneqq\{ \Q \in\mathcal P_{r,U_j}: \Q \subset A_j \}\qquad\text{and} \qquad \mathcal A_r \coloneqq \bigcup_{j=1}^L \mathcal{A}_{r,j}.
\]
Since each $A_j$ is bounded and every tile in $\mathcal P_{r,U_j}$ has Lebesgue measure $r^{2n+2}$, the family $\mathcal A_{r,j}$ is finite. Moreover, the Heisenberg cubes belonging to $\mathcal A_{r,j}$ are pairwise disjoint, since $\mathcal P_{r,U_j}$ is a paving of $\mathbb H^n$. If $j\neq k$, every Heisenberg cube in $\mathcal A_{r,j}$ is contained in $A_j$, whereas every Heisenberg cube in $\mathcal A_{r,k}$ is contained in $A_k$. Since the sets $A_1,\ldots,A_L$ are pairwise disjoint, Heisenberg cubes belonging to different families cannot intersect. Consequently, $\mathcal A_r$ is a finite pairwise disjoint subfamily of $\mathcal T_r$. Finally, since $A_j\subseteq W_\tau\subseteq V_\delta$ for every $j=1,\ldots,L$, every Heisenberg cube in $\mathcal A_r$ is contained in $V_\delta$.

We are left to prove \eqref{eq_claimar}. Let $\Q=\C_{r,U_j}(p)\in\mathcal A_{r,j}$. Since $p\in\Q\subset A_j$,  $g(p)\in S_j$ and $|g(p)-\mu_j|<\rho$. Lemmata \ref{lem_auxacl} and \ref{lem_taylor} give
\begin{equation}\label{eq_est1}
\begin{split}
r^{2n+1}\fint_{\Q} |u-(u)_{\Q}|
&\geq r^{2n+2} \int_{\C} |\langle \nabla_\H u(p),U_jz \rangle|dzdt-2C_\delta r^{2n+3}\\
&\geq r^{2n+2}\left( \frac{1}{4}-\frac{\sqrt{2n}}{2}\rho \right) |\nabla_\H u(p)|-2C_\delta r^{2n+3}.
\end{split}
\end{equation}
Define $\omega_\delta(s) \ceq \sup \{ |\nabla_\H u(x)-\nabla_\H u(y)|: x,y \in \overline{V_\delta},\ d(x,y) \leq s    \}$; clearly, $\omega_\delta(s) \xrightarrow{s \to 0}0$ and $\operatorname{diam} (\Q) \leq \mathfrak b r$, where $\mathfrak b$ is as in \eqref{eq:cube-ball}. Since $\leb^{2n+1}(\Q)=r^{2n+2}$ we can write 
\[
r^{2n+2}|\nabla_\H u(p)| \geq \int_{\Q}|\nabla_\H u|d\leb^{2n+1}-\omega_\delta( \mathfrak br)r^{2n+2}
\]
which, combined with \eqref{eq_est1}, gives
\[
r^{2n+1}\fint_{\Q} |u-(u)_{\Q}|  \geq \left( \frac{1}{4}-\frac{\sqrt{2n}}{2}\rho \right) \int_{\Q}|\nabla_\H u|d\leb^{2n+1}-\left(   \left( \frac{1}{4}-\frac{\sqrt{2n}}{2}\rho \right)  \omega_\delta(\mathfrak br)+2C_\delta r                          \right)r^{2n+2}.
\]
By summing on all the Heisenberg cubes of $\mathcal A_r$ one finds 
\begin{align}\label{eq_est2}
r^{2n+1}\sum_{\Q \in \mathcal A_r}\fint_{\Q} |u-(u)_{\Q}|  \geq& \left( \frac{1}{4}-\frac{\sqrt{2n}}{2}\rho \right)  \int_{\bigcup_{\Q \in \mathcal{A}_{r}} \Q}|\nabla_\H u|d\leb^{2n+1}\\& \notag -\left(       \left( \frac{1}{4}-\frac{\sqrt{2n}}{2}\rho \right)  \omega_\delta( \mathfrak b r)+2C_\delta r                          \right)r^{2n+2}\#\mathcal A_r.
\end{align}
Since the Heisenberg cubes in  $\mathcal A_r$ are disjoint, $r^{2n+2}\# \mathcal A_r  \leq \leb^{2n+1}(W_\tau)$. By Lemma \ref{lem:pavage} one has 
\[
\lim_{r\to0}\leb^{2n+1}\left( A_j \setminus \bigcup_{\Q \in \mathcal A_{r,j}}\Q\right)=0.
\]
The horizontal gradient $|\nabla_\H u|$ is bounded on $\overline{V_\delta}$, hence taking the $\liminf$ as $r \to 0$ of \eqref{eq_est2} and recalling \eqref{eq_est3} 
\[
\liminf_{r \to 0} r^{2n+1}\sum_{\Q \in \mathcal A_r}\fint_{\Q} |u-(u)_{\Q}|  \geq \left( \frac{1}{4}-\frac{\sqrt{2n}}{2}\rho \right)  \int_{W_\tau}|\nabla_\H u|d\leb^{2n+1}.
\]
Eventually, by definition of $W_\tau$
\[
\int_{W_\tau}|\nabla_\H u|d\leb^{2n+1} \geq \int_{V_\delta}|\nabla_\H u|d\leb^{2n+1} -\tau \leb^{2n+1}(V),
\]
which is enough to conclude the proof.
\end{proof}

\begin{lem}\label{lem_fr}
    Let $\Omega \se \H^n$ be an open set and $u$ be a good $\sbv_\H(\Omega)$ function; for fixed $\eta>0$ and $0<\sigma<1$ let $K$, $r_0$ and $(\mathcal F_r)_{r\in(0,r_0)}$ be as in Lemma \ref{lem_comb}. Then
    \begin{equation}\label{eq_j1}
    \liminf_{r \to 0} r^{2n+1}\sum_{\Q \in \mathcal F_r} \fint_{\Q} |u-(u)_{\Q}|d\leb^{2n+1} \geq \frac{1}{2}(1-\sigma)(|D_\H^ju|(\J_u)-\eta)
    \end{equation}
    and
    \begin{equation}\label{eq_j2}
        \bigcup_{\Q \in \mathcal F_r}\Q \subset \{ x \in \H^n: d(x,K) \leq \mathfrak br\},
    \end{equation}
    where   $\mathfrak b>0$ is as in \eqref{eq:cube-ball}.
\end{lem}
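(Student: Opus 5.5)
The inclusion \eqref{eq_j2} follows from the construction in Lemma~\ref{lem_comb}$(viii)$: every $\Q\in\mathcal F_r$ is $\C_{r,U_i}(p_{i,h}^r)$ with $p_{i,h}^r\in K_i\subset K$, and by \eqref{eq:cube-ball} it lies in $B(p_{i,h}^r,\tfrac{\mathfrak b}{2}r)$, hence within distance $\mathfrak br$ of $K$.

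For \eqref{eq_j1} I will estimate each cube from below and then sum using $(viii)(b)$.

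\emph{Single-cube estimate.} Fix $\Q=\C_{r,U_i}(p)$ with $p=p_{i,h}^r\in K_i$. Split $\Q$ into $\C^\pm_{r,U_i}(p)$ from Lemma~\ref{lem_comb}$(iv)$. The function $J\ceq u^+(p)\chi_{\C^+_{r,U_i}(p)}+u^-(p)\chi_{\C^-_{r,U_i}(p)}$ satisfies
\[
\fint_\Q|u-J|\leq \fint_{\C^+_{r,U_i}(p)}|u-u^+(p)|+\fint_{\C^-_{r,U_i}(p)}|u-u^-(p)|\leq 2\alpha(r),
\]
because each half-cube has measure comparable to that of $\Q$. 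More precisely, the half-cubes are images of $\{(z,t)\in\C:\langle U_i^*\nu_u(p),z\rangle\gtrless0\}$, and since $|U_i^*\nu_u(p)-e_1|<\sigma<1$ by $(iii)$, their measures stay bounded below uniformly. Hence
\[
\fint_\Q|u-(u)_\Q|\geq\fint_\Q|J-(J)_\Q|-4\alpha(r).
\]
After pulling back by $T_{p,r,U_i}$, the quantity $\fint_\Q|J-(J)_\Q|$ equals $\int_\C|J_{a,b,\mu,0}-(J_{a,b,\mu,0})_\C|$ with $a=u^+(p)$, $b=u^-(p)$ and $\mu=U_i^*\nu_u(p)$. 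By \eqref{eq_auxaux} this is $2\lambda(1-\lambda)|a-b|$ with $\lambda=\leb^{2n+1}(E_{\mu,0})$. By the symmetry $z\mapsto-z$ of $\C_H$ we have $\lambda=1/2$, so the quantity is exactly $\tfrac12|u^+(p)-u^-(p)|$; this is also \eqref{eq_eqaux} with $\tau=0$. This step is the heart of the argument, and the symmetry of the cube makes it exact for any $\mu$.

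\emph{Summation.} Multiplying by $r^{2n+1}$ and summing over $\mathcal F_r$ gives
\[
r^{2n+1}\sum_{\Q\in\mathcal F_r}\fint_\Q|u-(u)_\Q|\geq\frac12\sum_{i=1}^m\sum_{h=1}^{N_i(r)}|u^+(p_{i,h}^r)-u^-(p_{i,h}^r)|r^{2n+1}-4\alpha(r)\sum_i N_i(r)r^{2n+1}.
\]
By $(viii)(a)$ and $(iv)$ the error term tends to $0$. By $(viii)(b)$ the main term is at least $\tfrac12(1-\sigma)\sum_i|D^j_\H u|(K_i)=\tfrac12(1-\sigma)|D^j_\H u|(K)$, using that the $K_i$ are disjoint. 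Finally $(ii)$ gives $|D^j_\H u|(K)\geq|D^j_\H u|(\J_u)-\eta$, which yields \eqref{eq_j1}.

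The only delicate point is the uniform lower bound on the measure of the half-cubes, needed to pass from the averages in $(iv)$ to averages over $\Q$. Even without $(iii)$, it holds for all $\mu$: each half of $\C_H$ cut by a hyperplane through the origin has measure exactly $1/2$ by central symmetry, so every half-cube has measure $\tfrac12 r^{2n+2}$.
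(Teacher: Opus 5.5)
Your proof is correct and follows essentially the same route as the paper. You split each cube $\C_{r,U_i}(p^r_{i,h})$ into its two equal-measure halves and use (iv) to get a per-cube bound $\frac12|u^+(p^r_{i,h})-u^-(p^r_{i,h})|-C\alpha(r)$, then sum using (viii)(a),(b) and (ii). The only difference is cosmetic: the paper gets the per-cube bound directly from Jensen's inequality on the half-cube averages $m^\pm$, with error $\alpha(r)$. You instead compare $u$ with the model jump $J$ and invoke \eqref{eq_auxaux}, which gives the equally harmless error $4\alpha(r)$.
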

\begin{proof}
Fix $r\in(0,r_0)$ and let $U_i,\ p^r_{i,h}$ and $\C_{i,h}^{r}=\C_{r,U_i}(p_{i,h}^r)$ be as in Lemma \ref{lem_comb}; call $\C_{i,h}^{r,\pm}\ceq \C^\pm_{r,U_i}(p^r_{i,h})$  the two halves of $\C_{i,h}^{r}$, as per Lemma \ref{lem_comb} $(iv)$. Consider
    \[
    m_{i,h}^{r,\pm}\ceq \fint_{\C_{i,h}^{r,\pm}}u d\leb^{2n+1},
    \qquad\text{so that}\qquad
    (u)_{\C_{i,h}^r}=\frac{ m_{i,h}^{r,+}+ m_{i,h}^{r,-}}{2}.
    \]
    Then
    \begin{equation}\label{eq_v1}
    \fint_{\C_{i,h}^r}\left|  u-(u)_{\C_{i,h}^r}     \right|d\leb^{2n+1} \geq \frac{1}{2}|m_{i,h}^{r,+}-(u)_{\C_{i,h}^r}   |  +  \frac{1}{2}|m_{i,h}^{r,-}-(u)_{\C_{i,h}^r}   |=\frac{1}{2}|m_{i,h}^{r,+}- m_{i,h}^{r,-}|
        \end{equation}
    and, if $\alpha(r)$ is as in Lemma \ref{lem_comb} $(iv)$, one has for each $i,h$
    \begin{equation}\label{eq_v2}
    | m_{i,h}^{r,+}-u^+(p^r_{i,h})|+ | m_{i,h}^{r,-}-u^-(p^r_{i,h})|\leq 2\alpha(r).
        \end{equation}
     Combining \eqref{eq_v1} and \eqref{eq_v2} we obtain
\[
\fint_{\C_{i,h}^r}  \left|  u-(u)_{\C_{i,h}^r}     \right|d\leb^{2n+1} \geq \frac{1}{2}  |u^+(p_{i,h}^r)-u^-(p_{i,h}^r)|-\alpha(r) .
\]
Multiplying by $r^{2n+1}$ the above inequality,  summing on all $h=1,\dots, N_i(r)$ and  using  Lemma \ref{lem_comb} $(viii)$ we obtain
\[
 r^{2n+1}\sum_{h=1}^{N_i(r)} \fint_{\C_{i,h}^r}  \left|  u-(u)_{\C_{i,h}^r}     \right|d\leb^{2n+1} \geq \frac{1}{2} (1-\sigma)|D_{\mathbb H}^j u|(K_i)-\alpha(r)r^{2n+1}N_i(r).
\]
Summing on  $i=1,\dots,m$ and  using again Lemma \ref{lem_comb} $(viii)$ we obtain
\begin{align*}
r^{2n+1}\sum_{\Q \in \mathcal F_r} \fint_{\Q}   \left|  u-(u)_{\Q}     \right|d\leb^{2n+1} &\geq \frac{1}{2} (1-\sigma)|D_{\mathbb H}^j u|(K)-\alpha(r)\sum_{i=1}^mr^{2n+1}{N_i(r)}\\
&\geq\frac{1}{2} (1-\sigma) (|D_\H^j u|(\J_u)-\eta)-\alpha(r)\sum_{i=1}^m r^{2n+1} N_i(r)
\end{align*}
By taking the $\liminf$ as $r \to 0$ and recalling that $\alpha(r) \xrightarrow{r \to 0}0$ and $\sup_{0<r<r_0}N_i(r)r^{2n+1}<+\infty$ (as per Lemma \ref{lem_comb} $(viii)(a)$) we obtain \eqref{eq_j1}. The inclusion \eqref{eq_j2} follows from \eqref{eq:cube-ball} and the fact that $p_{i,h}^r \in K$.
\end{proof}

\begin{teo}\label{teo_lower}
    Let $\Omega \se \Hn$ be an open set and $u$ be a good $\sbv_\H(\Omega)$ function. Then
    \[
    \liminf_{r \to 0}k_r(u) \geq \frac{1}{4}\int_\Omega |\nabla_\H u|d\leb^{2n+1}+\frac{1}{2}|D^j_\H u|(\J_u).
    \]
\end{teo}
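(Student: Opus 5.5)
The plan is to build, for each small $r$, one admissible family that joins the cubes of Lemma~\ref{lem_ar} (which capture the absolutely continuous part) with those of Lemma~\ref{lem_fr} (which capture the jump part). Both families must lie in $\Omega$ and be disjoint from each other.

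\emph{Step 1: parameters.} Fix $\eta>0$ and $0<\sigma<1$. Lemma~\ref{lem_comb} gives $K$, $r_0$ and the families $\mathcal F_r$, all compactly contained in $\Omega$. Fix next a bounded open set $V\subset\subset\Omega$, together with $\delta,\tau>0$ and $0<\rho<\frac1{2\sqrt{2n}}$. Lemma~\ref{lem_ar} then gives families $\mathcal A_r$ of cubes contained in $V_\delta$. Since $K\subset\J_u$, every point of $V_\delta$ is at distance greater than $\delta$ from $K$. By \eqref{eq_j2}, every cube of $\mathcal F_r$ lies in $\{d(\cdot,K)\le\mathfrak b r\}$. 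Hence, once $\mathfrak b r<\delta$, the cubes of $\mathcal F_r$ and those of $\mathcal A_r$ are disjoint. The union $\mathcal G_r\ceq\mathcal A_r\cup\mathcal F_r$ is therefore a pairwise disjoint family of Heisenberg $r$-cubes contained in $\Omega$, and so it is admissible in the definition of $k_r(u)$.

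\emph{Step 2: sum the two lower bounds.} From $k_r(u)\ge r^{2n+1}\sum_{\Q\in\mathcal G_r}\fint_\Q|u-(u)_\Q|$ and superadditivity of $\liminf$, \eqref{eq_claimar} and \eqref{eq_j1} give
\[
\liminf_{r\to0}k_r(u)\ge\Big(\tfrac14-\tfrac{\sqrt{2n}}2\rho\Big)\Big(\int_{V_\delta}|\nabla_\H u|\,d\leb^{2n+1}-\tau\leb^{2n+1}(V)\Big)+\tfrac12(1-\sigma)\big(|D^j_\H u|(\J_u)-\eta\big).
\]

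\emph{Step 3: remove the parameters.} First let $\tau\to0$ and $\rho\to0$. Next let $\delta\to0$: the sets $V_\delta$ increase to $V\setminus\J_u$, and $\leb^{2n+1}(\J_u)=0$, so monotone convergence gives $\int_{V_\delta}|\nabla_\H u|\to\int_V|\nabla_\H u|$. Then let $V\uparrow\Omega$ along an exhaustion by bounded open sets compactly contained in $\Omega$. The estimate of Step 2 does not depend on $V$, because $\mathcal F_r$ and its bound do not involve $V$. This step still works if $\int_\Omega|\nabla_\H u|=+\infty$, since the right-hand side then tends to $+\infty$. Finally let $\eta,\sigma\to0$. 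On $\Omega\setminus\J_u$ we have $\nabla_\H u=D^\ap_\H u$ almost everywhere, so this is the claimed bound.

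The only step that needs care is the disjointness in Step 1. It depends on two facts: the cubes of $\mathcal A_r$ stay at distance more than $\delta$ from $\J_u\supset K$, which is guaranteed by the definition of $V_\delta$; and the cubes of $\mathcal F_r$ shrink onto $K$ at rate $\mathfrak b r$, which is \eqref{eq_j2}. Everything else is a routine ordering of limits.
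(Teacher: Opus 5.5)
Your proposal is correct and follows essentially the same route as the paper. You take the admissible family $\mathcal A_r\cup\mathcal F_r$ from Lemmata~\ref{lem_ar} and~\ref{lem_fr}, which is disjoint because the cubes of $\mathcal A_r$ stay at distance $>\delta$ from $\J_u$ while those of $\mathcal F_r$ lie within $\mathfrak b r$ of $K$, and you then remove the parameters by monotone convergence along an exhaustion of $\Omega$. The paper differs only cosmetically: it applies Lemma~\ref{lem_fr} on the exhausting set $U_m$ and sends $\tau,\rho,\sigma,\eta\to0$ before letting $\delta\to0$ and $m\to\infty$.
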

\begin{proof}
    For $m \in \N$ define 
    \[
    U_m \ceq \{ x\in \Omega : d(0,x)<m,\ d(x, \H^n \setminus \Omega)>1/m\}.
    \]
    We have that $U_m$ is a bounded open subset of $\Omega$ such that $U_m \subset \subset \Omega$, the sequence $(U_m)_{m \in\N}$ is increasing and $U_m \xrightarrow{m \to +\infty}\Omega$. We can assume that $m$ is large enough so that $\J_u\cap U_m=\J_u$. 
    Fix   $\delta,\tau,\eta>0$, $0<\rho<\frac{1}{2\sqrt{2n}}$ and $0<\sigma<1$. 
    For every  sufficiently small $r>0$  let $\mathcal A_r$ be the family from Lemma \ref{lem_ar} (when applied with $V=U_m$) and $\mathcal F_r$ be the family from Lemma \ref{lem_fr} (applied with $\Omega=U_m$). We have that, whenever $\mathfrak br<\delta/2$ (where $\mathfrak b$ is as in the statement of Lemma \ref{lem_fr})
    \[
    \bigsqcup_{\Q \in \mathcal A_r}\Q \subset \{ x \in \H^n: d(x,\J_u)>\delta\}, \qquad     \bigsqcup_{\Q \in \mathcal F_r}\Q \subset \{ x \in \H^n: d(x,\J_u)<\delta/2\},
    \]
    hence the family of Heisenberg cubes $\mathcal G_r \ceq \mathcal A_r \cup \mathcal F_r$ is admissible for the functional $k_r$. Therefore 
    \[
    k_r(u) \geq r^{2n+1}\sum_{\Q \in \mathcal A_r}\fint_{\Q} |u-(u)_{\Q}|d\leb^{2n+1}+r^{2n+1}\sum_{\Q \in \mathcal F_r}\fint_{\Q} |u-(u)_{\Q}|d\leb^{2n+1}
    \]
and, by Lemmata \ref{lem_ar} and \ref{lem_fr} and the fact that   $\J_u\cap U_m=\J_u$,
    \begin{align*}
     \liminf_{r \to 0}k_r(u) &\geq     \left( \frac{1}{4}-\frac{\sqrt{2n}}{2}\rho \right) \left(  \int_{(U_m)_\delta}|\nabla_\H u|d\leb^{2n+1}-\tau \leb^{2n+1}(U_m)   \right)\\
     &\qquad+\frac{1}{2}(1-\sigma)(|D^j_\H u|(\J_u)-\eta).
    \end{align*}
    By letting  $\tau \to 0$, $\rho \to 0$, $\sigma \to 0$ and  $\eta \to 0$ we obtain from the above inequality  
        \begin{equation}\label{eq_est3bis}
     \liminf_{r \to 0}k_r(u) \geq     \frac{1}{4}   \int_{(U_m)_\delta}|\nabla_\H u|d\leb^{2n+1}   +\frac{1}{2}|D^j_\H u|(\J_u).
        \end{equation}
For fixed $m$ one has $(U_m)_\delta\uparrow U_m \setminus \J_u$ as $\delta \to 0$. Since $\J_u$ is closed and with zero Lebesgue measure, by monotone convergence we obtain
\[
\sup_{m , \delta} \int_{(U_m)_\delta}|\nabla_\H u|d\leb^{2n+1}=\int_\Omega |\nabla_\H u|d\leb^{2n+1},
\]
and the statement follows by taking the supremum in \eqref{eq_est3bis}.
\end{proof}

\bibliographystyle{acm}
\bibliography{BMObib}

\end{document}